\def\today{\ifcase \month \or
  January \or February \or March \or April \or
  May \or June \or July \or August \or
  September \or October \or November \or December \fi
  \space\number\day , \number\year}
  \newcommand\@dotsep{4.5}
  \def\@tocline#1#2#3#4#5#6#7{\relax
    \ifnum #1>\c@tocdepth % then omit
    \else
    \par \addpenalty\@secpenalty\addvspace{#2}%
    \begingroup \hyphenpenalty\@M
    \@ifempty{#4}{%
    \@tempdima\csname r@tocindent\number#1\endcsname\relax
        }{%
        \@tempdima#4\relax
          }%
      \parindent\z@ \leftskip#3\relax \advance\leftskip\@tempdima\relax
      \rightskip\@pnumwidth plus1em \parfillskip-\@pnumwidth
      #5\leavevmode\hskip-\@tempdima #6\relax
      \leaders\hbox{$\m@th
      \mkern \@dotsep mu\hbox{.}\mkern \@dotsep mu$}\hfill
      \hbox to\@pnumwidth{\@tocpagenum{#7}}\par
      \nobreak
        \endgroup
        \fi}
\begin{document}

\makeatletter
\@addtoreset{figure}{section}
\def\thefigure{\thesection.\@arabic\c@figure}
\def\fps@figure{h,t}
\@addtoreset{table}{bsection}

\def\thetable{\thesection.\@arabic\c@table}
\def\fps@table{h, t}
\@addtoreset{equation}{section}
\def\theequation{%\thesection.
\arabic{equation}}
\makeatother

\newcommand{\bfi}{\bfseries\itshape}

\newtheorem{theorem}{Theorem}
\newtheorem{acknowledgment}[theorem]{Acknowledgment}
\newtheorem{algorithm}[theorem]{Algorithm}
\newtheorem{axiom}[theorem]{Axiom}
\newtheorem{case}[theorem]{Case}
\newtheorem{claim}[theorem]{Claim}
\newtheorem{conclusion}[theorem]{Conclusion}
\newtheorem{condition}[theorem]{Condition}
\newtheorem{conjecture}[theorem]{Conjecture}
\newtheorem{construction}[theorem]{Construction}
\newtheorem{corollary}[theorem]{Corollary}
\newtheorem{criterion}[theorem]{Criterion}
\newtheorem{data}[theorem]{Data}
\newtheorem{definition}[theorem]{Definition}
\newtheorem{example}[theorem]{Example}
\newtheorem{lemma}[theorem]{Lemma}
\newtheorem{notation}[theorem]{Notation}
\newtheorem{problem}[theorem]{Problem}
\newtheorem{proposition}[theorem]{Proposition}
\newtheorem{question}[theorem]{Question}
\newtheorem{remark}[theorem]{Remark}
\newtheorem{setting}[theorem]{Setting}
\numberwithin{theorem}{section}
\numberwithin{equation}{section}

%%% Todo
\newcommand{\todo}[1]{\vspace{5 mm}\par \noindent
\framebox{\begin{minipage}[c]{0.85 \textwidth}
\tt #1 \end{minipage}}\vspace{5 mm}\par}
%%%

\renewcommand{\1}{{\bf 1}}

\newcommand{\hotimes}{\widehat\otimes}

\newcommand{\Ad}{{\rm Ad}}
\newcommand{\Alt}{{\rm Alt}\,}
\newcommand{\Ci}{{\mathcal C}^\infty}
\newcommand{\comp}{\circ}
\newcommand{\wt}{\widetilde}

\newcommand{\C}{\text{\bf C}}
\newcommand{\D}{\text{\bf D}}
\newcommand{\Hb}{\text{\bf H}}
\newcommand{\N}{\text{\bf N}}
\newcommand{\R}{\text{\bf R}}
\newcommand{\T}{\text{\bf T}}
\newcommand{\Ub}{\text{\bf U}}

\newcommand{\ph}{\text{\bf P}}
\newcommand{\Cartan}{{\rm Cartan}}
\newcommand{\de}{{\rm d}}
\newcommand{\Decomp}{{\rm Decomp}}
\newcommand{\diag}{{\rm diag}}
\newcommand{\ev}{{\rm ev}}
\newcommand{\fimes}{\mathop{\times}\limits}
\newcommand{\fin}{{\rm fin}}
\newcommand{\id}{{\rm id}}
\newcommand{\ie}{{\rm i}}
\newcommand{\End}{{\rm End}\,}
\newcommand{\Gr}{{\rm Gr}}
\newcommand{\GL}{{\rm GL}}
\newcommand{\Hilb}{{\bf Hilb}\,}
\newcommand{\Hom}{{\rm Hom}}
\renewcommand{\Im}{{\rm Im}}
\newcommand{\Ker}{{\rm Ker}\,}
\newcommand{\Lie}{\textbf{L}}
\newcommand{\lf}{{\rm l}}
\newcommand{\pr}{{\rm pr}}
\newcommand{\Ran}{{\rm Ran}\,}
\newcommand{\rank}{{\rm rank}\,}
\newcommand{\sa}{{\rm sa}}
\newcommand{\spann}{{\rm span}}
\newcommand{\spec}{{\rm spec}}

\newcommand{\Tr}{{\rm Tr}\,}
\newcommand{\Tran}{\textbf{Trans}}

\newcommand{\CC}{{\mathbb C}}
\newcommand{\NN}{{\mathbb N}}
\newcommand{\RR}{{\mathbb R}}
\renewcommand{\SS}{{\mathbb S}}
\newcommand{\TT}{{\mathbb T}}

\newcommand{\bb}{b}%{{\mathpzc b}}
\newcommand{\vv}{v}%{{\mathpzc v}}
\newcommand{\ww}{w}%{{\mathpzc w}}

\newcommand{\G}{{\rm G}}
\newcommand{\U}{{\rm U}}
\newcommand{\Gl}{{\rm GL}}
\newcommand{\SL}{{\rm SL}}
\newcommand{\SU}{{\rm SU}}
\newcommand{\VB}{{\rm VB}}

\newcommand{\Ac}{{\mathcal A}}
\newcommand{\Bc}{{\mathcal B}}
\newcommand{\Cc}{{\mathcal C}}
\newcommand{\Dc}{{\mathcal D}}
\newcommand{\Ec}{{\mathcal E}}
\newcommand{\Fc}{{\mathcal F}}
\newcommand{\Gc}{{\mathcal G}}
\newcommand{\Hc}{{\mathcal H}}
\newcommand{\Ic}{{\mathcal I}}
\newcommand{\Jc}{{\mathcal J}}
\newcommand{\Kc}{{\mathcal K}}
\newcommand{\Lc}{{\mathcal L}}
\renewcommand{\Mc}{{\mathcal M}}
\newcommand{\Nc}{{\mathcal N}}
\newcommand{\Oc}{{\mathcal O}}
\newcommand{\Pc}{{\mathcal P}}
\newcommand{\Qc}{{\mathcal Q}}
\newcommand{\Rc}{{\mathcal R}}
\newcommand{\Sc}{{\mathcal S}}
\newcommand{\Tc}{{\mathcal T}}
\newcommand{\Uc}{{\mathcal U}}
\newcommand{\Vc}{{\mathcal V}}
\newcommand{\Xc}{{\mathcal X}}
\newcommand{\Yc}{{\mathcal Y}}
\newcommand{\Zc}{{\mathcal Z}}
\newcommand{\Ag}{{\mathfrak A}}
\newcommand{\Bg}{{\mathfrak B}}
\renewcommand{\gg}{{\mathfrak g}}
\newcommand{\hg}{{\mathfrak h}}
\newcommand{\mg}{{\mathfrak m}}
\newcommand{\ug}{{\mathfrak u}}
\newcommand{\nng}{{\mathfrak n}}
\newcommand{\pg}{{\mathfrak p}}
\newcommand{\Gg}{{\mathfrak G}}
\newcommand{\Hg}{{\mathfrak H}}
\newcommand{\Ig}{{\mathfrak I}}
\newcommand{\Jg}{{\mathfrak J}}
\newcommand{\Lg}{{\mathfrak L}}
\newcommand{\Sg}{{\mathfrak S}}
\newcommand{\Ug}{{\mathfrak u}}

\markboth{}{}

\makeatletter
\title[Cartan subalgebras of operator ideals]{Cartan subalgebras of operator ideals}
\author[Daniel Belti\c t\u a]{Daniel Belti\c t\u a$^*$}
\thanks{$^*$%The research of Daniel Belti\c t\u a was
Partially supported by the Grant
of the Romanian National Authority for Scientific Research, CNCS-UEFISCDI,
project number PN-II-ID-PCE-2011-3-0131.
%and by Project MTM2010-16679, DGI-FEDER, of the MCYT, Spain
}
\author[Sasmita Patnaik]{Sasmita Patnaik$^{**}$}
\thanks{$^{**}$Partially supported by the Graduate Dissertation Fellowship
from the Charles Phelps Taft Research Center.}
\author[Gary Weiss]{Gary Weiss$^{***}$}
\thanks{$^{***}$%The research of Gary Weiss was
Partially supported by
Simons Foundation Collaboration Grant for Mathematicians \#245014 and several Taft Research Center travel grants.}
%\address{Institute of Mathematics ``Simion
%Stoilow'' of the Romanian Academy,
%P.O. Box 1-764, Bucharest, Romania}
%\email{Daniel.Beltita@imar.ro, beltita@gmail.com}
%\address{University of Cincinnati, Department of Mathematics, Cincinnati, OH, 45221-0025, USA}
%\email{sasmita\_19@yahoo.co.in}
%\address{University of Cincinnati, Department of Mathematics, Cincinnati, OH, 45221-0025, USA}
%\email{gary.weiss@math.uc.edu}
%\thanks{}
%\dedicatory{\textit{File name}: BPW\_3rd\_arxiv.tex}
\date{August 21, 2014}%{\today}

\keywords{operator ideal, Cartan subalgebra, infinite-dimensional linear algebraic group}
\subjclass[2000]{Primary 22E65; Secondary 47B10, 47L20, 14L35, 20G20, 47-02}

\makeatother

\begin{abstract}
Denote by $U_{\Ic}(\Hc)$ the group of all unitary operators in $\1+\Ic$  
where $\Hc$ is a separable infinite-dimensional complex Hilbert space and  
$\Ic$ is any two-sided ideal of $\Bc(\Hc)$. 
A Cartan subalgebra $\Cc$ of $\Ic$ 
is defined in this paper as a maximal abelian self-adjoint subalgebra of~$\Ic$  
and its conjugacy class is defined herein as the set of Cartan subalgebras $\{V\Cc V^*\mid V\in U_{\Ic}(\Hc)\}$.
For nonzero proper ideals $\Ic$ we construct an uncountable family
of Cartan subalgebras of $\Ic$ with distinct conjugacy classes. 
This is in contrast
to the by now classical observation of P. de La Harpe 
who noted 
that 
when $\Ic$ is any of the Schatten ideals,  
there is precisely
one conjugacy class under the action of the full group of unitary operators on~$\Hc$. 
Our perspective is that the action of the full unitary group on Cartan subalgebras of $\Ic$ is transitive, 
while by shrinking to $U_{\Ic}(\Hc)$ we obtain an action with uncountably many orbits 
if $\{0\}\ne\Ic\ne\Bc(\Hc)$. 

In the case when $\Ic$ is a symmetrically normed ideal and
 is the dual of some Banach space,
we show how the conjugacy classes of the Cartan subalgebras of $\Ic$ become 
smooth manifolds modeled on suitable Banach spaces.  
These manifolds are endowed with groups of smooth transformations given by the action of the group $U_{\Ic}(\Hc)$ 
on the orbits,  
and are equivariantly diffeomorphic to each other.  
We then find that there exists a unique diffeomorphism class of full flag manifolds of $U_{\Ic}(\Hc)$ 
and we give its construction.  
This resembles the case of compact Lie groups when one has a unique full flag manifold, 
since all the Cartan subalgebras are conjugated to each other.
\end{abstract}

\maketitle

\tableofcontents

\section{Introduction}

The correspondence between Lie groups and Lie algebras has been extended far beyond 
the classical setting of finite-dimensional Lie groups   
(see for instance \cite{Ne06}).  
Lie theory consequently impacted various areas of mathematics, 
in particular representation theory and operator algebras, 
and this interaction contributed back to further developments in Lie theory itself. 
We will next illustrate these general remarks  
by a very few other references  
in order to explain the motivation for our present investigation on Cartan subalgebras of operator ideals   
that is a sequel to our introductory survey paper \cite{BPW13b}. 

Cartan subalgebras play a central role in 
the structure theory of finite-dim\-ensio\-nal Lie algebras, 
in part by extending the role of diagonal subalgebras to the study of matrix algebras. 
Consequently there has been a continuing interest  
in finding the appropriate notion of Cartan subalgebras of various infinite-dimensional Lie algebras.  
And this has led to progress in various directions 
of which we will briefly mention only three that are more relevant for this paper: 
\begin{itemize}
\item Direct limit groups: representation theory 
based on root-space decompositions associated to Cartan subalgebras 
\cite{NRW01}, \cite{DPW02}, \cite{Wo05}.  
\item Classical Banach-Lie groups associated with the Schatten classes: 
structure and representation theory 
\cite{dlH72}, \cite{Bo80}, \cite{Ne98}, \cite{Ne02}, \cite{Ne04}, \cite{AV07}, \cite{ALR10}, \cite{CD13}. 
\item Maximal abelian self-adjoint subalgebras of von Neumann algebras and of $C^*$-algebras 
\cite{Re08}, \cite{SS08}. 
\end{itemize} 
The relevance of the first direction is obvious from the fact that many direct limit groups 
are closely related to the classical groups associated with the ideal of finite-rank operators. 
The relevance of the second direction is seen for instance in our comments 
following Question~\ref{first_quest} below 
(which first appears in \cite{BPW13b}), 
where we raised the problem of studying the conjugacy classes 
of Cartan subalgebras of infinite-dimensional classical groups. 

There is a huge literature devoted to the third of these research directions, 
hence we merely cite here a nice survey and a book but we will not discuss these citations further. 
We next briefly mention a few facts from this deep and very active theory, 
since in the present paper we will try to find analogs for some of these facts  
in the theory of operator ideals. 

\textit{Cartan subalgebras of proper operator ideals} of $\Bc(\Hc)$ as defined here 
are simply the maximal abelian self-adjoint subalgebras of these ideals. 
This is a topology-free notion since some operator ideals may not have any reasonable complete linear topology. 

However, an additional topological regularity property is required for defining 
the Cartan subalgebras of $\Bc(\Hc)$ 
and more generally for Cartan subalgebras of any $C^*$-algebra or of any von Neumann algebra 
(see \cite[Def. 2.1, 5.1]{Re08}). 
Let $\Mc$ be any unital $*$-subalgebra of $\Bc(\Hc)$ with 
its unitary group $U_{\Mc}:=\{u\in\Mc\mid uu^*=u^*u=\1\}$. 
For any maximal abelian self-adjoint subalgebra (masa) $\Cc\subseteq\Mc$, 
its \textit{normalizer} is a subgroup of $U_{\Mc}$ that is commonly defined 
as 
$$U_{\Mc,\Cc}:=\{u\in U_{\Mc}\mid u\Cc u^*=\Cc\}.$$ 
Observe that $U_{\Mc,\Cc}\supseteq U_{\Mc}\cap\Cc$. 
If $U_{\Mc,\Cc}\subset\Cc$, then $U_{\Mc,\Cc}=U_{\Mc}\cap\Cc$, 
and we say that $\Cc$ is \emph{singular} since its normalizer is minimum possible. 
To describe the opposite situation, which should be thought of as a regularity property of $\Cc$ 
(see \eqref{reg} below), 
we single out two particularly important cases: 
\begin{enumerate}[(i)]
\item\label{tau_1} If $\Mc$ is closed in the operator norm topology, 
that is, $\Mc$ is a concrete $C^*$-algebra, 
then $\Cc$ is also closed in the operator norm topology by its maximality property. 
\item\label{tau_2} Similarly, $\Mc$ is closed in the weak operator topology, 
that is, $\Mc$ is a von Neumann algebra,  
then $\Cc$ is weakly closed again by its maximality property. 
\end{enumerate}
Let us denote by $\tau$ the operator norm topology of $\Mc$ in~\eqref{tau_1} 
and the weak operator topology in~\eqref{tau_2}. 
One calls the masa $\Cc$ a \textit{Cartan subalgebra} of $\Mc$ if 
there exists a faithful conditional expectation $E\colon\Mc\to\Cc$, 
(that is, $E$ is $\tau$-continuous, $E^2=E$, $\Vert E\Vert=1$, $E(\Mc)=\Cc$  
and if $0\le x\in\Ker E$ then $x=0$) and $\Cc$ 
has the following \emph{regularity property}: 
\begin{equation}\label{reg}
\text{The normalizer $U_{\Mc,\Cc}$ spans a $\tau$-dense linear subspace of $\Mc$.} 
\end{equation}
(See \cite[Defs. 2.1, 5.1]{Re08} and note that $\1\in\Cc$ by the maximality property of $\Cc$.)

If $\Mc=M_n$ is the algebra of complex $n\times n$ matrices, then every masa of $M_n$ is a Cartan subalgebra 
in the above sense.  
And in particular this is the case for its canonical diagonal subalgebra $D_n\subset M_n$, 
since for instance $U_{M_n,D_n}$ contains all the permutation matrices.  
However, in the case of the infinite-dimensional von Neumann algebras, 
it was discovered very early \cite{Di54} 
that there may exist singular masa's, 
and such singular masa's were actually found in every separable II$_1$ factor \cite{Po83}.  

As regards these Cartan subalgebras, 
 their existence has deep implications in the structure of von Neumann algebras \cite{FM77}. 
However, there exist II$_1$ factors that do not have Cartan subalgebras at all, 
for instance the von Neumann algebras of the free groups with finitely many generators \cite{Vo96}.  
On the other hand, there exist II$_1$ factors, like the finite hyperfinite factor, 
for which there exist uncountably many conjugacy classes of Cartan subalgebras \cite{FM77}, \cite{Par85}, 
where we mean conjugacy by transformations $x\mapsto uxu^*$ defined by unitaries~$u$. 
Also II$_1$ factors have been exhibited for which all the Cartan subalgebras are conjugated 
to each other~\cite{OP10}.

\subsection*{Description of the present paper}
We investigate here conjugacy properties of Cartan subalgebras 
of 
operator ideals of $\Bc(\Hc)$ on separable Hilbert space~$\Hc$. 
(All ideals herein are considered two-sided.) 
If we consider the full unitary group $U(\Hc)=\{V\in\Bc(\Hc)\mid V^*V=VV^*=\1\}$,
then to any nonzero operator ideal $\Ic\subsetneqq\Bc(\Hc)$
there corresponds 
the 
subgroup of $U(\Hc)$:
$$U_{\Ic}(\Hc):=U(\Hc)\cap(\1+\Ic)\subsetneqq U(\Hc).$$
Then, as for $U(\Hc)=U_{\Ic}(\Hc)$ when $\Ic=\Bc(\Hc)$,  
the adjoint action $\Ad(V)\Cc=V\Cc V^*$,  
for these unitaries $V\in U_{\Ic}(\Hc)$ in particular, preserves the class of Cartan subalgebras.  
That is,  
$V\Cc V^*\subset \Ic$ is again a Cartan subalgebra for every Cartan subalgebra $\Cc\subset\Ic$. 
The group $U_{\Ic}(\Hc)$ thus acts by unitary equivalence on the set of all Cartan subalgebras 
of $\Ic$. 
We prove that there exist uncountably many 
distinct 
orbits of this group action (Theorem~\ref{uncountable}).
(Recall the obvious fact that 
distinct orbits are actually disjoint.) 
This is strikingly different from the well-known situation 
with $\Ic=\Bc(\Hc)$
when one has countably many orbits  
\cite[Rem. 4.2]{BPW13b}, 
or with $\dim\Hc<\infty$ 
when the aforementioned action of group $U_{\Ic}(\Hc)$ 
on the set of all Cartan subalgebras of $\Ic$  
is transitive   
(meaning that there exists only one orbit; see Theorem~\ref{first_th} and the discussion preceding it). 
For a wide class of complete norm ideals, including the Schatten ideals $\Sg_p(\Hc)$ with $1\le p<\infty$,
for each of these orbits we exhibit in Theorem~\ref{smooth} a natural structure of a smooth Banach manifold which turns it 
into a smooth $U_{\Ic}(\Hc)$-homogeneous space 
and the group $U_{\Ic}(\Hc)$ 
is actually a Banach-Lie group.
The smooth homogeneous spaces corresponding to the various orbits turn out to be 
diffeomorphic to each other, and so 
there again exists an essentially unique full flag manifold
just as in the case of finite-dimensonal semisimple Lie groups.
We also obtain 
some results on the size and shape of the union of all Cartan subalgebras
from a fixed $U_{\Ic}(\Hc)$-orbit. 
(See Section~\ref{u-diag}, the first part.) 

By way of 
explaining the motivation for the present investigation, 
we  
recall what happens
in the finite-dimensional case 
where the only nonzero operator ideals are just full matrix algebras~$M_n(\CC)$. 
Any two maximal abelian self-adjoint subalgebras of the matrix algebra $M_n(\CC)$
are mapped to each other by the unitary equivalence 
$\Cc \mapsto V\Cc V^*$ 
for a suitably  
chosen $V\in U(n)$, as a direct consequence of the spectral theorem for normal matrices.
This observation can be thought of as a statement on the compact Lie group $G=U(n):=\{V\in M_n(\CC)\mid V^*V=\1\}$,
whose Lie algebra is $\gg=\ug(n):=\{X\in M_n(\CC)\mid X^*=-X\}$. 
The complexification of the latter algebra is $\Gg=M_n(\CC)$
viewed as a Lie algebra with the usual Lie bracket $[X,Y]=XY-YX$ for all $X,Y\in M_n(\CC)$,
and the unitary equivalence transform is the adjoint action $\Ad_G(V)X=VXV^{-1}$ whenever $V\in G$ and $X\in\Gg$.
In this context, the above statement on the compact Lie group $G=U(n)$ 
actually holds true for any compact Lie group. 
This is known as the conjugacy theorem for 
Cartan subalgebras  
and can be stated as follows:  

\begin{theorem}\label{first_th}
Let $G$ be a compact Lie group whose Lie algebra is $\gg$. 
If the complexified Lie algebra $\Gg=\gg_{\CC}:=\gg\otimes_{\RR}\CC=\gg\dotplus\ie \gg$
is endowed with the involution
given by
$(Y+\ie Z)^*=-Y+\ie Z$ for $Y,Z\in\gg$,
then any two Cartan subalgebras $\Cc_1$ and $\Cc_2$ of $\Gg$ are
{\bfi $G$-conjugated} to each other.
That is, there exists
$g\in G$ such that $\Ad_G(g)\Cc_1=\Cc_2$,
where $\Ad_G\colon G\times \Gg\to\Gg$ stands for the adjoint action of the Lie group~$G$. 
\end{theorem}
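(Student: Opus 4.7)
The plan is to reduce Theorem~\ref{first_th} to the classical conjugacy theorem for maximal tori in a compact connected Lie group. Since by definition of the involution $\gg=\{X\in\Gg\mid X^*=-X\}$, every $*$-invariant subspace $\Cc\subseteq\Gg$ decomposes as $\Cc=\hg\dotplus\ie\hg$ with $\hg:=\Cc\cap\gg$. Applied to a Cartan subalgebra of $\Gg$ --- interpreted here, consistently with the masa viewpoint of the paper, as a $*$-invariant maximal abelian subalgebra --- this yields a maximal abelian Lie subalgebra $\hg\subseteq\gg$ whose complexification recovers $\Cc$, and conversely every such $\hg$ gives rise to such a~$\Cc$.

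Next, I would identify maximal abelian subalgebras of $\gg$ with Lie algebras of maximal tori of the identity component $G_0$ of $G$. The closure in $G_0$ of the analytic subgroup generated by $\hg$ is a torus $T\subseteq G_0$ whose Lie algebra is $\hg$, and maximality of $\hg$ among abelian subalgebras of $\gg$ is equivalent to maximality of $T$ among tori of $G_0$. Cartan subalgebras of $\Gg$ thus correspond bijectively to maximal tori of $G_0$, with the $G$-conjugacy of the former translating into $G_0$-conjugacy of the latter.

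For the final step one invokes the classical conjugacy theorem for maximal tori. The standard argument picks an $\Ad$-invariant inner product $\langle\cdot,\cdot\rangle$ on $\gg$ --- which exists by averaging any inner product against the Haar measure of the compact group $G$ --- and regular elements $X_i\in\hg_i$ (those whose centralizer in $\gg$ equals $\hg_i$). Maximizing the continuous function $g\mapsto\langle\Ad(g)X_1,X_2\rangle$ over the compact manifold $G_0$ produces a critical point $g_0$ at which $[\Ad(g_0)X_1,X_2]=0$; regularity of $X_2$ then places $\Ad(g_0)X_1$ in $\hg_2$, and regularity of $X_1$ forces $\Ad(g_0)\hg_1=\hg_2$, so that complexifying gives $\Ad(g_0)\Cc_1=\Cc_2$.

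The main obstacle is this third step, where compactness of $G$ enters crucially both in producing the $\Ad$-invariant inner product and in guaranteeing the existence of the critical point. The first two steps are essentially translations of language between $*$-invariant Cartan subalgebras of the complexification and maximal tori of the identity component, and carry no genuine difficulty. That the hypotheses allow disconnected $G$ poses no further problem since $\gg$ is the Lie algebra of both $G$ and $G_0$, so $G_0$-conjugation is a fortiori $G$-conjugation.
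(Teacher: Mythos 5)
Your argument is correct. Note, however, that the paper offers no proof of its own here: it simply cites \cite[Th.~4.34]{Kn02}, so there is nothing to compare step by step. What you have written out is essentially the standard textbook proof that such a citation points to. The reduction is sound: since $\gg=\{X\in\Gg\mid X^*=-X\}$ for the given involution, any complex $*$-invariant subspace $\Cc$ satisfies $\Cc=(\Cc\cap\gg)\dotplus\ie(\Cc\cap\gg)$ (split $X$ into $\tfrac12(X-X^*)$ and $\tfrac12(X+X^*)$ and use complex linearity), and maximality passes back and forth between $\Cc$ and $\hg=\Cc\cap\gg$; the $G$-equivariance of this correspondence then reduces everything to conjugacy of maximal abelian subalgebras of $\gg$ under $\Ad(G_0)$. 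Your step through maximal tori is harmless but dispensable, since your third step already works directly at the Lie algebra level. The one ingredient you use without comment is the existence of regular elements $X_i\in\hg_i$ with $Z_{\gg}(X_i)=\hg_i$; this is standard (the non-regular elements of $\hg_i$ lie on the finitely many root hyperplanes $\ker\alpha$, $\alpha\in\Delta(\Gg,\hg_{i,\CC})$, so the regular ones are dense in $\hg_i$), but it is a genuine lemma and worth a sentence if you want the proof to be self-contained. The critical-point computation, the use of the averaged $\Ad$-invariant inner product, and the handling of disconnected $G$ via $G_0$ are all correct as stated.
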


\begin{proof}
See for instance \cite[Th. 4.34]{Kn02}.
\end{proof}

The aim of this paper is to extend this study to operator ideals %$\Ic$ 
and 
their corresponding  
smaller unitary groups  
via

\begin{question}\label{first_quest}
\normalfont
To what extent 
does 
Theorem~\ref{first_th}
hold
true when the above finite-dimensional Lie groups and Lie algebras are replaced by
$$\Gg=\Ic\text{ and } G=U_{\Ic}(\Hc):=U(\Hc)\cap(\1+\Ic)$$
where $\Hc$ is a separable infinite-dimensional complex Hilbert space,
$U(\Hc)$ is the full unitary group on~$\Hc$, and $\Ic$  is an operator ideal in~$\Bc(\Hc)$?
\end{question}
 
The point is that here we instead investigate conjugacy results involving the smaller unitary group~$U_{\Ic}(\Hc)$
rather than the full unitary group $U(\Hc)$.
We recall that the variant of the above question with $G=U(\Hc)$ was addressed in \cite[page 33]{dlH72}
for the ideal of finite-rank operators,
and in \cite[\S II.4, Props. 10 and 12]{dlH72} 
when $\Gg=\Ic$ is any of the Schatten ideals. 
That argument based on the spectral theorem actually carries over directly
to fully general proper operator ideals leading to the following 
infinite-dimensional version of Theorem~\ref{first_th}:

\begin{remark}\label{spectral}
\normalfont
For every operator ideal $\Ic\subsetneqq\Bc(\Hc)$  
and any two Cartan subalgebras $\Cc_1$ and $\Cc_2$ of $\Ic$ 
one has $V\Cc_1V^*=\Cc_2$ for some $V\in U(\Hc)$. 
(See also Remark~\ref{equivar}\eqref{equivar_item1} below for a proof of this fact.) 
\end{remark}

The answer to Question~\ref{first_quest} is obvious if $\Ic=\{0\}$ and
is also well known in the case $\Ic=\Bc(\Hc)$ 
(the relevant facts are recalled in \cite[Rem. 4.2]{BPW13b}). 
%Remark~\ref{first_rem} below.
Let us also note that problems similar 
to Question~\ref{first_quest}
could be raised in connection with the other classical groups associated with operator ideals,
and more generally about various infinite-dimensional versions of reductive Lie groups
(see \cite{dlH72}, \cite{Be11}, \cite{BPW13b}, and also \cite{KMRT98} for their finite-dimensional versions). 
That is, if $G$ is one of these groups with its corresponding Lie algebra $\gg$, 
it would be interesting to investigate the $G$-conjugacy classes of 
the Cartan subalgebras (maximal abelian self-adjoint subalgebras) of $\gg$.  
This problem makes sense since $G$ is a group of invertible operators  
and it acts on the operator Lie algebra $\gg$ 
by the group action $G\times\gg\to\gg$, $(V,X)\mapsto VXV^{-1}$.

\section{Preliminaries}\label{Prelims}

\subsection*{Operator ideals on Hilbert spaces}
Throughout this paper we denote by $\Hc$ a separable infinite-dimensional complex Hilbert space
with a fixed orthonormal basis $\bb=\{\bb_n\}_{n\ge 1}$,
and by $\Dc$ the corresponding set of diagonal operators in $\Bc(\Hc)$.
If $\lambda=\langle\lambda_n\rangle_{n=1}^\infty$ is a bounded sequence of complex numbers,
then we denote
by $\diag\,\lambda\in\Dc$ the operator whose sequence of diagonal entries is $\lambda$.

Let $\Kc(\Hc)$ denote the ideal of compact operators on $\Hc$ and 
$\Fc(\Hc)$ denote the ideal of finite-rank operators on $\Hc$.
For every $F\in\Fc(\Hc)$ we denote its rank by $\rank F:=\dim F(\Hc)$, 
the dimension of the image of~$F$.

We will use the 
notation for the full unitary group on~$\Hc$:
$$U(\Hc)=\{V\in\Bc(\Hc)\mid V^*V=VV^*=\1\}.$$
By 
\textit{operator ideal} in $\Bc(\Hc)$, or \emph{$\Bc(\Hc)$-ideal}, 
we will always mean a two-sided ideal of the algebra $\Bc(\Hc)$
of all bounded linear operators on~$\Hc$,
and we say $\Ic$ is a proper ideal if $\{0\}\subsetneqq\Ic\subsetneqq\Bc(\Hc)$. 
Recall $\Fc(\Hc) \subseteq \Ic \subseteq \Kc(\Hc)$ for every proper ideal $\Ic$ of $\Bc(\Hc)$. 
For an operator ideal $\Ic$ we denote its real part
$\Ic^{\sa}:=\{T\in\Ic\mid T=T^*\}$
and its positive cone 
$\Ic^{+}:=\{T\in\Ic\mid T\ge 0\}$.
To each ideal $\Ic$ in $\Bc(\Hc)$ there corresponds the group of unitary operators
$$U_{\Ic}(\Hc):=U(\Hc)\cap(\1+\Ic).$$
Thus, if $\Ic=\Bc(\Hc)$ then $U_{\Ic}(\Hc)=U(\Hc)$,
while for $\Ic=\Kc(\Hc)$ 
we denote $U_{\Ic}(\Hc)=:U_{\Kc(\Hc)}$. 
Likewise for $\Ic=\Sg_2(\Hc)$ we denote $U_{\Ic}(\Hc)=:U_{\Sg_2(\Hc)}$. 

We have the following spectral characterizations of the operators belonging to the unitary groups
$U(\Hc)$ and $U_{\Ic}(\Hc)$ for $\Ic\subsetneqq\Bc(\Hc)$.

\begin{proposition}\label{sas_prop}
Let $A\in\Bc(\Hc)$ %be a compact operator 
and denote by $\TT$ the unit circle in the complex plane.
Then 
\begin{enumerate}[(i)]
 \item $\1+A\in U(\Hc)$ if and only if
$A$ is a normal operator with 
its 
spectrum contained in the circle $-1+\TT$, 
\item $\1+A\in U_{\Ic}(\Hc)$ if and only if $A$ is compact normal with spectrum 
(that is, point spectrum) $\{e^{\ie \theta_k}-1\}_{k\ge 1}$ where $-\pi<\theta_k\le\pi$, $\vert\theta_k\vert\downarrow 0$, 
and $\langle\vert e^{\ie \theta_k}-1\vert\rangle_{k=1}^\infty\in\Sigma(\Ic)$, 
the characteristic set of the ideal~$\Ic$ 
(see Remark~\ref{recall_DFWW04}).
\end{enumerate} 
\end{proposition}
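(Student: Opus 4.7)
The plan is to treat (i) by a direct algebraic computation together with the spectral theorem for normal operators, and then to reduce (ii) to (i) by observing that elements of the proper ideal $\Ic$ are automatically compact.

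For part (i), I would start by writing $(\1+A)^*(\1+A) = \1 + A + A^* + A^*A$ and $(\1+A)(\1+A)^* = \1 + A + A^* + AA^*$. The identity $\1+A\in U(\Hc)$ means both expressions equal $\1$; subtracting gives $A^*A = AA^*$, so $A$ is normal, while the remaining equation is $A+A^*+A^*A=0$. Using the continuous functional calculus for the normal operator $A$, this equation is equivalent to $\bar z + z + |z|^2 = 0$, i.e.\ $|1+z|^2 = 1$, holding on the spectrum $\sigma(A)$; that is, $\sigma(A)\subseteq -1+\TT$. Conversely, if $A$ is normal with $\sigma(A)\subseteq -1+\TT$, then $\1+A$ is normal and, by the spectral mapping theorem, has spectrum inside $\TT$, which is equivalent to $\1+A$ being unitary.

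For part (ii), the forward direction proceeds as follows. If $\1+A\in U_\Ic(\Hc)$, then $A\in\Ic$; since $\Ic$ is a proper ideal of $\Bc(\Hc)$, we have $A\in\Kc(\Hc)$, so $A$ is compact. By (i), $A$ is normal with $\sigma(A)\subseteq -1+\TT$. The spectrum of a compact normal operator consists of $0$ together with a (finite or null) sequence of eigenvalues; enumerating the nonzero ones as $e^{\ie\theta_k}-1$ with $-\pi<\theta_k\le\pi$ and ordering so that $|e^{\ie\theta_k}-1|=2|\sin(\theta_k/2)|$ is non-increasing, the accumulation of the eigenvalues at $0$ gives $|\theta_k|\downarrow 0$. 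For a normal compact operator the singular values are precisely the absolute values of the eigenvalues repeated according to multiplicity, so the singular value sequence of $A$ is exactly $\langle|e^{\ie\theta_k}-1|\rangle_{k\ge 1}$; membership $A\in\Ic$ is then equivalent to this sequence lying in the characteristic set $\Sigma(\Ic)$ (Remark~\ref{recall_DFWW04}).

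For the converse in (ii), given data $\{\theta_k\}$ and eigenvectors taken from the orthonormal basis $\bb$, define $A$ by the spectral representation on $\Hc$ with eigenvalues $e^{\ie\theta_k}-1$ (and $0$ on the orthogonal complement). Since $\langle|e^{\ie\theta_k}-1|\rangle_k\in\Sigma(\Ic)$, the resulting compact normal operator lies in $\Ic$, and $\sigma(A)\subseteq-1+\TT$, so by (i) $\1+A$ is unitary; hence $\1+A\in U_\Ic(\Hc)$. There is no substantial obstacle here: both directions are routine once one recalls that singular values of a compact normal operator coincide with the moduli of its eigenvalues, which is the single fact that bridges the spectral picture in (i) with the ideal-theoretic condition $\langle|e^{\ie\theta_k}-1|\rangle\in\Sigma(\Ic)$.
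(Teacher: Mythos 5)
Your proof is correct, and for part (i) it follows essentially the same route as the paper: normality is extracted from the two unitarity identities, and the spectral condition and the converse come from the spectral theorem for normal operators. The only cosmetic differences are that the paper obtains $\spec(A)\subseteq-1+\TT$ by spectral mapping applied to the unitary $\1+A$, where you instead apply the functional calculus to the residual equation $A+A^*+A^*A=0$, and the paper phrases the converse via a multiplication-operator model $M_\varphi$ with $|\varphi|=1$ a.e., where you simply invoke the (equivalent) fact that a normal operator with spectrum in $\TT$ is unitary. The more substantive point is that the paper's proof stops after part (i) and leaves part (ii) implicit; you supply the missing argument, correctly resting on the one bridging fact that the singular values of a compact normal operator are the moduli of its eigenvalues in decreasing order, so that $A\in\Ic$ is equivalent to $\langle|e^{\ie\theta_k}-1|\rangle_{k\ge1}\in\Sigma(\Ic)$. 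One small stylistic remark: in the converse of (ii) you phrase the argument as if constructing $A$ from the data, whereas the claim concerns a given compact normal $A$; the intended reading (diagonalize the given $A$, read off its singular values, conclude $A\in\Ic$, then apply (i)) is clearly what you mean and is correct.
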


\begin{proof}
If $\1+A\in U(\Hc)$, then $\spec(\1+A)\subseteq\TT$ hence $\spec(A)\subseteq-1+\TT$,
and on the other hand $(\1+A)^*(\1+A)=(\1+A)(\1+A)^*=\1$, which implies $A^*A=AA^*$.
(Another way to prove normality is to notice that $A=(\1+A)-\1$ is the difference of two commuting normal operators,
and then one could use either the spectral theorem for normal operators or even the Fuglede commutativity theorem.)

We now prove the converse assertion. 
Since A is normal, $\1 + A$ is also normal, 
so by spectral theorem for normal operators 
$\1 + A$ is unitarily equivalent
to a multiplication operator $M_\varphi$ on some $L^2$-space, where $\varphi$ is an $L^\infty$-function. 
The hypothesis ensures that the spectrum of $\1+A$ is contained in $\TT$, 
hence $\vert\phi\vert=1$ almost everywhere. 
Hence $M_\varphi$ is unitary, which further implies that $\1 + A$ is a unitary operator.
\end{proof}

\begin{remark}\label{recall_DFWW04}
\normalfont
We collect here for later use a few facts on operator ideals;
see \cite[Sect. 4]{DFWW04} for more details on this terminology.
For any integer $n\ge 1$ and any $T\in\Bc(\Hc)$, 
its $n$-th singular number is defined by $s_n(T)=\inf\{\Vert T-F\Vert\mid F\in\Bc(\Hc),\ \rank F < n\}$
and we denote $s(T):=\langle s_n(T)\rangle_{n=1}^\infty$.
Thus $\Vert T\Vert=s_1(T)\ge s_2(T)\ge\cdots$, and $T$ is a compact operator
if and only if $\lim\limits_{n\to\infty}s_n(T)=0$. 

Moreover one has the well-known characterization of $\Bc(\Hc)$-ideals in terms of characteristic sets 
$$\Sigma(\Ic):=\{s(T)\mid T\in\Ic\} $$
with map $\Ic\mapsto\Sigma(\Ic)$ an inclusion-preserving lattice isomorphism 
between the lattice of $\Bc(\Hc)$-ideals and the lattice of characteristic sets 
(for a modern reference see \cite[Sect. 4]{DFWW04}). 
In particular, $\Sigma(\Kc(\Hc))=c_0^*$, the cone of decreasing to zero positive sequences,  
and $\Sigma(\Fc(\Hc))$ are those $c_0^*$ sequences that also are finitely supported. 

An operator ideal $\Ic$ in $\Bc(\Hc)$  is called \textit{arithmetic mean closed}
if it is closed under $s(\cdot)$-majorization, i.e., 
it satisfies the condition:
if $T\in\Bc(\Hc)$ where for some $K\in\Ic$ we have
$$(\forall n\ge 1)\quad s_1(T)+\cdots+s_n(T)\le s_1(K)+\cdots+s_n(K),$$
then necessarily $T\in\Ic$.
Here are some important properties of these ideals:
\begin{enumerate}[(i)]
\item\label{recall_DFWW04_item1}
If $\Ic$ is arithmetic mean closed  and  $\{P_n\}_{1\le n<N}$ is a family of mutually orthogonal projections in $\Hc$,
where $1\le N\le\infty$, then for every $T\in\Ic$ we have $\sum\limits_{1\le n<N}P_nTP_n\in\Ic$
as a direct consequence of \cite[Ch. II, Th. 5.1]{GK69}.
\item\label{recall_DFWW04_item2}
Conversely, if $\{P_n\}_{1\le n<N}$ is a family of mutually orthogonal rank-one projections with $\sum\limits_{1\le n<N}P_n=\1$
and for every $T\in\Ic$ we have $\sum\limits_{1\le n<N}P_nTP_n\in\Ic$, then the ideal $\Ic$ is arithmetic mean closed
by \cite[Th. 4.5]{KW12}.
\item\label{recall_DFWW04_item3}
Let $\Phi$ be a symmetric norming function as in \cite[Ch.III, \S 3]{GK69}.
In our notation, this equivalently can be defined as a norm on the space of finite-rank diagonal operators
$\Phi\colon\Dc\cap\Fc(\Hc)\to[0,\infty)$ such that $\Phi(P)=1$ if $P$ is a rank-one orthogonal projection
and $\Phi(V_\sigma DV_\sigma^{-1})=\Phi(\vert D\vert)$ for all $D\in\Dc\cap\Fc(\Hc)$ and $\sigma\in\SS_\infty$ 
(see ``Permutation groups'' below for definitions of $\SS_\infty$ and $V_\sigma$).

Let $\Sg_\Phi$ be the set of all operators $T\in\Kc(\Hc)$ such that
$$\Vert T\Vert_\Phi:=\sup_{n\ge 1}\Phi(\diag(s_1(T),\dots,s_n(T),0,0,\dots))<\infty.$$
Then $\Sg_\Phi$ is an operator ideal endowed with the complete (symmetric) norm $\Vert\cdot\Vert_\Phi$
by \cite[Ch. III, Th. 4.1]{GK69},
and this ideal is arithmetic mean closed
(see \cite[subsect. 4.9]{DFWW04} or the Dominance Property in \cite[Ch. III, \S 4]{GK69}).
In particular, for $\Phi(\cdot)=\Vert\cdot\Vert_{\ell^p}$ 
we obtain that the Schatten ideal $\Sg_p(\Hc)$ is arithmetic mean closed whenever $1\le p\le\infty$.
Note also that for $\Phi(\cdot)=\Vert\cdot\Vert_\infty$, $\Sg_\Phi=\Sg_\infty(\Hc)=\Kc(\Hc)$.
\end{enumerate}
\qed
\end{remark}

\subsection*{Permutation groups}
We denote by $\SS_\infty$ the group of all permutations of the positive integers $\{1,2,\dots\}$
and by $\SS_\fin$ its subgroup consisting of the ``finite permutations'',
that is, the permutations that leave fixed all but finitely many natural numbers.
There is a group homomorphism
$$\SS_\infty\to U(\Hc),\quad \sigma\mapsto V_\sigma,$$
which depends on the choice of the orthonormal basis $\bb$,
and is defined by $V_\sigma(\bb_n)=\bb_{\sigma^{-1}(n)}$ whenever $n\in\NN$ and $\sigma\in\SS_{\infty}$. 
Clearly also then $V_\sigma^{-1}=V_{\sigma^{-1}}$. 

\begin{remark}
\normalfont
If $D\in\Dc$, then for every $\sigma\in\SS_\infty$
we have $V_\sigma D V_{\sigma}^{-1}\in\Dc$,
and the diagonal entries of the latter diagonal operator are precisely the diagonal entries of $D$
permuted according to~$\sigma$.
\qed
\end{remark}

\begin{proposition}\label{S_fin}
If $\Ic$ is a proper ideal of $\Bc(\Hc)$ and $\sigma\in\SS_\infty$,
then $V_\sigma\in \1+\Ic$ if and only if $\sigma\in\SS_{\fin}$.
\end{proposition}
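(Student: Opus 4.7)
The plan is to prove the two implications separately, using the key fact that every proper ideal $\Ic$ of $\Bc(\Hc)$ satisfies $\Fc(\Hc)\subseteq\Ic\subseteq\Kc(\Hc)$, as recalled in the preliminaries.

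First I would handle the easy direction ($\Leftarrow$). Suppose $\sigma\in\SS_\fin$, so there is a finite subset $F\subset\NN$ outside of which $\sigma$ is the identity. Then $V_\sigma\bb_n=\bb_n$ for every $n\notin F$, which means $V_\sigma-\1$ vanishes on the closed linear span of $\{\bb_n\mid n\notin F\}$. Consequently $V_\sigma-\1$ has rank at most $|F|$, so $V_\sigma-\1\in\Fc(\Hc)\subseteq\Ic$, giving $V_\sigma\in\1+\Ic$.

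For the harder direction ($\Rightarrow$), I would argue by contraposition: assume $\sigma\notin\SS_\fin$ and show $V_\sigma-\1\notin\Kc(\Hc)$, which is enough since $\Ic\subseteq\Kc(\Hc)$. Because $\sigma$ is not a finite permutation, the set $N_\sigma:=\{n\in\NN\mid \sigma^{-1}(n)\ne n\}$ is infinite. For each $n\in N_\sigma$,
\[
(V_\sigma-\1)\bb_n \;=\; \bb_{\sigma^{-1}(n)}-\bb_n,
\]
and since $\bb_{\sigma^{-1}(n)}\perp\bb_n$ this has norm $\sqrt{2}$. However, the sequence $\{\bb_n\}_{n\ge 1}$ converges weakly to $0$ in $\Hc$, so if $V_\sigma-\1$ were compact, then $\|(V_\sigma-\1)\bb_n\|\to 0$. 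This contradicts $\|(V_\sigma-\1)\bb_n\|=\sqrt{2}$ for the infinitely many $n\in N_\sigma$. Hence $V_\sigma-\1\notin\Kc(\Hc)\supseteq\Ic$.

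The argument is entirely elementary; the only delicate point is recognizing that one should exploit the inclusion $\Ic\subseteq\Kc(\Hc)$ rather than reason directly about $\Ic$, and then use weak-to-norm continuity of compact operators on the orthonormal basis. Everything else is a direct computation with $V_\sigma\bb_n=\bb_{\sigma^{-1}(n)}$.
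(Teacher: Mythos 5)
Your proof is correct and follows essentially the same route as the paper's: both directions rest on the inclusions $\Fc(\Hc)\subseteq\Ic\subseteq\Kc(\Hc)$ and on the computation $\Vert(V_\sigma-\1)\bb_n\Vert=\sqrt{2}$ when $\sigma(n)\ne n$, combined with the fact that a compact operator sends the weakly null sequence $\{\bb_n\}$ to a norm-null sequence. The only cosmetic difference is that you phrase the forward implication as a contrapositive while the paper argues directly from $V_\sigma=\1+K$ with $K\in\Ic$ compact.
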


\begin{proof}
It is clear that if $\sigma\in\SS_{\fin}$ then $V_\sigma\in \1+\Fc(\Hc)\subseteq\1+\Ic$
since, as is well-known, all proper ideals of $\Bc(\Hc)$ contain the ideal of finite rank operators.
Conversely, assume that for $\sigma\in\SS_\infty$ and  $K\in\Ic$ we have $V_\sigma=\1+K$.
Since $\Ic\ne\Bc(\Hc)$, it follows that $K$ is a compact operator,
hence $\lim\limits_{n\to\infty}\Vert K\bb_n\Vert=0$.
On the other hand for every $n\in\NN$ we have
$$\Vert K\bb_n\Vert=\Vert (V_\sigma-\1)\bb_n\Vert=\Vert \bb_{\sigma^{-1}(n)}-\bb_n\Vert
=\begin{cases}
\sqrt{2} &\text{ if }\sigma(n)\ne n,\\
0 &\text{ if }\sigma(n)=n.
\end{cases}$$
Therefore the condition $\lim\limits_{n\to\infty}\Vert K\bb_n\Vert=0$ implies 
$\sigma(n)=n$ eventually, that is, 
$\sigma\in\SS_{\fin}$.
\end{proof}

\section{$U_{\Ic}(\Hc)$-diagonalization}\label{u-diag}

The aim of this section is to investigate the 
\textit{$U_{\Ic}(\Hc)$-diagonalizable operators in~$\Ic$},
that is, investigate the set
$$\Dc_{\Ic}:=\{VDV^*\mid D\in\Dc\cap\Ic,\, V\in U_{\Ic}(\Hc)\}
=\bigcup_{V\in U_{\Ic}(\Hc)}V(\Dc\cap\Ic)V^*\subseteq\Ic$$
for  $\Ic$ any operator ideal in $\Bc(\Hc)$ and recalling that $\Dc$ is the set of all diagonal operators 
with respect to the fixed basis $\bb=\{\bb_n\}_{n\ge1}$. 
Here we have the union of the sets in 
the $U_{\Ic}(\Hc)$-conjugacy class of the Cartan subalgebra $\Dc\cap\Ic$ of $\Ic$ 
(see also Proposition~\ref{descr} and Definition~\ref{action} below).
This is a set of normal operators in $\Ic$ and
we will also consider its self-adjoint part
\begin{equation}\label{Dsa}
\Dc_{\Ic}^{\sa}:=\Dc_{\Ic}\cap\Ic^{\sa}=\{VDV^*\mid D=D^*\in\Dc\cap\Ic,\, V\in U_{\Ic}(\Hc)\}.
\end{equation} 
It follows by \cite[Prop. 4.3(1)]{BPW13b}
that
\begin{equation}\label{df}
\text{ if }\Ic=\Fc(\Hc)\text{ then }\Dc_{\Ic}^{\sa}=\Ic^{\sa}, 
\end{equation}
but we will prove that this latter equality fails to be true
for any other nontrivial ideal (see Corollary~\ref{conj} below). 

We take this opportunity to offer alternative notions of 
restricted diagonalizability:  
$$\Dc_{\Ic,\Jc}:=\{VDV^*\mid D\in\Dc\cap\Jc,\, V\in U_{\Ic}(\Hc)\}
=\bigcup_{V\in U_{\Ic}(\Hc)}V(\Dc\cap\Jc)V^*\subseteq\Jc.$$
We expect a fair amount of overlap between results in this paper and 
any investigations into these more general notions. 
The difference is essentially the focus on which normal pure point spectrum operators each considers.

\subsection*{General facts {\rm (Proposition~\ref{7april2014}--Corollary~\ref{conj})}}\hfill

We first record the following 
\textit{uniqueness properties of $U_{\Ic}(\Hc)$-dia\-gona\-li\-za\-tion}. 

\begin{proposition}\label{7april2014}
 If a normal compact operator $X\in \Ic\subsetneqq\Bc(\Hc)$ with spectral multiplicities one is 
$U_{\Ic}(\Hc)$-diagonalizable  to a diagonal operator $D$ 
with respect to the fixed  basis $\bb=\{b_n\}_{n\ge 1}$, 
 then $D$ is unique up to a finite permutation. 
 Furthermore, $X$ can be $U_{\Ic}(\Hc)$-diagonalized to every finite permutation of $D$.
\end{proposition}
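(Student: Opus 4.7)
The second (``furthermore'') assertion is a direct calculation, so the plan is to dispose of it first. Given a $U_\Ic(\Hc)$-diagonalization $X=VDV^*$ and any $\tau\in\SS_\fin$, one sets $V':=VV_\tau^{-1}$ and $D':=V_\tau DV_\tau^{-1}$. Then $X=V'D'(V')^*$, so it remains only to verify $V'\in U_\Ic(\Hc)$. Since $V_\tau-\1\in\Fc(\Hc)\subseteq\Ic$ by Proposition~\ref{S_fin}, the decomposition $V'-\1=(V-\1)V_\tau^{-1}+(V_\tau^{-1}-\1)$ combined with $V-\1\in\Ic$ finishes this part.

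For uniqueness, suppose $X=V_1D_1V_1^*=V_2D_2V_2^*$ with $V_j\in U_\Ic(\Hc)$ and $D_j\in\Dc\cap\Ic$. Because both diagonal operators are unitarily equivalent to $X$ and $X$ has simple spectrum, the diagonal entries of $D_1$ (respectively $D_2$) are pairwise distinct, and their multisets coincide; hence there is $\sigma\in\SS_\infty$ with $D_2=V_\sigma D_1V_\sigma^{-1}$. Substituting into $V_1D_1V_1^*=V_2D_2V_2^*$ shows that $S:=(V_2V_\sigma)^*V_1$ commutes with $D_1$, and since $D_1$ has pairwise distinct diagonal entries its commutant in $\Bc(\Hc)$ is exactly $\Dc$. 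Thus $S$ is a diagonal unitary, and a rearrangement gives
\[
V_\sigma S=V_2^*V_1\in\1+\Ic.
\]

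The real obstacle is now to promote $\sigma\in\SS_\infty$ to $\sigma\in\SS_\fin$, and for this the plan is to mimic the compactness argument from the proof of Proposition~\ref{S_fin}, complicated slightly by the presence of the diagonal factor $S=\diag(s_n)$. Write $V_\sigma S=\1+K$ with $K\in\Ic\subseteq\Kc(\Hc)$. A direct computation gives $K\bb_n=s_n\bb_{\sigma^{-1}(n)}-\bb_n$, so
\[
\|K\bb_n\|=\begin{cases}|s_n-1|&\text{if }\sigma(n)=n,\\ \sqrt{2}&\text{if }\sigma(n)\ne n.\end{cases}
\]
Compactness of $K$ forces $\|K\bb_n\|\to 0$, so $\sigma(n)=n$ for all but finitely many $n$, that is, $\sigma\in\SS_\fin$. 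The key point that makes this work is that the diagonal correction $S$ cannot mask the $\sqrt{2}$-gap on the basis vectors actually moved by $\sigma$, so its presence is harmless. With $\sigma\in\SS_\fin$ in hand, $D_2=V_\sigma D_1V_\sigma^{-1}$ is a finite permutation of $D_1$, completing uniqueness.
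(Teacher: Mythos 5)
Your proposal is correct and follows essentially the same route as the paper: match the spectra to get a permutation $\sigma\in\SS_\infty$ with $D_2=V_\sigma D_1V_\sigma^{-1}$, use the multiplicity-one hypothesis to reduce the connecting unitary $V_2^*V_1$ to the form $V_\sigma\cdot(\text{diagonal unitary})$, and then invoke compactness of $V_2^*V_1-\1$ on the basis vectors (exactly the $\sqrt2$-gap argument of Proposition~\ref{S_fin}) to force $\sigma\in\SS_\fin$; the ``furthermore'' part via $V_\tau-\1\in\Fc(\Hc)$ is also the paper's argument. If anything, your factorization through the commutant of $D_1$ is a slightly cleaner bookkeeping of the same idea.
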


\begin{proof}
Suppose $W_1$ and $W_2$ are two unitary operators in $U_{\Ic}(\Hc)$ with 
$W_1XW_1^{*} = D_1$ and $W_2XW_2^{*} = D_2$. 
This implies that $D_1 = \diag (d_n)$ and $D_2$ have the same spectrum.
Hence $D_2= V_{\sigma}D_1V_\sigma^{-1} = V_{\sigma}D_1V_{\sigma^{-1}}$ 
for some permutation $\sigma \colon\NN\to\NN$.
But then $W_2XW_2^{*} =  V_{\sigma}D_1V_{\sigma^{-1}}$, 
hence $XW_2^{*} V_{\sigma}= W_2^{*}V_{\sigma}D_1$.  
Applying this to $b_n$ we obtain 
$XW_2^{*} V_{\sigma}b_n= W_2^{*}V_{\sigma}D_1b_n$, 
so 
$$XW_2^{*}b_{\sigma^{-1}(n)} = d_nW_2^{*}b_{\sigma^{-1}(n)}.$$ 
Since $X$ has distinct eigenvalues, $W_2^{*}b_{\sigma^{-1}(n)} = \alpha_nb_n$ 
for some $\vert\alpha_n\vert = 1$ for all $n$. 
But $W_2^{*} = \textbf 1 + K$, so one obtains 
$(\1 + K)b_{\sigma^{-1}  (n)} = \alpha_nb_n$ and then 
$$\Vert Kb_{\sigma^{-1}  (n)}\Vert =  \Vert\alpha_nb_n - b_{\sigma^{-1}  (n)}\Vert.$$ 
Since $K$ is a compact operator, $\Vert Kb_{\sigma^{-1}  (n)}\Vert  \rightarrow 0$ which implies that 
$\sigma^{-1} (n) = n$ eventually.
Therefore $\sigma $ must be a finite permutation. 
This observation together with $D_2 = V_{\sigma}D_1V_{\sigma^{-1}}$ proves the claim. 
Although not used here, it is clear also that $\alpha_n \rightarrow 1$. 

To prove the last assertion of Proposition~\ref{7april2014}, 
notice that every finite permutation is a unitary of the form $\1 + F$ with $F \in \Fc(\Hc)$. 
(This is because for finite permutations $\sigma$, $V_\sigma=\1+(V_\sigma-\1)$ and since 
$V_\sigma\bb_n=\1\bb_n=\bb_n$ eventually, $V_\sigma-\1=F$ is finite rank.) 
Therefore, if $X$ is $U_{\Ic}(\Hc)$-diagonalizable to a diagonal operator $D$, that is, $W_1XW_1^{*} = D$ 
for some $W_1=\1+K_1\in U_{\Ic}(\Hc)$,  
then $X$ is also $U_{\Ic}(\Hc)$-diagonalizable to $D'$, any finite permutation of $D$.   
Indeed, $V_{\sigma^{-1}}W_1XW_1^{*}V_{\sigma} = D'$ 
for some finite permutation $\sigma$, 
and so 
$V_\sigma=\1+F$ and 
$$V_{\sigma^{-1}}W_1=(\1+F^*)(\1+K_1)=\1+F^*+K_1+F^*K_1$$
is a unitary of the form $\1 + K$ for $K \in \Ic$. 
\end{proof}

Note $W_1$ and $W_2$ are related in the above setting.  
In fact, if $W_1XW_1^*=D_1$ and $W_2XW_2^*=D_2$ are a permutation of each other 
(not necessarily finite permutation), 
without attempting to obtain an explicit form for the unitaries $W_1$ and $W_2$, 
we can however obtain an explicit form for $W_1W_2^{*}$, that is, 
information on how $W_1$ and $W_2$ relate: 
\\
Since $W_2XW_2^{*} =  V_{\sigma}D_1V_{\sigma^{-1}}$,
we have $X = W_2^{*}V_{\sigma}D_1V_{\sigma^{-1}}W_2
= W_1^{*}D_1W_1$. 
Therefore $W_2W_1^{*}D_1W_1W_2^{*} = V_{\sigma}D_1V_{\sigma^{-1}}$. 
If $U:=W_2W_1^{*}$  
then 
$UD_1U^{*} = V_{\sigma}D_1V_{\sigma^{-1}}$, 
and hence 
$$D_1(U^{*}V_{\sigma}) = (U^{*}V_{\sigma})D_1.$$ 
The multiplicities one condition on $D_1$ 
then implies that the unitary operator $U^{*}V_{\sigma}$ is a diagonal operator, 
say $D_{u}\in\Dc$. 
That is, $W_1W_2^{*}V_{\sigma} = D_u$ 
and hence  $W_1W_2^{*} = V_{\sigma^{-1}}D_u$. 
So $W_2$ in terms of $W_1$ has the form $W_2=D_u^*V_\sigma W_1$ 
(even if, as in the case of infinite $\sigma$, neither $W_1$ nor $W_2$ are in $U_{\Ic}(\Hc)$). 
In the proof of Proposition~\ref{7april2014} $\sigma$ is in addition a finite permutation.

\textit{On the structure of $\Dc_{\Ic}$ and $\Dc_{\Ic}^{\sa}$}.
The geometric shape of the set $\Dc_{\Ic}^{\sa}$ from \eqref{Dsa} is not clear in general. 
For instance Remark~\ref{WvN} shows that it need not be a real linear subspace of $\Ic^{\sa}$.
In the case when $\Ic\ne\Ic^2$, some information on the shape and size of $\Dc_{\Ic}$ and $\Dc_{\Ic}^{\sa}$
is discussed below in Remark~\ref{WvN}--Question~\ref{second_quest}, Corollary~\ref{conj}, 
and Remark~\ref{info}.

\begin{remark}\label{WvN}
\normalfont
When $\Ic=\Bc(\Hc)$ (so  $U_{\Ic}(\Hc)=U(\Hc)$), 
then $\Dc_{\Ic}$ is the set of all normal operators in $\Bc(\Hc)$
with pure point spectrum 
for which $\Hc$ decomposes into the orthogonal direct sum of the eigenspaces 
of such an operator. 
Hence $\Dc_{\Ic}^{\sa}\subsetneqq\Ic^{\sa}$ 
because of the existence of self-adjoint operators without eigenvalues.

It is instructive to note that usually (i.e., when $\Ic\ne\Bc(\Hc)$), 
the basis $\bb=\{\bb_n\}_{n\ge1}$ is tied to $\Dc$ and hence also to $\Dc_{\Ic}$. 
However, when $\Ic=\Bc(\Hc)$, all bases yield the same $\Dc_{\Ic}$. 

Moreover, $\Dc_{\Ic}^{\sa}$ is not additive. 
Indeed, the Weyl-von Neumann theorem implies that if $A=A^*\in\Ic=\Bc(\Hc)$, 
then there exist $A_1\in\Dc_{\Ic}^{\sa}$ and
$A_2=A_2^*\in\Kc(\Hc)$ for which $A=A_1+A_2$
(see for instance~\cite[Sect. II.4]{Da96}, or \cite{Ku58}
for a generalization involving symmetrically normed ideals).
Furthermore, if the self-adjoint operator $A$ does not have pure point spectrum,
then we obtain $A_1,A_2\in\Dc_{\Ic}^{\sa}$ and $A_1+A_2=A\not\in\Dc_{\Ic}^{\sa}$.
This shows that $\Dc_{\Ic}^{\sa}$ fails to be a real linear subspace of $\Ic^{\sa}$ if $\Ic=\Bc(\Hc)$.
\qed
\end{remark}

Now the following question naturally arises.

\begin{question}\label{second_quest}
\normalfont
Is it true that the final part of Remark~\ref{WvN} can be generalized 
in the sense that for every nonzero ideal $\Ic$ in $\Bc(\Hc)$, 
the set $\Dc_{\Ic}^{\sa}$  is a proper subset of $\Ic^{\sa}$ 
and beyond Remark~\ref{WvN}: does it real linearly spans $\Ic^{\sa}$? 
\qed
\end{question}

We do not know the answer to the second part of this question, that is, whether 
$\Ic^{\sa}$ is the real linear span of $\Dc_{\Ic}^{\sa}$. 
As regards the part of the above question concerning the proper inclusion $\Dc_{\Ic}^{\sa}\subsetneqq\Ic^{\sa}$, 
it is answered in the affirmative in Corollary~\ref{conj} below 
whose proof is based on the following Proposition~\ref{bases11}--Corollary~\ref{bases2}.

\begin{notation}
\normalfont
$\Dc(\vv)$ denotes the set of operators in $\Bc(\Hc)$
that are diagonal with respect to the orthonormal basis of $\Hc$, $\vv=\{\vv_n\}_{n\ge 1}$. 
\end{notation}

\begin{proposition}\label{bases11}
Let $\vv=\{\vv_n\}_{n\ge 1}$ and $\ww=\{w_n\}_{n\ge 1}$ be orthonormal bases in $\Hc$.  
Suppose the distances between the parts of the 1-dimensional subspaces spanned by $v_m$ and by $w_n$ that lie on the surface of the unit ball are uniformly bounded away from 0. 
 That is,  for some $\delta > 0$, 
$\Vert\beta w_n-\alpha v_m\Vert \ge \delta$ for all $n,m \ge 1$ and 
$\vert\beta\vert=\vert\alpha\vert=1$, or equivalently and more simply, 
$\Vert w_n-\alpha v_m\Vert \ge \delta$ for all $n,m \ge 1$ and $\vert\alpha\vert=1$.

Then for every operator $X\in\Dc(\vv)$ with spectral multiplicities one
and  every $W\in U_{\Kc(\Hc)}$ one has $WXW^{-1}\not\in\Dc(\ww)$. 
Consequently, the $U_{\Kc(\Hc)}$-orbits of the multiplicities one operators of $\Dc(\vv)$ and $\Dc(\ww)$ are disjoint. 
\end{proposition}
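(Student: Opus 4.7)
The plan is to argue by contradiction: suppose some $X\in\Dc(\vv)$ with simple spectrum and some $W\in U_{\Kc(\Hc)}$ satisfy $Y:=WXW^{-1}\in\Dc(\ww)$. Write $X=\sum_{m\ge 1}\lambda_m\langle\cdot,\vv_m\rangle \vv_m$ with the $\lambda_m$ pairwise distinct, and $Y=\sum_{n\ge 1}\mu_n\langle\cdot,\ww_n\rangle \ww_n$. Since $Y$ is unitarily equivalent to $X$, it has the same (simple) spectrum, so $\{\mu_n\}_{n\ge 1}$ is also a set of pairwise distinct scalars equal to $\{\lambda_m\}_{m\ge 1}$ as multisets.

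The next step is to identify the action of $W$ on the basis $\vv$. Since $W$ intertwines $X$ and $Y$, the vector $W\vv_m$ is an eigenvector of $Y$ for the eigenvalue $\lambda_m$. Because $Y$ has simple spectrum, its $\lambda_m$-eigenspace is exactly $\CC\ww_{\sigma(m)}$ for the unique index $\sigma(m)\in\NN$ with $\mu_{\sigma(m)}=\lambda_m$; thus $\sigma\colon\NN\to\NN$ is a bijection and $W\vv_m=\alpha_m\ww_{\sigma(m)}$ for some scalar $\alpha_m$ with $\vert\alpha_m\vert=1$.

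Now write $W=\1+K$ with $K\in\Kc(\Hc)$. Then for every $m\ge 1$,
\begin{equation*}
K\vv_m=W\vv_m-\vv_m=\alpha_m\ww_{\sigma(m)}-\vv_m,
\end{equation*}
so by unimodularity of $\alpha_m$ and the standing hypothesis (with $n=\sigma(m)$ and $\alpha=\overline{\alpha_m}$),
\begin{equation*}
\Vert K\vv_m\Vert=\Vert\alpha_m\ww_{\sigma(m)}-\vv_m\Vert=\Vert \ww_{\sigma(m)}-\overline{\alpha_m}\vv_m\Vert\ge\delta.
\end{equation*}
On the other hand, $\vv_m\to 0$ weakly as $m\to\infty$, and $K$ is compact, so $\Vert K\vv_m\Vert\to 0$. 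This contradicts the uniform lower bound $\delta>0$, establishing $WXW^{-1}\not\in\Dc(\ww)$. The last assertion is then immediate: any multiplicities-one operator in $\Dc(\vv)$ and any multiplicities-one operator in $\Dc(\ww)$ lying in the same $U_{\Kc(\Hc)}$-orbit would produce exactly the forbidden relation.

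The only real obstacle is pinning down the bijection $\sigma$ and the unimodular phases $\alpha_m$, which requires the simplicity of the spectrum of $X$ (and hence $Y$); once this is in hand, the hypothesis on the separation of the unit circles in $\CC \vv_m$ and $\CC\ww_n$ feeds directly into the incompatibility with compactness of $K$.
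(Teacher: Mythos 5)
Your proof is correct and follows essentially the same route as the paper: use simplicity of the spectrum to deduce that $W$ (the paper works with $W^{-1}$, an immaterial difference) must carry each $\vv_m$ onto a unimodular multiple of some $\ww_{\sigma(m)}$, then derive the contradiction $\delta\le\Vert K\vv_m\Vert\to 0$ from compactness of $K$ and the weak nullity of the orthonormal sequence. Nothing is missing.
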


\begin{proof}
In fact, if $Xv_n = \lambda_n v_n$ for all $n\ge1$ 
and if $W\in U_{\Kc(\Hc)}$ and $WXW^{-1}$ were a diagonal operator with
respect to the basis $\{w_n\}_{n\ge1}$,
then the 1-dimensional subspace spanned by the vectors of the latter basis
are precisely the eigenspaces of $X$ since every eigenvalue of $X$ has multiplicity one 
and so likewise any operator in its unitary orbit has the same eigenvalues also with multiplicities one 
(i.e., these eigenvalues are distinct).
Therefore if 
$WXW^{-1} w_n = \lambda'_n w_n$  then 
 $X W^{-1} w_n = \lambda'_n W^{-1}w_n$ and where by the distinctness of the eigenvalues and their all having multiplicity one,
we have a bijection $\sigma$ and scalars  $\vert\alpha_n\vert = 1$ for which 
$\lambda'_n = \lambda_{\sigma(n)}$ and $W^{-1} w_n = \alpha_n v_{\sigma(n)}$ for all $n \ge 1$. 

Moreover, since $W^{-1}\in U_{\Kc(\Hc)}$, one has $W^{-1}=\1+K$ for some $K\in\Kc(\Hc)$.
Then for every $n\ge 1$, $Kw_n=-w_n+\alpha_n \vv_{\sigma(n)}$,
hence $\Vert Kw_n\Vert\ge\delta$.
However, since the sequence $\{w_n\}_{n\ge 1}$ is orthonormal, so weakly convergent to $0$ in~$\Hc$, 
and $K$ is a compact operator,
we have $\lim\limits_{n\to\infty}\Vert Kw_n\Vert=0$,
which contradicts being bounded below.
This negates our initial assumption that the operator $WXW^{-1}$ is diagonal with respect
to the basis $\{w_n\}_{n\ge 1}$, consequently $WXW^{-1}\not\in\Dc$. 
\end{proof}

The following corollary is a reframing of Proposition~\ref{bases11} in a slightly more complicated form, 
but in a more useful form for our approach. 

\begin{corollary}\label{bases1}
Let $\vv=\{\vv_n\}_{n\ge 1}$ and $\ww=\{w_n\}_{n\ge 1}$ be orthonormal bases in $\Hc$. 
Assume there exists $\delta>0$ such that 
for arbitrary scalar sequences $\langle\alpha_n\rangle_{n=1}^\infty$ and $\langle\beta_n\rangle_{n=1}^\infty$
in the unit circle in $\CC$ one has $\Vert \beta_n w_n-\alpha_m \vv_m\Vert\ge \delta$
for all $n,m\ge 1$.

Then for every operator $X\in\Dc(\vv)$ with spectral multiplicities one
and  every $W\in U_{\Kc(\Hc)}$ one has $WXW^{-1}\not\in\Dc(\ww)$.
\end{corollary}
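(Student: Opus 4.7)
The plan is straightforward: Corollary~\ref{bases1} is essentially a repackaging of Proposition~\ref{bases11}, and the proof reduces to checking that the two hypotheses are logically equivalent, after which Proposition~\ref{bases11} applies directly.

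First I would verify the equivalence of the hypotheses. The condition in Corollary~\ref{bases1} demands that for all unimodular sequences $\langle\alpha_n\rangle$, $\langle\beta_n\rangle$ and all indices $n,m\ge 1$, one has $\Vert\beta_n w_n-\alpha_m \vv_m\Vert\ge\delta$. Because the sole constraint on each entry is that it lie in $\TT$ and because the bound is required pointwise in $(n,m)$, this is equivalent to demanding $\Vert\beta w_n-\alpha \vv_m\Vert\ge\delta$ for every $n,m\ge 1$ and every $\alpha,\beta\in\TT$. Since $\vert\beta\vert=1$, one can factor $\Vert\beta w_n-\alpha \vv_m\Vert=\Vert w_n-\bar\beta\alpha \vv_m\Vert$, and as $\bar\beta\alpha$ traverses $\TT$ when $\alpha,\beta$ do, this is the same as the hypothesis of Proposition~\ref{bases11}, namely $\Vert w_n-\gamma \vv_m\Vert\ge\delta$ for all $n,m$ and $\gamma\in\TT$.

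Having established the equivalence of the hypotheses, I would then invoke Proposition~\ref{bases11} to conclude that $WXW^{-1}\not\in\Dc(\ww)$ for every $X\in\Dc(\vv)$ with spectral multiplicities one and every $W\in U_{\Kc(\Hc)}$.

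There is no substantive obstacle: the corollary differs from Proposition~\ref{bases11} only in cosmetic packaging, as the authors explicitly flag in the sentence preceding the statement. The only step worth articulating carefully is the observation that the universal quantification over unimodular sequences in the corollary collapses pointwise to universal quantification over individual unimodular scalars, which is what makes the reduction to Proposition~\ref{bases11} immediate.
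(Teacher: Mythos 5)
Your proposal is correct and matches the paper's own proof, which is literally ``Use Proposition~\ref{bases11}''; you merely spell out the routine equivalence of the two hypotheses that the paper leaves implicit. Nothing further is needed.
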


\begin{proof}
Use Proposition~\ref{bases11}.
\end{proof}

\begin{proposition}\label{bases22}
Fix any orthonormal basis $\bb=\{b_n\}_{n\ge 1}$ of $\Hc$. 
There exists a family of vectors $\{\vv^{(t)}_n\mid n\in\NN,\, 0\le t<\pi/2\}$
with the following properties:
\begin{enumerate}[(i)]
\item\label{bases22_item1}
For every $t\in[0,\pi/2)$ the subfamily $\vv^{(t)}:=\{\vv^{(t)}_n\}_{n\ge 1}$ is an orthonormal basis in $\Hc$ 
 with $\vv^{(0)}=\bb$.
\item\label{bases22_item2}
If $t\ne s\in[0,\pi/2)$, then there exists $\delta>0$ with   
$\Vert v^{(t)}_n - \alpha v^{(s)}_m\Vert \ge \delta$
for all $n,m\ge 1$ and $\alpha\in\CC$ with $\vert\alpha\vert=1$.
\end{enumerate}
\end{proposition}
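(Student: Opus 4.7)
The plan is to build $v^{(t)}$ by a simultaneous rotation by angle $t$ inside each consecutive $2$-dimensional block $\spann\{b_{2k-1},b_{2k}\}$. Explicitly, for $t\in[0,\pi/2)$ and $k\ge 1$, set
\begin{align*}
v^{(t)}_{2k-1} &:= \cos(t)\, b_{2k-1} + \sin(t)\, b_{2k},\\
v^{(t)}_{2k}   &:= -\sin(t)\, b_{2k-1} + \cos(t)\, b_{2k}.
\end{align*}
Since each block operation is a $2$D unitary rotation and distinct blocks are mutually orthogonal, $v^{(t)}=\{v^{(t)}_n\}_{n\ge1}$ is an orthonormal basis of $\Hc$; clearly $v^{(0)}=\bb$. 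This gives~\eqref{bases22_item1}.

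For~\eqref{bases22_item2}, the first step is to convert the separation condition via the polarization identity:
\[
\|v^{(t)}_n-\alpha v^{(s)}_m\|^2 = 2 - 2\,\mathrm{Re}\bigl(\bar\alpha\,\langle v^{(t)}_n, v^{(s)}_m\rangle\bigr),
\]
so that minimizing over $\alpha\in\TT$ yields
\[
\inf_{|\alpha|=1}\|v^{(t)}_n-\alpha v^{(s)}_m\| = \sqrt{2 - 2\,|\langle v^{(t)}_n, v^{(s)}_m\rangle|}.
\]
It therefore suffices to produce a uniform bound $\sup_{n,m}|\langle v^{(t)}_n, v^{(s)}_m\rangle| < 1$ whenever $s\ne t$.

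The second step is a direct computation of these inner products. If $n$ and $m$ lie in different blocks, then $\langle v^{(t)}_n, v^{(s)}_m\rangle = 0$. If they lie in the same block, the value is $\pm\cos(t-s)$ when $n,m$ share parity within the block and $\pm\sin(t-s)$ otherwise; crucially, these values are \emph{independent of the block index~$k$}. Hence only the two magnitudes $|\cos(t-s)|$ and $|\sin(t-s)|$ can arise in the supremum. For distinct $t,s\in[0,\pi/2)$ we have $t-s\in(-\pi/2,\pi/2)\setminus\{0\}$, on which both $|\cos(t-s)| < 1$ and $|\sin(t-s)| < 1$, so setting
\[
\delta := \sqrt{2 - 2\max\bigl(|\cos(t-s)|,\, |\sin(t-s)|\bigr)} > 0
\]
does the job.

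The sole potential obstacle is the uniformity of the lower bound over all pairs $(n,m)$, and the pair-block construction defuses this by forcing the possible inner products into a finite list independent of~$k$. The careful choice of the parameter range $[0,\pi/2)$ (rather than $[0,\pi/2]$ or larger) is what keeps $|\sin(t-s)|$ bounded away from~$1$; extending the range would allow $t-s=\pm\pi/2$ and force $|\sin(t-s)|=1$, destroying the uniform bound.
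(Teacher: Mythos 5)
Your proposal is correct and follows essentially the same route as the paper: the identical blockwise rotation by angle $t$ in each plane $\spann\{b_{2k-1},b_{2k}\}$, followed by reducing the separation estimate to the uniform bound $\sup_{n,m}\vert\langle v^{(t)}_n,v^{(s)}_m\rangle\vert\le\max\{\vert\cos(t-s)\vert,\vert\sin(t-s)\vert\}<1$, which is independent of the block index. Your explicit computation of the infimum over $\alpha$ via polarization is a slightly cleaner packaging of the same estimate the paper uses.
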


\begin{proof} 
\eqref{bases22_item1} 
For every $t\in[0,\pi/2)$ and $\bb=\{b_n\}_{n\ge 1}$ 
we define the orthonormal basis $\vv^{(t)}:=\{\vv^{(t)}_n\}_{n\ge 1}$ in $\Hc$ by
$$\vv^{(t)}_{2r-1}=(\cos t)\bb_{2r-1}+(\sin t)\bb_{2r}\quad \text{ and }\quad\vv^{(t)}_{2r}=-(\sin t)\bb_{2r-1}+(\cos t)\bb_{2r}
\text{ for }r\ge 1.$$
Hence 
we perform a rotation of angle $t\in[0,\pi/2)$ in the plane $\RR\cdot\bb_{2r-1}+\RR\cdot\bb_{2r}$
in order to obtain the orthonormal set  $\{\vv^{(t)}_{2r-1},\vv^{(t)}_{2r}\}$ from $\{\bb_{2r-1},\bb_{2r}\}$.
It is then clear that for each $t\in[0,\pi/2)$ the family
$\vv^{(t)}=\{\vv^{(t)}_n\}_{n\ge 1}$ is an orthonormal basis in $\Hc$ 
and that $\vv^{(0)}=\bb$.

\eqref{bases22_item2} Let $0\le s<t<\pi/2$
and 
$\alpha\in\CC$ be arbitrary with $\vert\alpha\vert=1$.
If for some $r\ge 1$ we have
$n\in\{2r-1,2r\}$ and $m\not\in\{2r-1,2r\}$, then $\vv_n^{(t)}\perp \vv_m^{(s)}$ hence 
$\Vert \alpha \vv_n^{(t)}-\vv_m^{(s)}\Vert=\sqrt{2}$.
Moreover, if  $n,m\in\{2r-1,2r\}$ then
$$
\Vert \alpha \vv_n^{(t)}- \vv_m^{(s)}\Vert^2
=2-2\text{Re}\,(\alpha(\vv_n^{(t)},\vv_m^{(s)})) 
\ge 2(1-\vert\text{Re}\,(\vv_n^{(t)},\vv_m^{(s)})\vert).
$$
On the other 
hand, since $s<t$, it follows that the orthonormal set  $\{\vv^{(t)}_{2r-1},\vv^{(t)}_{2r}\}$
is obtained from the orthonormal set  $\{\vv^{(s)}_{2r-1},\vv^{(s)}_{2r}\}$
by performing a rotation of angle $t-s$ in the plane $\RR\cdot\bb_{2r-1}+\RR\cdot\bb_{2r}$,
hence we have
$$\begin{aligned}
\vv^{(t)}_{2r-1}&=(\cos (t-s))\vv^{(s)}_{2r-1}+(\sin (t-s))\vv^{(s)}_{2r}, \\
\vv^{(t)}_{2r}&=-(\sin (t-s))\vv^{(s)}_{2r-1}+(\cos (t-s))\vv^{(s)}_{2r}.
  \end{aligned}
$$
It follows by these formulas that if  $n,m\in\{2r-1,2r\}$ then
$$\vert (\vv_n^{(t)},\vv_m^{(s)})\vert\le\max\{\cos(t-s),\sin(t-s)\}.$$
Therefore
$$\Vert \alpha \vv_n^{(t)}- \vv_m^{(s)}\Vert^2\ge 2(1-\max\{\cos(t-s),\sin(t-s)\})$$
and this lower bound is independent of $r\ge 1$ and strictly positive (as $0<t-s<\pi/2$).
Thus we see that condition~\eqref{bases2_item2} is satisfied, completing the proof.
\end{proof}

The next corollary provides uncountably many orthogonal bases in $\Hc$
that pairwise satisfy the requirements of Corollary~\ref{bases1}.
In the proof of Corollary~\ref{conj} we will actually need only one such pair. 
However the full power of the following corollary will be needed later 
in order to prove that there exist
infinitely many conjugacy classes of Cartan subalgebras of a nonzero operator ideal.

\begin{corollary}\label{bases2}
Fix any orthonormal basis $\bb=\{b_n\}_{n\ge 1}$ of $\Hc$. 
There exists a family of vectors $\{\vv^{(t)}_n\mid n\in\NN,\, 0\le t<\pi/2\}$
with the following properties:
\begin{enumerate}[(i)]
\item\label{bases2_item1}
For every $t\in[0,\pi/2)$ the subfamily $\vv^{(t)}:=\{\vv^{(t)}_n\}_{n\ge 1}$ is an orthonormal basis in $\Hc$ 
with $\vv^{(0)}=\bb$.
\item\label{bases2_item2}
If $t\ne s\in[0,\pi/2)$, then there exists $\delta>0$ with the property that
for arbitrary scalar sequences $\langle\alpha_n\rangle_{n=1}^\infty$ and $\langle\beta_n\rangle_{n=1}^\infty$
in the unit circle in $\CC$ one has $\Vert \beta_n \vv_n^{(t)}-\alpha_m \vv_m^{(s)}\Vert\ge \delta$
for all $n,m\ge 1$.
\end{enumerate}
\end{corollary}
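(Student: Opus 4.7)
The plan is to derive Corollary~\ref{bases2} directly from Proposition~\ref{bases22} by exploiting the unimodularity of the scalars, with essentially no new content. I would reuse verbatim the family $\{v_n^{(t)} \mid n \in \NN,\ 0 \le t < \pi/2\}$ constructed in the proof of Proposition~\ref{bases22}, so item~\eqref{bases2_item1} of the corollary is literally identical to item~\eqref{bases22_item1} of the proposition and requires no further argument.

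For item~\eqref{bases2_item2}, fix $t\neq s\in[0,\pi/2)$ and let $\delta>0$ be the constant furnished by Proposition~\ref{bases22}\eqref{bases22_item2}. Given arbitrary unimodular scalar sequences $\langle\alpha_n\rangle_{n\ge 1}$ and $\langle\beta_n\rangle_{n\ge 1}$, I would pull the scalar $\beta_n$ (which has modulus one) out of the norm:
\[
\Vert \beta_n v_n^{(t)} - \alpha_m v_m^{(s)}\Vert
= \vert\beta_n\vert\cdot\Vert v_n^{(t)} - \overline{\beta_n}\alpha_m\, v_m^{(s)}\Vert
= \Vert v_n^{(t)} - (\overline{\beta_n}\alpha_m)\, v_m^{(s)}\Vert.
\]
Since $\vert\overline{\beta_n}\alpha_m\vert=1$, applying Proposition~\ref{bases22}\eqref{bases22_item2} with the unit scalar $\alpha:=\overline{\beta_n}\alpha_m$ bounds the right-hand side below by $\delta$, uniformly in $n,m$ and in the choice of the two sequences.

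There is no genuine obstacle here: the corollary is a cosmetic strengthening that promotes a single unit scalar to a pair of coordinate-indexed unit scalars, and this promotion is free because only the ratio $\alpha_m/\beta_n$ enters the norm. All of the substantive geometric content remains packaged inside Proposition~\ref{bases22}, namely that the pairwise rotations by the nonzero angle $t-s\in(-\pi/2,\pi/2)$ keep the acute angles between the lines $\CC\cdot v_n^{(t)}$ and $\CC\cdot v_m^{(s)}$ bounded away from $0$.
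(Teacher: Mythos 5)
Your proposal is correct and matches the paper's approach: the paper's proof of this corollary is simply ``Use Proposition~\ref{bases22},'' and your factoring out of the unimodular scalar $\beta_n$ to reduce the two-sequence statement to the single-scalar statement is exactly the intended (and, as the paper itself notes in Proposition~\ref{bases11}, equivalent) reduction. Nothing further is needed.
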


\begin{proof}
Use Proposition~\ref{bases22}. 
\end{proof}

\begin{corollary}\label{conj}
If $\Ic\supsetneqq\Fc(\Hc)$ is an operator ideal in $\Bc(\Hc)$, 
then $\Dc_{\Ic}^{\sa}\subsetneqq\Ic^{\sa}$.
\end{corollary}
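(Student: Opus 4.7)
The plan is to exhibit a concrete element of $\Ic^{\sa}$ that does not lie in $\Dc_{\Ic}^{\sa}$, using the tilted bases of Proposition~\ref{bases22} to manufacture a self-adjoint operator in $\Ic$ diagonal with respect to a basis incompatible with~$\bb$.

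First I would dispose of the case $\Ic=\Bc(\Hc)$ by citing Remark~\ref{WvN}, which produces a self-adjoint operator in $\Bc(\Hc)$ without pure point spectrum, hence outside $\Dc_{\Ic}^{\sa}$. So I may assume $\Ic$ is a proper ideal, which forces $\Ic\subseteq\Kc(\Hc)$ and hence $U_{\Ic}(\Hc)\subseteq U_{\Kc(\Hc)}$; this inclusion is what will allow me to apply Proposition~\ref{bases11}, whose conclusion is framed in terms of unitaries in $U_{\Kc(\Hc)}$.

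Next, I would fix any $t\in(0,\pi/2)$ and take the rotated basis $\vv^{(t)}$ from Proposition~\ref{bases22}, whose part~\eqref{bases22_item2} guarantees that $\vv^{(t)}$ and $\bb=\vv^{(0)}$ satisfy the separation hypothesis of Proposition~\ref{bases11}. Using $\Ic\supsetneq\Fc(\Hc)$, I would extract from the characteristic set $\Sigma(\Ic)$ a strictly decreasing positive null sequence $\langle\lambda_n\rangle_{n\ge 1}$, and then define $X$ to be the bounded self-adjoint operator determined by $X\vv_n^{(t)}=\lambda_n\vv_n^{(t)}$ for all $n\ge 1$. Since the singular numbers of $X$ are exactly $\langle\lambda_n\rangle\in\Sigma(\Ic)$, we get $X\in\Ic^{\sa}$, and since the $\lambda_n$ are distinct and nonzero the spectral multiplicities of $X$ are all one. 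If $X\in\Dc_{\Ic}^{\sa}$, say $X=VDV^*$ for some $V\in U_{\Ic}(\Hc)$ and $D\in\Dc\cap\Ic$, then $W:=V^*\in U_{\Kc(\Hc)}$ satisfies $WXW^{-1}=D\in\Dc=\Dc(\bb)$, directly contradicting Proposition~\ref{bases11} applied to $X\in\Dc(\vv^{(t)})$.

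The only delicate step is producing the sequence $\langle\lambda_n\rangle$: since $\Ic\supsetneq\Fc(\Hc)$, the set $\Sigma(\Ic)$ contains a null sequence with infinitely many strictly positive terms, and by passing to a subsequence and invoking the monotonicity/ideal closure of characteristic sets (Remark~\ref{recall_DFWW04}) one obtains a strictly decreasing positive null sequence still in $\Sigma(\Ic)$. Beyond this minor technicality the argument is a direct application of the machinery built in Propositions~\ref{bases11} and~\ref{bases22}, and the strict inclusion $\Dc_{\Ic}^{\sa}\subsetneq\Ic^{\sa}$ follows.
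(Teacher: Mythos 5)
Your argument is correct and follows essentially the same route as the paper: dispose of $\Ic=\Bc(\Hc)$ via Remark~\ref{WvN}, then for a proper ideal take a tilted basis $\vv^{(t)}$, pick a self-adjoint operator in $\Dc(\vv^{(t)})\cap\Ic$ with spectral multiplicities one, and invoke Proposition~\ref{bases11} to rule out $U_{\Kc(\Hc)}$-diagonalization with respect to $\bb$. The only difference is that you explicitly construct such an operator from a strictly decreasing positive null sequence in $\Sigma(\Ic)$ (which does exist, e.g.\ by hereditarily shrinking the $s$-numbers of any infinite-rank element of $\Ic$), a detail the paper leaves implicit.
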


\begin{proof}
The case $\Ic=\Bc(\Hc)$ was already settled in Remark~\ref{WvN},
so we may assume $\Fc(\Hc)\subsetneqq\Ic\subsetneqq\Bc(\Hc)$. 
Pick any $t_0\in(0,\pi/2)$ and consider the
corresponding orthonormal basis $v:=\vv^{(t_0)}=\{\vv^{(t_0)}_n\}_{n\ge 1}\ne\bb$
provided by Corollary~\ref{bases2}\eqref{bases2_item1}. 
Then choose any self-adjoint operator $X\in\Dc(v)\cap\Ic$ 
and with spectral multiplicities one. 
Then using Proposition~\ref{bases11}, 
one obtains $WXW^{-1}\not\in\Dc$ for all $W\in U_{\Kc(\Hc)}$. 
So, also for all $W\in U_{\Ic}(\Hc)$ in particular. 
The latter means $X\not\in\Dc_{\Ic}^{\sa}$.  
\end{proof}

Observe choosing $X\ge 0$ in the above proof yields the stronger 
$\Dc_{\Ic}^{\sa}\cap\Ic^{+}\subsetneqq\Ic^{\sa}$. 

Corollary~\ref{conj} proves there are self-adjoint operators in $\Ic$ that are not $U_{\Ic}(\Hc)$-diagonalizable. 
Hence the following. 

\subsection*{A necessary condition for $U_{\Ic}(\Hc)$-diagonalization} \hfill 
\\
For the following theorem we recall that for any operator ideal $\Ic$ in $\Bc(\Hc)$,
the operator ideal $\Ic^2$ is defined as the linear span of the operator set $\{TS\mid T,S\in\Ic\}$ 
and so one has $\Ic^2\subseteq\Ic$. 

\begin{theorem}\label{July6}
Let $\Ic$ be any operator ideal in $\Bc(\Hc)$. 
If an operator $X\in\Ic$ is $U_{\Ic}(\Hc)$-diagonalizable, that is, $WXW^{-1}=D$ for some operators $W\in U_{\Ic}(\Hc)$ and $D\in\Dc$, 
then $X$ is a normal operator and $X-D\in\Ic^2$.  
\end{theorem}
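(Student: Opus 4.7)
The plan is to exploit the specific form of the diagonalizing unitary $W\in U_{\Ic}(\Hc)$, namely $W=\1+K$ with $K\in\Ic$, and expand the conjugation $WXW^{-1}=D$ algebraically. Once that expansion is written out, every term other than $D$ should manifestly be a product of two elements of $\Ic$, which is exactly what membership in $\Ic^2$ requires.

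\emph{Normality.} Since $W\in U(\Hc)$, one has $W^{-1}=W^*$, so $X=W^*DW$. The operator $D\in\Dc$ is diagonal, hence normal, and unitary conjugation preserves normality; consequently $X$ is normal. This step is essentially a one-liner and carries no difficulty.

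\emph{The membership $X-D\in\Ic^2$.} First I would observe that $D\in\Ic$: indeed $D=WXW^{-1}$ with $X\in\Ic$ and $W,W^{-1}\in\Bc(\Hc)$, so the two-sided ideal property of $\Ic$ forces $D\in\Ic$ (and hence $D\in\Dc\cap\Ic$). Now write $W=\1+K$ with $K\in\Ic$, so that $W^*=\1+K^*$ with $K^*\in\Ic$. Expanding gives
\begin{equation*}
X=W^*DW=(\1+K^*)D(\1+K)=D+DK+K^*D+K^*DK,
\end{equation*}
hence
\begin{equation*}
X-D=DK+K^*D+K^*DK.
\end{equation*}
Since $D,K,K^*$ all belong to $\Ic$, each of $DK$ and $K^*D$ is a product of two elements of $\Ic$, hence lies in $\Ic^2$ by the very definition of $\Ic^2$ recalled just before the theorem. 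For the cubic term I would group it as $K^*D\cdot K$: the factor $K^*D$ lies in $\Ic^2\subseteq\Ic$, and then multiplying by $K\in\Ic$ keeps the product in $\Ic\cdot\Ic\subseteq\Ic^2$. Summing the three, $X-D\in\Ic^2$.

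Honestly, there is no substantive obstacle here: the proof is purely algebraic once one uses the characterization $W=\1+K$ for $W\in U_{\Ic}(\Hc)$. The only subtlety worth flagging is the preliminary remark that $D$ itself lies in $\Ic$, which is what makes each summand genuinely a \emph{product of two ideal elements} rather than merely an ideal element multiplied by something bounded. That observation is what upgrades the trivial conclusion $X-D\in\Ic$ to the sharper $X-D\in\Ic^2$ stated in the theorem.
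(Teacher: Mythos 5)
Your proof is correct and is essentially identical to the paper's: both write $W=\1+K$ with $K\in\Ic$, expand $X=(\1+K^*)D(\1+K)=D+K^*D+DK+K^*DK$, and use the key observation that $D=WXW^{-1}\in\Ic$ so that each remaining summand is a product of elements of $\Ic$. The normality remark you add is the same one-line argument the paper leaves implicit.
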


\begin{proof}
Note that $X=W^{-1}DW$. 
Moreover, since $W\in U_{\Ic}(\Hc)$, we have $W=\1+K$ for some $K\in\Ic$, 
and hence $W^{-1}=W^*=\1+K^*$.
Therefore
$$\begin{aligned}
X
&=W^{-1}DW \\
&=(\1+K^*)D(\1+K) \\
&=D+K^*D+DK+K^*DK.
\end{aligned}$$
Since $D=WXW^{-1}\in W\Ic W^{-1}=\Ic$ and $K\in\Ic$,
one obtains $X-D\in \Ic^2$.
\end{proof}

\begin{remark}
\normalfont
There is a little more to Theorem~\ref{July6} than meets the eye. 
Reframing it creates a context for a possible converse as follows. 

Suppose $X\in\Ic$ is normal so that $WXW^{-1}=D$ for some $W\in U(\Hc)$ and $D\in\Dc$.  
Then $X-D\in\Ic^2$. 
The converse question is: 
Does $X-D\in\Ic^2$ imply $W$ can be chosen in $U_{\Ic}(\Hc)$? 
The answer to this is no. 

The reason for this is that, fixing an operator~$X\in\Dc(\vv)\cap\Ic^2$, 
$\vv\ne\bb$, using Proposition~\ref{bases11}, 
$WXW^{-1}\not\in\Dc$ for all $W\in U_{\Kc(\Hc)}$  
(hence also for all $W\in U_{\Ic}(\Hc)$). 
But then $X,D\in\Ic^2$, so $X-D\in\Ic^2$. 
But $WXW^{-1}\not\in\Dc$ hence $WXW^{-1}\ne D$ for any $W\in U_{\Ic}(\Hc)$ in particular. 
\end{remark}

\subsection*{A sufficient condition for $U_{\Sg_2(\Hc)}$-diagonalizability}\hfill 
\begin{remark}\label{suffcond}
\normalfont
For the sake of completeness, let us mention that in the case when $\Ic=\Sg_2(\Hc)$ is the Hilbert-Schmidt ideal,
some sufficient conditions
for $U_{\Ic}(\Hc)$-diagonalizability were provided in \cite[Th. 1]{Hi85} 
as follows.

Let $X\in\Ic^{\sa}$ and denote $x_{ij}=(X\bb_j, \bb_i)$ for all $i,j\ge 1$. 
If there exist $\rho,s\in\RR$ where $0<\rho<1$ and $0<s\le 3(1-\rho)/100$ such that
$$(\forall j\ge1)\quad \vert x_{j+1,j+1}\vert\le\rho\vert x_{jj}\vert$$
and
$$(\forall i,j\ge 1,\ i\ne j)\quad \vert x_{ij}\vert^2\le\frac{s^2}{(ij)^2}\cdot\vert x_{ii}x_{jj}\vert $$
then $WXW^{-1}\in\Dc$ for some $W\in U_{\Ic}(\Hc)$.

The above sufficient conditions for $U_{\Ic}(\Hc)$-diagonalizability 
are by no means necessary. 
More precisely, by using Corollary~\ref{conj} for $\Ic=\Sg_2(\Hc)$, 
one can find $X\in\Ic^{\sa}\setminus\Dc_{\Ic}^{\sa}$, 
and for such a self-adjoint operator $X$ there exist no $\rho,s\in\RR$ satisfying the above conditions, 
since $X$ fails to be $U_{\Ic}(\Hc)$-diagonalizable. 
\end{remark}
  
\subsection*{More on the size of $\Dc_{\Ic}$ under a certain homomorphism of triple systems}\hfill 

In the case when $\Ic^2\ne \Ic$, Proposition~\ref{July6_cor}\eqref{July6_cor_item3} below is related to
the part of Question~\ref{second_quest} on $\Dc_{\Ic}^{\sa}\subsetneqq\Ic^{\sa}$, 
in as much as it provides some information on the size of the set $\Dc_{\Ic}$. 

\begin{proposition}\label{July6_cor}
There exists an injective linear mapping
$\Pi\colon\Bc(\Hc)\to\Bc(\Hc)$ with the properties:
\begin{enumerate}[(i)] 
\item\label{July6_cor_item0}
$\Pi$ is a homomorphism of triple systems, that is,
for all $R,S,T\in\Bc(\Hc)$ we have $\Pi(RST)=\Pi(R)\Pi(S)\Pi(T)$.
\item\label{July6_cor_item0.5}
If $S,T\in\Bc(\Hc)$, then $ST=TS$ if and only if $\Pi(S)\Pi(T)=\Pi(T)\Pi(S)$.
\item\label{July6_cor_item1}
For every $T\in\Bc(\Hc)$, $\Pi(T^*)=\Pi(T)^*$.
\item\label{July6_cor_item2}
$\Bc(\Hc)^{+}\cap\Ran\Pi=\{0\}$, i.e., $\Ran\Pi$ contains no nonzero positive operators.
\item\label{July6_cor_item2.5}
If $\Ic_1$ and $\Ic_2$ are any operator ideals in $\Bc(\Hc)$,
then $\Ic_1\subseteq\Ic_2$ if and only if $\Pi(\Ic_1)\subseteq\Pi(\Ic_2)$.
\item\label{July6_cor_item3}
For every operator ideal $\Ic$ in $\Bc(\Hc)$ we have $\Pi(\Ic)\subseteq\Ic$, $\Pi^{-1}(\Ic)\subseteq\Ic$, and
$\Pi^{-1}(\Dc_{\Ic})\subseteq\Ic^2$, 
where for any $\Pi^{-1}(\Ac):=\{T\in\Bc(\Hc)\mid \Pi(T)\in\Ac\}$ for any $\Ac\subseteq\Bc(\Hc)$.
\end{enumerate}
\end{proposition}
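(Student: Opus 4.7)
The plan is to define $\Pi$ as an off-diagonal doubling with respect to a decomposition $\Hc=\Hc_1\oplus\Hc_2$ compatible with the fixed basis~$\bb$. Let $\Hc_1:=\overline{\spann\{\bb_{2n-1}\mid n\ge 1\}}$ and $\Hc_2:=\overline{\spann\{\bb_{2n}\mid n\ge 1\}}$ with corresponding orthogonal projections $P_1,P_2$, and let $V_i\colon\Hc\to\Hc_i$ be the unitaries determined by $V_1\bb_n=\bb_{2n-1}$ and $V_2\bb_n=\bb_{2n}$. Set
$$\Pi(T):=V_1TV_2^*+V_2TV_1^*,$$
which is purely off-diagonal in the block decomposition of $\Hc$. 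Linearity and injectivity of $\Pi$ are immediate, and $\Pi(T^*)=\Pi(T)^*$ is direct. Using $V_i^*V_j=\delta_{ij}\1$, one computes $\Pi(R)\Pi(S)=V_1RSV_1^*+V_2RSV_2^*$, from which~(i) follows at once, and~(ii) follows after noting that $X\mapsto V_1XV_1^*+V_2XV_2^*$ is injective (so the commutator $ST-TS$ can be read off from $\Pi(S)\Pi(T)-\Pi(T)\Pi(S)$).

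For~(iv), $\Pi(T)\ge 0$ forces $T=T^*$ via~(iii) and injectivity; the resulting quadratic form evaluates to $(\Pi(T)\xi,\xi)=2\,\text{Re}(T\eta_2,\eta_1)$ with $\eta_i:=V_i^*P_i\xi$, and this takes both signs for suitable $\xi$ unless $T=0$. Property~(v) reduces to its reverse implication, which is immediate from injectivity. For the first two assertions of~(vi), the calculation $\Pi(T)^*\Pi(T)=V_1T^*TV_1^*+V_2T^*TV_2^*$ shows that the singular value sequence of $\Pi(T)$ is that of $T$ with each entry repeated twice; since characteristic sets of operator ideals are closed under such doubling and under passing to subsequences, $\Pi(T)\in\Ic\iff T\in\Ic$ for every operator ideal~$\Ic$, giving both $\Pi(\Ic)\subseteq\Ic$ and $\Pi^{-1}(\Ic)\subseteq\Ic$.

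The main obstacle is the third assertion of~(vi), namely $\Pi^{-1}(\Dc_{\Ic})\subseteq\Ic^2$. Suppose $\Pi(T)\in\Dc_{\Ic}$, so $\Pi(T)=WDW^*$ for some $W\in U_{\Ic}(\Hc)$ and $D\in\Dc\cap\Ic$. By Theorem~\ref{July6}, $Z:=\Pi(T)-D\in\Ic^2$. The crucial observation is that $\Pi(T)$ has purely off-diagonal block form, so $P_i\Pi(T)P_i=0$ for $i=1,2$; hence $P_iDP_i=-P_iZP_i$, and each $P_iZP_i$ lies in $\Ic^2$ because $\Ic^2$ is an ideal and $P_i\in\Bc(\Hc)$. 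Since each $P_i$ is a sum of rank-one spectral projections from~$\bb$ while $D$ is $\bb$-diagonal, one has $D=P_1DP_1+P_2DP_2$, and therefore $D\in\Ic^2$. Consequently $\Pi(T)=D+Z\in\Ic^2$, and applying the already-established $\Pi^{-1}(\Ic)\subseteq\Ic$ with $\Ic^2$ in place of $\Ic$ finally yields $T\in\Ic^2$, completing the proof.
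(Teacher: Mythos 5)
Your proposal is correct and is essentially the paper's own proof: your $\Pi(T)=V_1TV_2^*+V_2TV_1^*$ is exactly the paper's $\Delta^{-1}\bigl(\begin{smallmatrix}0&T\\T&0\end{smallmatrix}\bigr)\Delta$ written with isometries instead of a $2\times 2$ block picture, and the key step is the same appeal to Theorem~\ref{July6}. The only cosmetic difference is in the last step, where you isolate the diagonal part $D=P_1DP_1+P_2DP_2$ of $\Pi(T)-Z$ by compression, while the paper isolates the off-diagonal part of $\widetilde X-D''$ by conjugating with $\1\oplus(-\1)$ and subtracting — two equivalent manipulations.
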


\begin{proof}
Express the vectors in $\widetilde{\Hc}:=\Hc\oplus\Hc$ as two vertically listed column vectors and the operators
in $\Bc(\widetilde{\Hc})$ as $2\times 2$ operator matrices with entries in $\Bc(\Hc)$.
Define in terms of the fixed basis $\bb=\{b_n\}_{n\ge 1}$ of~$\Hc$ 
the unitary operator $\Delta\colon \Hc\to\widetilde{\Hc}$
as
$$\Delta(\bb_{2n-1})=
\begin{pmatrix}\bb_n \\ 0\end{pmatrix}
\text{ and }
\Delta(\bb_{2n})=
\begin{pmatrix} 0 \\ \bb_n \end{pmatrix}
\text{ for all } n\ge1.$$
Define the operator ideal $\widetilde{\Ic}:=\Delta\Ic\Delta^{-1}$ in $\Bc(\widetilde{\Hc})$.
Recall that unitary operators between Hilbert spaces in this way preserve $s$-numbers and hence preserve operator ideals,
that is, $X\mapsto \Delta X\Delta^{-1}$ maps the class of operator ideals in $\Bc(\Hc)$
one-to-one inclusion preserving onto the class of operator ideals in $\Bc(\widetilde{\Hc})$.
We will check that the injective linear mapping
$$\Pi\colon\Bc(\Hc)\to\Bc(\Hc),\quad
T\mapsto \Delta^{-1}\begin{pmatrix} 0 & T\\
T & 0\end{pmatrix}\Delta $$
satisfies the conditions required in the statement.
Conditions~\eqref{July6_cor_item0}--\eqref{July6_cor_item1} and \eqref{July6_cor_item2.5} are straightforward 
(for \eqref{July6_cor_item1} note that $\Delta^*=\Delta^{-1}$ since $\Delta$ is unitary). 

Condition~\eqref{July6_cor_item2}
is a direct consequence of \cite[Lemma 6.3(i)]{BRT07} 
or can be proved directly by noting that, since $\Delta$ is unitary, 
it suffices to check that for every nonzero $T\in\Bc(\Hc)$ 
the operator $\begin{pmatrix}0 & T \\ T & 0\end{pmatrix}$ fails to be positive. 
And this can be 
 obtained by using the special unitary 
$U = \frac{1}{\sqrt 2} \begin{pmatrix} \hfil \1 & -\1 \\ \hfil \1 & \hfill \1\end{pmatrix}$ 
for which 
$U\begin{pmatrix}\hfill 0 & T \\ T & 0\end{pmatrix} U^* = 
\begin{pmatrix} -T & 0 \\ \hfill 0 & T \end{pmatrix}$,  
and it is clear that the latter $2\times 2$ block matrix is neither positive nor negative if $T\ne0$.

To check that Condition~\eqref{July6_cor_item3} is also satisfied,
note that $\Pi(\Ic)\subseteq\Ic$ and $\Pi^{-1}(\Ic)\subseteq\Ic$ follow directly by the definition of $\Pi$.
Next, for $A\in\Pi^{-1}(\Dc_{\Ic})$, one needs to show $A\in\Ic^2$.
Define
$$\widetilde{X}
:=\begin{pmatrix}
0 & A \\
A & 0
\end{pmatrix}\in\Bc(\widetilde{\Hc})$$
and set $X:=\Delta^{-1}\widetilde{X}\Delta$.
Since $X=\Pi(A)\in\Dc_{\Ic}$, one has $W=\1+K\in U_{\Ic}(\Hc)$ for which 
$(I+K)X(I+K^*) = D'\in\Dc$ and $K \in\Ic$ implying
\begin{equation}\label{August28}
(\Delta(\1+K)\Delta^{-1})
\underbrace{(\Delta X\Delta^{-1})}_{=\widetilde{X}}
(\Delta(\1+K^{*})\Delta^{-1}) = \Delta D'\Delta^{-1}.
\end{equation} 
Then set $U_{\widetilde{\Ic}}(\widetilde{\Hc}):= U(\widetilde{\Hc}) \bigcap (\1 + \widetilde{\Ic})$
where $U(\widetilde{H})$ denotes the group of unitary operators on $\widetilde{\Hc}$.
Notice that 
$$\begin{aligned}
\Delta(\1+K)\Delta^{-1} & = \1 + \Delta K \Delta^{-1}
\in U_{\widetilde{\Ic}}(\tilde H),\\
\widetilde{X}&=\Delta X\Delta^{-1}
\in \widetilde{\Ic}, \\
\Delta(\1+K^{*})\Delta^{-1}&= \1 + \Delta K^{*}\Delta^{-1}
  \in U_{\widetilde{\Ic}}(\widetilde{\Hc}).
\end{aligned}$$
Also notice that $\Delta D'\Delta^{-1}$ is a diagonal operator in $B(\widetilde{\Hc})$ 
relative to the basis 
$\Bigl\{\begin{pmatrix} b_n\\ 0\end{pmatrix},\begin{pmatrix} 0\\ b_n\end{pmatrix}\mid n\ge 1\Bigr\}$
because of the particular way the operator $\Delta$ was earlier defined.
So from Equation~\eqref{August28} one obtains for $\widetilde{X}\in\widetilde{\Ic}$,
a unitary operator $\widetilde{W} := \1 + \Delta K \Delta^{-1}\in \mathcal U_{\widetilde{\Ic}}(\widetilde{\Hc})$ for which $\widetilde{W}\widetilde{X} \widetilde{W}^{-1} = \Delta D'\Delta^{-1}=: D''$.

Since $\Pi(A)$ is a normal compact operator, $\widetilde{X}$ is a normal compact operator.
So applying Theorem \ref{July6} in the $B(\widetilde{\Hc})$ setting to
$\widetilde{W}\widetilde{X} \widetilde{W}^{-1}$ relative to the basis 
$\Bigl\{\begin{pmatrix} b_n\\ 0\end{pmatrix},\begin{pmatrix} 0\\ b_n\end{pmatrix}\mid n\ge 1\Bigr\}$, 
one obtains $\widetilde{X} - D'' \in \widetilde{\Ic}^{2}$.
Diagonals in the $\Bc(\widetilde{\Hc})$ setting are regarded as direct sums of diagonals in $\Bc(\Hc)$. 
Let us write 
$$
D''
= \begin{pmatrix}
D_{1}'' &  0            \\
0      &  D_{2}''
\end{pmatrix}, 
\text{ hence }
\widetilde{X} - D''
= 
\begin{pmatrix}
-D_{1}'' & \hfil  A  \\
      A & -D_{2}''
\end{pmatrix}
\in \tilde{\mathcal I}^{2}.
$$
For $\widetilde{S}:=\begin{pmatrix} \1 & \hfill 0 \\ 0 & -\1 \end{pmatrix}\in U(\widetilde{\Hc})$ one has
$$\begin{pmatrix}
0 &  2A            \\
2A&0                 
\end{pmatrix}
=
\begin{pmatrix}
-D^{''}_{1} &  A            \\
 A&-D^{''}_2               
\end{pmatrix}
- \widetilde{S}^*\begin{pmatrix}
-D_{1}'' &   A  \\
      A & -D_{2}''
\end{pmatrix}
\widetilde{S}
= 
\widetilde{X} - D''-\widetilde{S}^*(\widetilde{X} - D'')\widetilde{S}
\in\widetilde{\Ic}^{2} 
$$
and therefore $A \in\Ic^{2}$, which completes the proof.
\end{proof}

\begin{remark}\label{info}
\normalfont 
As mentioned immediately prior to Proposition~\ref{July6_cor}, 
in the case when $\Ic^2\ne \Ic$, Proposition~\ref{July6_cor}\eqref{July6_cor_item3} is related to
the part of Question~\ref{second_quest} on whether or not $\Dc_{\Ic}^{\sa}\subsetneqq\Ic^{\sa}$, 
in as much as it provides some information on the size of the set $\Dc_{\Ic}$. 
We can then begin to develop intuition on how 
$\Dc_{\Ic}^{\sa}$  
sits inside of the real linear space $\Ic^{\sa}$ 
beyond this set inequality 
in Corollary~\ref{conj}. 
As for instance how the non-additive set 
$\Dc_{\Ic}^{\sa}$ 
(non-additive at least for $\Ic=\Bc(\Hc)$, cf. Remark~\ref{WvN})
sits inside the real linear space $\Ic^{\sa}$. 

More precisely as related to $\Pi$, it follows by Proposition~\ref{July6_cor}\eqref{July6_cor_item3} 
and injectivity respectively that 
$$\Dc_{\Ic}\cap\Pi(\Ic)\subseteq\Pi(\Ic^2)\subsetneqq\Pi(\Ic).$$
Moreover, 
$$\{0\}\ne\Dc_{\Ic}^{\rm sa}\cap\Pi(\Ic^{\rm sa})\subsetneqq\Pi(\Ic^{\rm sa}).$$
Indeed, since the mapping $\Pi\colon\Ic\to\Ic$ is linear and injective 
and preserves selfadjoints (see Proposition~\ref{July6_cor}\eqref{July6_cor_item1}), 
the image of $\Ic^{\sa}$, $\Pi(\Ic^{\sa})$, 
is an infinite-dim\-ensio\-nal real linear subspace of $\Ic^{\sa}$ 
with the property that the real linear subspace spanned by $\Dc_{\Ic}^{\sa}\cap\Pi(\Ic^{\sa})$
is a proper real linear subspace of $\Pi(\Ic^{\sa})$.  
It is proper because $\Dc_{\Ic}^{\rm sa}\cap\Pi(\Ic^{\rm sa})=\Pi(\Ic^{\rm sa})$ implies by 
Proposition~\ref{July6_cor}\eqref{July6_cor_item3} that $\Ic^2\supseteq\Pi^{-1}(\Dc_{\Ic}^{\rm sa})\cap\Ic^{\rm sa}
=\Ic^{\rm sa}$ contradicting $\Ic^2\not\supset\Ic^{\rm sa}$.  
Note also that we have $\Dc_{\Ic}^{\sa}\cap\Pi(\Ic^{\sa})\ne\{0\}$ since by \eqref{df}
we have $\{0\}\ne\Fc(\Hc)^{\sa}=\Dc_{\Fc(\Hc)}^{\sa}\subseteq \Dc_{\Ic}^{\sa}$,
hence by Proposition~\ref{July6_cor}\eqref{July6_cor_item3},  
$\{0\}\subsetneqq\Pi(\Fc(\Hc)^{\sa})\subseteq\Fc(\Hc)^{\rm sa}\subseteq\Dc_{\Ic}^{\sa}\cap\Pi(\Ic^{\sa})$.

As another geometric feature,
it follows by Proposition~\ref{July6_cor}\eqref{July6_cor_item2} that the aforementioned real linear subspace $\Pi(\Ic^{\sa})$ 
meets the positive cone $\Ic^{+}$ only at its vertex~$0$.
%\qed
\end{remark}

\begin{example}
\normalfont
If $0< p<\infty$ and $\Ic=\Sg_p(\Hc)$ is the Schatten $p$-ideal, then
Proposition~\ref{July6_cor} provides some nontrivial information on the class $\Dc_{\Ic}$. 
In fact, we have $\Ic^2=\Sg_{p/2}(\Hc)$, hence
$\Ic^2\ne \Ic$ and so the above Remark~\ref{info} applies.
%\qed
\end{example}

\section{Conjugacy classes of Cartan subalgebras of operator ideals}

Cartan subalgebras of various types of infinite-dimensional Lie algebras
have been studied extensively  
(see for instance \cite{Sch60}, \cite{Sch61}, \cite{BP66}, \cite{dlH72}, \cite{St75}, \cite{NP03}, \cite{Re08},
and the references therein).
In this section we focus on
Cartan subalgebras of operator ideals. 
More specifically, we first wish to raise the classification problem for the conjugacy classes of these Cartan subalgebras.
(See \cite[Rem. 4.2]{BPW13b} for the case of $\Bc(\Hc)$.)     
Our main result, the case of proper ideals of $\Bc(\Hc)$, 
is recorded as Theorem~\ref{uncountable} on the uncountability of such conjugacy classes. 

\subsection*{The set of conjugacy classes of Cartan subalgebras}
The relevant facts concerning
this issue in the case of the operator ideal $\Ic=\Bc(\Hc)$ can be found in \cite[Rem. 4.2]{BPW13b}.  
For the main general Theorem~\ref{uncountable} below for arbitrary proper ideals, 
it is convenient to introduce the following.

\begin{definition}\label{decomp}
\normalfont
For every proper operator ideal $\Ic$ in $\Bc(\Hc)$ denote by $\Cartan(\Ic)$
the set of all \textit{Cartan subalgebras} of $\Ic$, 
that is, the set of all maximal abelian self-adjoint subalgebras
of the associative $*$-algebra~$\Ic$.

Furthermore, denote by $\Decomp(\Hc)$ the set of
all sets of one-dimensional, mutually orthogonal subspaces of the Hilbert space $\Hc$ whose linear span is dense in $\Hc$. 
(I.e., all one-dimensional subspace orthogonal decompositions of~$\Hc$.)  
That is, for every $\Sc\in\Decomp(\Hc)$ we have the orthogonal direct sum decomposition $\Hc=\bigoplus\limits_{\Vc\in\Sc}\Vc$
with one-dimensional summands~$\Vc$.
%\qed
\end{definition}

\begin{remark}\label{roots}
\normalfont
We should point out that this Definition~\ref{decomp} terminology 
agrees with that introduced in \cite[Sect. I.3]{dlH72} 
for the Schatten ideals, the finite-rank operator ideal, and for $\Bc(\Hc)$,  
where the Cartan subalgebras were actually defined as 
maximal abelian self-adjoint subalgebras of a complex \textit{Lie algebra}~$\gg$ 
endowed with an antilinear involution. 
If $\gg$ is finite-dimensional, it can be recovered from any of its Cartan subalgebras 
$\hg$ as follows.  
Denote the eigenspaces for the adjoint action of $\hg$ on $\gg$ by 
$$\gg^\alpha:=\{X\in\gg\mid(\forall H\in\hg)\quad [H,X]=\alpha(H)X\}
\text{ for }\alpha\colon\hg\to\CC\text{ linear}$$
and consider the corresponding set of roots 
$\Delta(\gg,\hg):=\{\alpha\in\hg^*\mid \gg^\alpha\ne\{0\}\}$. 
Then $\hg=\gg^0=\{X\in\gg\mid[\hg,X]=\{0\}\}$ and there exists the root space decomposition 
\begin{equation}\label{roots_eq1}
\gg=\hg\oplus\bigoplus_{\alpha\in\Delta(\gg,\hg)\setminus\{0\}}\gg^\alpha
\end{equation}
by \cite[Equations (2.16) and (2.22)]{Kn02}. 
If however $\gg$ is infinite-dimensional, then \eqref{roots_eq1} may not 
hold true, 
and additional structure on $\gg$ is necessary in order to formulate useful versions of that 
root space decomposition. 
For instance, if $\gg=\Mc$ is a $C^*$- or von Neumann algebra,  
then the regularity property of a masa $\hg=\Cc$ as expressed in \eqref{reg} 
should be viewed as a variant of root space decomposition of $\Mc$ with respect to~$\Cc$, 
in as much as in both situations the ambient algebra is recovered by using only the Cartan subalgebra under consideration. 

In the present situation of the Lie algebra defined by the operator ideal $\Ic$ with the Lie bracket $[X,Y]=XY-YX$,
a Cartan subalgebra of $\Ic$ is a linear self-adjoint subspace $\Cc$ of $\Ic$ which
is maximal under the condition that for all $X,Y\in\Cc$ we have $[X,Y]=0$.
The maximality condition requires however that $\Cc$ is actually a subalgebra of the associative algebra $\Ic$. 
That is, for all $X,Y\in\Cc$ one necessarily has $XY\in\Cc$, since otherwise one could replace $\Cc$
by the associative algebra it generates, 
and one would thus obtain a larger linear self-adjoint subspace
consisting of mutually commuting operators. 
We thus see that the elements of $\Cartan(\Ic)$ are precisely Cartan subalgebras of $\Ic$ 
as defined in our Introduction.
%\qed
\end{remark}

Theorem~\ref{suppl6} below shows that the Cartan subalgebras of complete separable normed ideals
enjoy a regularity property which is quite similar to that of the Cartan subalgebras of $C^*$-algebras
(see Remark~\ref{roots} and \eqref{reg} in the Introduction). 
Its proof is based on the following simple fact   
which in particular generalizes \cite[Ch. I, Prop. 3A]{dlH72}
from the ideal of finite-rank operators to an arbitrary ideal.

\begin{proposition}\label{descr}
Let $\Ic$ be any proper operator ideal in $\Bc(\Hc)$.
The mapping
$$\Psi\colon\Decomp(\Hc)\to\Cartan(\Ic), \quad \Sc\mapsto\{T\in\Ic\mid(\forall\Vc\in\Sc)\quad T\Vc\subseteq\Vc\} $$
is one-to-one and onto.
For every $\Cc\in\Cartan(\Ic)$, the set $\Psi^{-1}(\Cc)$ is the set of 
range spaces of minimal projections of the commutative $C^*$-algebra of compact operators generated by $\Cc$.
\end{proposition}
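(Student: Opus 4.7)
The plan is to proceed in three steps: verify that $\Psi$ takes values in $\Cartan(\Ic)$, establish injectivity together with the description of $\Psi^{-1}$, and finally prove surjectivity via the structure theorem for commutative $C^*$-algebras of compact operators. Throughout, the crucial standing observation is that since $\Fc(\Hc)\subseteq\Ic$, every rank-one orthogonal projection on $\Hc$ belongs to $\Ic$.

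First I would check that $\Psi(\Sc)\in\Cartan(\Ic)$ for each $\Sc\in\Decomp(\Hc)$. The set $\Psi(\Sc)$ consists of those operators in $\Ic$ that are simultaneously diagonal in any orthonormal basis obtained by selecting a unit vector from each $\Vc\in\Sc$, so it is clearly a commutative $*$-subalgebra of $\Ic$. For its maximality, note that for each $\Vc\in\Sc$ the rank-one orthogonal projection $P_\Vc$ onto $\Vc$ lies in $\Fc(\Hc)\subseteq\Ic$, hence in $\Psi(\Sc)$; if $S=S^*\in\Ic$ commutes with every element of $\Psi(\Sc)$, then in particular $S$ commutes with each $P_\Vc$, forcing $S\Vc\subseteq\Vc$ for all $\Vc\in\Sc$, so $S\in\Psi(\Sc)$ already. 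For injectivity (and the final assertion of the proposition), observe that once $\Cc=\Psi(\Sc)$ is formed, the projections $\{P_\Vc\mid\Vc\in\Sc\}$ are precisely the rank-one projections in $\Cc$, equivalently the minimal projections of the commutative $C^*$-subalgebra $\overline{\Cc}\subseteq\Kc(\Hc)$ obtained as the operator-norm closure of $\Cc$; the collection $\Sc$ is then recovered as the set of their ranges.

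The main step is surjectivity. Given $\Cc\in\Cartan(\Ic)$, form $\overline{\Cc}\subseteq\Kc(\Hc)$, a commutative $C^*$-algebra of compact operators. By the structure theorem, such an algebra admits a family $\{P_\lambda\}_{\lambda\in\Lambda}$ of mutually orthogonal minimal projections, each of finite rank, so that every $T\in\overline{\Cc}$ expands as a norm-convergent sum $T=\sum_\lambda c_\lambda(T)P_\lambda$ with $c_\lambda(T)\to 0$. Two facts then need to be established using the maximality of $\Cc$ itself (not of $\overline{\Cc}$): (a) each $P_\lambda$ has rank one, and (b) $\sum_\lambda P_\lambda=\1$. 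For (a), if some $P_\lambda$ had rank at least two, pick any rank-one subprojection $Q\le P_\lambda$; since every $T\in\overline{\Cc}$ acts on $\Ran P_\lambda$ as the scalar $c_\lambda(T)$, one gets $TQ=QT=c_\lambda(T)Q$ for all $T\in\Cc$. Moreover $Q=Q^*\in\Fc(\Hc)\subseteq\Ic$ and $Q\notin\Cc$ (else $P_\lambda$ would not be minimal in $\overline{\Cc}$), so the $*$-subalgebra $\Cc+\CC\cdot Q\subseteq\Ic$ is abelian and strictly larger than $\Cc$, contradicting maximality. For (b), if $E:=\1-\sum_\lambda P_\lambda\ne 0$, then $ET=TE=0$ for every $T\in\overline{\Cc}$, and any rank-one subprojection of $E$ commutes trivially with $\Cc$, yielding the same type of contradiction. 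Setting $\Sc:=\{\Ran P_\lambda\mid\lambda\in\Lambda\}\in\Decomp(\Hc)$, every $T\in\Cc$ preserves each $\Ran P_\lambda$ (since $T$ commutes with $P_\lambda\in\overline{\Cc}$), so $\Cc\subseteq\Psi(\Sc)$, and equality follows from the maximality of $\Cc$ once $\Psi(\Sc)\in\Cartan(\Ic)$ is known from the first step.

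The main obstacle I anticipate is the interplay between $\Cc$ and $\overline{\Cc}$ in steps (a) and (b) of surjectivity: the maximality hypothesis is placed on $\Cc$ inside $\Ic$, while the spectral decomposition yielding the $P_\lambda$ belongs to the $C^*$-algebraic structure of $\overline{\Cc}$. The bridge is precisely the inclusion $\Fc(\Hc)\subseteq\Ic$, which guarantees that the candidate rank-one projection $Q$ (or $Q\le E$) used to contradict maximality actually lies in $\Ic$, together with the scalar identity $TQ=c_\lambda(T)Q$ which keeps the enlarged $*$-subalgebra inside $\Ic$.
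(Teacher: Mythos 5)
Your proof is correct and follows essentially the same route as the paper: maximality of $\Psi(\Sc)$ via the rank-one projections $P_{\Vc}\in\Fc(\Hc)\subseteq\Ic$, and surjectivity/injectivity via the minimal projections of the commutative $C^*$-algebra of compact operators generated by $\Cc$. The only difference is that you spell out the steps the paper calls ``straightforward,'' correctly locating where the maximality of $\Cc$ (not mere commutativity) forces the minimal projections to have rank one and to sum to $\1$.
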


\begin{proof}
We first check that if $\Sc\in\Decomp(\Hc)$ then $\Psi(\Sc)\in\Cartan(\Ic)$.
It is clear that $\Psi(\Sc)$ is an abelian self-adjoint subalgebra of $\Ic$,
so we still have to check that it is maximal with these properties.
To this end let $T\in\Ic$ with $[T,\Psi(\Sc)]=\{0\}$,   
and so it suffices to prove $T\in\Psi(\Sc)$. 
Indeed, for arbitrary $\Vc\in\Sc$, if we denote by $P_{\Vc}$ the orthogonal projection onto $\Vc$,
then $P_{\Vc}$ is a rank-one operator.
Moreover, since $\Sc$ is a set of mutually orthogonal subspaces of $\Hc$,
it follows that each element of $\Sc$ is invariant under $P_{\Vc}$, hence $P_{\Vc}\in\Psi(\Sc)$.
The commutator zero assumption on $T$ then implies $TP_{\Vc}=P_{\Vc}T$.
Since $\Vc\in\Sc$ is arbitrary, we obtain $T\in\Psi(\Sc)$.
We thus see that $\Psi(\Sc)$ is a maximal abelian self-adjoint subalgebra of $\Ic$,
that is, $\Psi(\Sc)\in\Cartan(\Ic)$.

Next, for arbitrary $\Cc\in\Cartan(\Ic)$, denote by $\Phi(\Cc)$
the set of all minimal projections in the commutative $C^*$-algebra of compact operators generated by $\Cc$.
Since every $C^*$-algebra of compact operators is the direct sum of full algebras of compact operators
on mutually orthogonal subspaces of $\Hc$
(see for instance \cite[Th. I.10.8]{Da96}),
it is then straightforward to prove that the dimensions of all these subspaces are~one  
by using the commutativity of $\Cc$, and hence that $\Phi(\Cc)\in\Decomp(\Hc)$.

Moreover, it is not difficult to check that the mappings $\Psi$ and $\Phi$ are inverse to each other.
\end{proof}

\begin{theorem}\label{suppl6}
If $\Ic$ is a complete separable normed ideal and $\Cc\in\Cartan(\Ic)$ with the normalizer
$U_{\Ic,\Cc}(\Hc):=\{W\in U_{\Ic}(\Hc)\mid W\Cc W^{-1}=\Cc\}$,
then the linear span of 
$$\{WD\mid W\in U_{\Ic,\Cc}(\Hc),D\in\Cc\}$$ 
is a dense linear subspace of $\Ic$.
\end{theorem}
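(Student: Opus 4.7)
My plan is to identify $\Cc$ with a diagonal subalgebra via Proposition~\ref{descr}, to exhibit enough finite-permutation unitaries in the normalizer $U_{\Ic,\Cc}(\Hc)$ so that every matrix unit arises as a product $WD$, and then to invoke the density of the finite-rank operators in any complete separable normed ideal.

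First I will apply Proposition~\ref{descr} to obtain $\Sc\in\Decomp(\Hc)$ with $\Cc=\{T\in\Ic\mid T\Vc\subseteq\Vc\text{ for every }\Vc\in\Sc\}$. Choosing a unit vector in each one-dimensional summand of $\Sc$ yields an orthonormal basis $\{e_n\}_{n\ge 1}$ of $\Hc$ with respect to which $\Cc$ is exactly the set of diagonal operators that belong to~$\Ic$. Then I will observe that for every $\sigma\in\SS_\fin$ the permutation unitary $V_\sigma$ attached to this basis satisfies $V_\sigma-\1\in\Fc(\Hc)\subseteq\Ic$, so $V_\sigma\in U_\Ic(\Hc)$, and that conjugation by $V_\sigma$ merely permutes diagonal entries and hence preserves $\Cc$; thus $V_\sigma\in U_{\Ic,\Cc}(\Hc)$.

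Next I will check that every rank-one matrix unit $e_{ij}$ with respect to $\{e_n\}$ already arises as a product $WD$: the diagonal rank-one projection $e_{jj}$ lies in $\Cc$, and for distinct $i,j$ the transposition $\tau=(i,j)\in\SS_\fin$ gives $V_\tau e_{jj}=e_{ij}$ by direct computation, while the diagonal units $e_{ii}$ are handled by $W=\1$. Hence the linear span of $\{WD\mid W\in U_{\Ic,\Cc}(\Hc),\,D\in\Cc\}$ contains all of $\Fc(\Hc)$.

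The proof concludes by invoking the standard fact that in a complete separable normed $\Bc(\Hc)$-ideal the finite-rank operators are norm-dense, i.e.\ $\overline{\Fc(\Hc)}=\Ic$ (this is the Gohberg--Krein separability criterion for symmetrically normed ideals; see \cite[Ch.~III]{GK69}). This last step is the only real subtlety and is exactly where the separability hypothesis on $\Ic$ is used; compare for instance $\Ic=\Bc(\Hc)$, for which $\overline{\Fc(\Hc)}=\Kc(\Hc)\subsetneqq\Ic$ and the conclusion would fail. The remainder of the argument is essentially bookkeeping of which elementary unitaries normalize~$\Cc$.
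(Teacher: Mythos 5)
Your proposal follows essentially the same route as the paper: identify $\Cc$ with the diagonal operators of $\Ic$ relative to an orthonormal basis via Proposition~\ref{descr}, observe that the finite permutation unitaries lie in the normalizer, deduce that the span of $\{WD\}$ contains all finite matrix corners, and finish with the Gohberg--Krein density theorem for separable complete normed ideals. (The paper packages the middle step as the finite-dimensional case of Proposition~\ref{suppl1} applied to $P_n\Ic P_n$, whereas you write out the matrix units $e_{ij}=V_{(i,j)}e_{jj}$ explicitly; these are the same computation.) One inaccuracy to repair: the linear span of the matrix units is the set of operators with only \emph{finitely many nonzero matrix entries} with respect to the chosen basis, which is a proper subset of $\Fc(\Hc)$ --- a rank-one projection onto a vector supported on infinitely many basis elements is finite rank but is not a finite combination of matrix units. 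So the claim that your span ``contains all of $\Fc(\Hc)$'' is false as stated. The correct way to close the argument is not ``$\Fc(\Hc)$ is dense in $\Ic$'' but rather the statement the paper actually cites, \cite[Ch.~III, Th.~6.3]{GK69}: for a separable complete normed ideal, $\Vert T-P_nTP_n\Vert_{\Ic}\to 0$ for every $T\in\Ic$, where $P_n$ is the projection onto the span of the first $n$ basis vectors; since each $P_nTP_n$ has finite matrix support, it lies in your span, and density follows. This is a one-line fix that uses the separability hypothesis in exactly the place you identified, so the overall argument stands.
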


\begin{proof} 
Use Proposition~\ref{descr} to find $\Sc=\{\Vc_n\mid n\ge 1\}\in\Decomp(\Hc)$ 
with $\Psi(\Sc)=\Cc$. 
Then for every $n\ge 1$ pick any $v_n\in\Vc_n$ with $\Vert v_n\Vert=1$. 
In this way we obtain an orthonormal basis $v:=\{v_n\}_{n\ge 1}$ of $\Hc$ 
with the property that $\Cc$ is the set of all operators in $\Ic$ 
that are diagonal with respect to the orthonormal basis~$v$. 
For every integer $n\ge 1$ let $P_n$ be the orthogonal projection
of $\Hc$ onto $\spann\{v_1,\dots,v_n\}$, hence $P_n\in\Fc(\Hc)$.
Also denote
$$\Qc:=\{WD\mid W\in U_{\Ic,\Cc}(\Hc),D\in\Cc\}\subset\Ic.$$ 
By using the classical variant of Proposition~\ref{suppl1} for $\dim\Hc<\infty$,
it is easily checked and in fact is well known that
$$P_n\Ic P_n=\spann(P_n\Qc P_n)\subseteq\spann\,\Qc.$$
(For a more general case see for instance \cite[Th. 3.1.1]{GW09}.)

On the other hand, since $\Ic$ is a complete separable normed ideal, it follows by \cite[Th. 6.3 in Ch. III]{GK69} 
that for every $T\in\Ic$ we have $\lim\limits_{n\to\infty}\Vert T-P_nTP_n\Vert_{\Ic}=0$.
Then the conclusion of the above paragraph shows that $\spann\,\Qc$ is dense in $\Ic$. 
\end{proof}

\begin{remark}[$\Ic$-equivalent bases]\label{equiv_bases}
\normalfont
Let $\Ic$ be an operator ideal in $\Bc(\Hc)$.
Proposition~\ref{descr} suggests to us an equivalence relation on the set of all orthonormal bases in $\Hc$.
Namely, if $e=\{e_n\}_{n\ge 1}$ and $f=\{f_n\}_{n\ge 1}$ are two orthonormal bases in $\Hc$,
then we say they are \textit{$\Ic$-equivalent to each other} if there exists $W\in U_{\Ic}(\Hc)$
with $We_n=f_n$ for all $n\ge 1$.
It seems an interesting problem to provide criteria, involving the vectors only, 
ensuring when two bases are equivalent in this sense,
and to study the corresponding equivalence classes of bases for particular choices of operator ideals.

In the case of the Hilbert-Schmidt ideal $\Ic=\Sg_2(\Hc)$,
it is easily checked that the bases $e$ and $f$ are $\Ic$-equivalent to each other
if and only if 
\begin{equation}\label{probl12}
\sum\limits_{n\ge 1}\Vert e_n-f_n\Vert^2<\infty.
\end{equation}  
Also recall from \cite[Probl. 12]{Ha82} that if $e=\{e_n\}_{n\ge 1}$ and $f=\{f_n\}_{n\ge 1}$ 
are any orthonormal sequences in $\Hc$ satisfying~\eqref{probl12}, 
then sequence $e$ spans $\Hc$ (and hence is an orthonormal basis) 
if and only if so does the sequence~$f$. 

The relationship between this $\Ic$-equivalence and the setting of Proposition~\ref{descr}
relies on the $U_{\Ic}(\Hc)$-equivariant correspondence
$\{e_n\}_{n\ge 1}\mapsto \{\CC e_n\}_{n\ge 1}$
from the set of all orthonormal bases in $\Hc$ onto the set $\Decomp(\Hc)$. 
(A \emph{$G$-equivariant} map $T\colon X\to Y$ is a map for which 
$T(gx)=gTx$ for all $g\in G$ and $x\in X$, for any group $G$ acting on the sets 
$X,Y$, with its corresponding group actions denoted by juxtaposition,
hence $(g,x)\mapsto gx$ and $(g,y)\mapsto gy$, respectively.)
\end{remark}

\begin{remark}
\normalfont
Let $\Ic$ be a proper operator ideal in $\Bc(\Hc)$, $\Sc\in\Decomp(\Hc)$, and define
$$E_{\Sc}\colon\Bc(\Hc)\to\Bc(\Hc),\quad E_{\Sc}(T)=\sum_{\Vc\in\Sc} P_{\Vc}TP_{\Vc}.$$
Since the map $\Psi$ in Proposition~\ref{descr} takes values in $\Cartan(\Ic)$ as indicated in its statement,  
it follows that $\Ic\cap \Ran E_{\Sc}\in\Cartan(\Ic)$.

Moreover one has the characterization  $E_{\Sc}(\Ic) = \Ic^{-{\rm am}} \cap \Ran E_{\Sc}$ where
$\Ic^{-{\rm am}}$ denotes the \textit{arithmetic mean closure} of $\Ic$,
that is, the set of all operators $T\in\Bc(\Hc)$ for which there exists $K\in\Ic$
such that for every integer $n\ge 1$ we have $s_1(T)+\cdots+s_n(T)\le s_1(K)+\cdots+s_n(K)$.
Then $\Ic^{-{\rm am}}\cap\Ran E_{\Sc}=E_{\Sc}(\Ic)$ by \cite[Cor. 4.4]{KW12}.

Consequently, \textit{if the ideal $\Ic$ is arithmetic mean closed,
that is, $\Ic^{-{\rm am}}=\Ic$, then for every $\Sc\in\Decomp(\Hc)$ we have $E_{\Sc}(\Ic)\in\Cartan(\Ic)$}.
%\qed
\end{remark}

\begin{definition}\label{action}
\normalfont
The natural action of the full unitary group $U(\Hc)$ on the set $\Decomp(\Hc)$ is 
$$\alpha\colon U(\Hc)\times\Decomp(\Hc)\to\Decomp(\Hc),\quad
(W,\Sc)\mapsto\alpha_W(\Sc):=\{W(\Vc)\mid \Vc\in\Sc\}$$
and for every $\Sc\in\Decomp(\Hc)$ we define 
\begin{equation}\label{action_eq1}
U(\Hc)_{\Sc}:=\{W\in U(\Hc)\mid \alpha_W(\Sc)=\Sc\}.
\end{equation}
\indent For any proper operator ideal $\Ic$ in $\Bc(\Hc)$ 
the group action of $U(\Hc)$ on $\Cartan(\Ic)$ is: 
$$\beta\colon U(\Hc)\times \Cartan(\Ic)\to\Cartan(\Ic),\quad (W,\Cc)\mapsto\beta_W(\Cc):=W\Cc W^{-1}.$$
\end{definition}

\begin{remark}\label{equivar}
\normalfont
It is easily seen that for every proper ideal $\Ic$ in $\Bc(\Hc)$
the bijective mapping $\Psi$ in Proposition~\ref{descr} is $U(\Hc)$-equivariant,
hence the diagram
$$\begin{CD}
U(\Hc)\times \Decomp(\Hc) @>{\alpha}>> \Decomp(\Hc) \\
@V{\id_{U(\Hc)}\times\Psi}VV @VV{\Psi}V \\
U(\Hc)\times \Cartan(\Ic) @>{\beta}>> \Cartan(\Ic)
\end{CD}$$
commutes. 
This simple fact allows one to derive properties of the action $\beta$ from properties 
of the action $\alpha$. 
For instance, one can draw the following direct consequences for any Cartan subalgebra $\Cc$ of $\Ic$, 
by merely considering their corresponding properties of $\Sc:=\Psi(\Cc)\in\Decomp(\Hc)$: 
\begin{enumerate}[(i)]
\item\label{equivar_item1} 
One has $\{\beta_W(\Cc)\mid W\in U(\Hc)\}=\Cartan(\Ic)$, since clearly $\{\alpha_W(\Sc)\mid W\in U(\Hc)\}=\Decomp(\Hc)$. 
\item\label{equivar_item2} 
One has $\{W\in U(\Hc)\mid \beta_W(\Cc)=\Cc\}=U(\Hc)_{\Sc}$, using \eqref{action_eq1}.
\end{enumerate}
Moreover, if one denotes by $[\Cartan(\Ic)]$ the set of all $U_{\Ic}(\Hc)$-conjugacy classes 
of Cartan subalgebras of $\Ic$, 
and for every $\Cc\in\Cartan(\Ic)$  and $\Ac\subseteq U_{\Ic}(\Hc)$ 
one denotes $\beta_{\Ac}(\Cc):=\{\beta_V(\Cc)\mid V\in \Ac\}$, 
then for any fixed $\Cc_0\in\Cartan(\Ic)$ it is easily seen that the map 
\begin{equation}\label{equivar_eq1}
U(H)/U_{\Ic}(H)\to [\Cartan(\Ic)],\quad W U_{\Ic}(H)\mapsto \beta_W(\beta_{U_{\Ic}(\Hc)}(\Cc_0))
\end{equation}
is well-defined and bijective, hence can be regarded as a parameterization of the set $[\Cartan(\Ic)]$, 
where $W U_{\Ic}(H):=\{WV\mid V\in U_{\Ic}(H)\}$ for all $W\in U(\Hc)$. 
The above map takes values as indicated since $\beta_{U_{\Ic}(\Hc)}(\Cc_0)$ is the $U_{\Ic}(\Hc)$-conjugacy class 
of $\Cc_0$, and for every $W\in U(\Hc)$ and $V\in U_{\Ic}(\Hc)$ one has 
$\beta_W(\beta_V(\Cc_0))=\beta_{WVW^{-1}}(\beta_W(\Cc_0))$.  
But the map $V\mapsto WVW^{-1}$ is a bijection of the set $U_{\Ic}(\Hc)$ onto itself 
since $\Ic$ is an ideal of $\Bc(\Hc)$, 
hence we obtain $\beta_W(\beta_{U_{\Ic}(\Hc)}(\Cc_0))=\beta_{U_{\Ic}(\Hc)}(\Cc)$, 
where $\Cc:=\beta_W(\Cc_0)$ runs over $\Cartan(\Ic)$ when $W$ runs over $U(\Hc)$ 
(by Remark~\ref{spectral}). 
\end{remark}

\begin{theorem}\label{uncountable}
If $\Ic$ is a proper operator ideal in $\Bc(\Hc)$,
then there exist uncountably many $U_{\Ic}(\Hc)$-conjugacy classes of Cartan subalgebras of~$\Ic$.
\end{theorem}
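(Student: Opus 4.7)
The plan is to exhibit an explicit uncountable family of Cartan subalgebras of $\Ic$ and show that no two of them lie in the same $U_{\Ic}(\Hc)$-orbit. The construction uses the rotated bases from Corollary~\ref{bases2} together with the $U(\Hc)$-equivariance of the bijection $\Psi\colon\Decomp(\Hc)\to\Cartan(\Ic)$ of Proposition~\ref{descr}.

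Starting from the fixed orthonormal basis $\bb$, apply Corollary~\ref{bases2} to obtain the family of orthonormal bases $\vv^{(t)}=\{\vv^{(t)}_n\}_{n\ge1}$ for $t\in[0,\pi/2)$. For each such $t$, form the decomposition $\Sc^{(t)}:=\{\CC\vv^{(t)}_n\mid n\ge 1\}\in\Decomp(\Hc)$ and set $\Cc^{(t)}:=\Psi(\Sc^{(t)})\in\Cartan(\Ic)$; concretely, $\Cc^{(t)}$ is the subalgebra of operators in $\Ic$ that are diagonal with respect to $\vv^{(t)}$. Since $[0,\pi/2)$ is uncountable, the family $\{\Cc^{(t)}\}_{t\in[0,\pi/2)}$ is an uncountable family of Cartan subalgebras of $\Ic$, and it suffices to show that $\Cc^{(t)}$ and $\Cc^{(s)}$ are never $U_{\Ic}(\Hc)$-conjugate when $t\ne s$.

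Suppose, for contradiction, that $W\Cc^{(t)}W^{-1}=\Cc^{(s)}$ for some $W\in U_{\Ic}(\Hc)$, where $t\ne s$. Using the equivariance of $\Psi$ from Remark~\ref{equivar}, this is equivalent to $\alpha_W(\Sc^{(t)})=\Sc^{(s)}$, i.e.\ $W$ maps the family of one-dimensional subspaces $\{\CC\vv^{(t)}_n\}_{n\ge 1}$ onto $\{\CC\vv^{(s)}_m\}_{m\ge 1}$. Hence there exist a bijection $\sigma\colon\NN\to\NN$ and unit scalars $\gamma_n\in\TT$ with
$$W\vv^{(t)}_n=\gamma_n\vv^{(s)}_{\sigma(n)}\quad\text{for all }n\ge 1.$$
Since $W\in U_{\Ic}(\Hc)$, write $W=\1+K$ with $K\in\Ic\subseteq\Kc(\Hc)$. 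Then for every $n\ge 1$,
$$K\vv^{(t)}_n=\gamma_n\vv^{(s)}_{\sigma(n)}-\vv^{(t)}_n,$$
and Corollary~\ref{bases2}\eqref{bases2_item2} provides a $\delta>0$, independent of $n$, with
$$\Vert K\vv^{(t)}_n\Vert=\Vert\gamma_n\vv^{(s)}_{\sigma(n)}-\vv^{(t)}_n\Vert\ge\delta.$$

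On the other hand, $\{\vv^{(t)}_n\}_{n\ge 1}$ is an orthonormal basis, so it converges weakly to $0$, and compactness of $K$ forces $\Vert K\vv^{(t)}_n\Vert\to 0$ as $n\to\infty$, contradicting the uniform lower bound $\delta>0$. Therefore no such $W$ exists, the conjugacy classes of $\Cc^{(t)}$ and $\Cc^{(s)}$ are distinct for $t\ne s$, and we have produced uncountably many $U_{\Ic}(\Hc)$-conjugacy classes. The main obstacle, namely the rigidity phenomenon that prevents a compact-perturbation unitary from transporting one family of basis directions onto a sufficiently ``tilted'' one, has already been isolated and proved in Proposition~\ref{bases22}/Corollary~\ref{bases2}, so the only real work here is organizing that input through the equivariance of $\Psi$.
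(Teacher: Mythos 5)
Your proof is correct, and it is organized somewhat differently from the paper's. The paper detects non-conjugacy of $\Cc^{(s)}$ and $\Cc^{(t)}$ by tracking a \emph{test operator}: it picks $X\in\Dc(\vv^{(s)})\cap\Ic$ with spectral multiplicities one and invokes Proposition~\ref{bases11}/Corollary~\ref{bases1} to show $WXW^{-1}\notin\Dc(\vv^{(t)})$. That device forces a case split, because when $\Ic=\Fc(\Hc)$ no such multiplicity-one diagonal operator exists in the ideal, and the paper has to run a separate norm-limit argument with finite-rank truncations $X_k$ of an operator outside $\Ic$. You instead push the whole question down to $\Decomp(\Hc)$ via the equivariance and injectivity of $\Psi$ (Remark~\ref{equivar}): conjugacy of the Cartan subalgebras is \emph{equivalent} to $W$ carrying the line decomposition $\{\CC\vv^{(t)}_n\}_n$ onto $\{\CC\vv^{(s)}_m\}_m$, which pins down $W\vv^{(t)}_n=\gamma_n\vv^{(s)}_{\sigma(n)}$ and lets you run the compactness-versus-uniform-separation argument (the same mechanism that lives inside the proof of Proposition~\ref{bases11}) directly on $K=W-\1$. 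This gives a single uniform argument covering all proper ideals, including $\Fc(\Hc)$, with no case distinction; what you give up is only that the paper's operator-level formulation also records the slightly stronger statement that individual multiplicity-one operators of $\Dc(\vv^{(s)})\cap\Ic$ cannot be $U_{\Ic}(\Hc)$-conjugated into $\Dc(\vv^{(t)})$, which is reused elsewhere (e.g.\ in Corollary~\ref{conj}). All the steps you use (the bijection $\sigma$ induced on the index set, the bound $\Vert\gamma_n\vv^{(s)}_{\sigma(n)}-\vv^{(t)}_n\Vert\ge\delta$ from Corollary~\ref{bases2}\eqref{bases2_item2}, and $\Vert K\vv^{(t)}_n\Vert\to0$ from $\Ic\subseteq\Kc(\Hc)$ and weak nullity of an orthonormal sequence) are sound.
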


\begin{proof}
Let $\{\vv^{(t)}\}_{0\le t<\pi/2}$ be the family of orthonormal bases in $\Hc$ provided by Corollary~\ref{bases2}.
For every $t\in[0,\pi/2)$ we have $\{\CC \vv^{(t)}_n\}_{n\ge1}\in\Decomp(\Hc)$. 
Hence by Proposition~\ref{descr} one obtains $\Cc^{(t)}:=\Psi(\{\CC \vv^{(t)}_n\}_{n\ge1})\in\Cartan(\Ic)$.
Equivalently, if we denote by $\Dc(\vv^{(t)})$ the set of all diagonal operators with respect to
the orthonormal basis $\vv^{(t)}$, then one has $\Cc^{(t)}=\Dc(\vv^{(t)})\cap\Ic$.
We will prove that the uncountable family $\{\Cc^{(t)}\}_{0\le t<\pi/2}$
is a (possibly non-exhaustive) system of representatives for 
distinct $U_{\Ic}(\Hc)$-conjugacy classes of Cartan subalgebras of $\Ic$. 

To this end let $s,t\in[0,\pi/2)$ be arbitrary with $s\ne t$.
We need to show that for every $W\in U_{\Ic}(\Hc)$ one has $\beta_W(\Cc^{(s)})\ne\Cc^{(t)}$.
Two cases may occur:

%$1^\circ$
Case 1. 
If $\Fc(\Hc)\subsetneqq\Ic$, then there exists an operator $X\in\Dc(\vv^{(s)})\cap\Ic=\Cc^{(s)}$
with spectral multiplicities one.
It then follows by Corollary~\ref{bases2}\eqref{bases2_item2} along with Corollary~\ref{bases1} that
for every $W\in U_{\Ic}(\Hc)\subseteq U_{\Kc(\Hc)}$, 
$WXW^{-1}\not\in \Dc(\vv^{(t)})$, i.e., $\beta_W(\Cc^{(s)})\ne\Cc^{(t)}$.

%$2^\circ$ 
Case 2. 
If $\Ic=\Fc(\Hc)$, assume %that there exists $W\in U_{\Ic}(\Hc)$ such that 
$\beta_W(\Cc^{(s)})=\Cc^{(t)}$ for some $W\in U_{\Ic}(\Hc)$,
and this will lead to a contradiction.
In fact, define $X\in\Bc(\Hc)$ by $X\vv^{(t)}_n=(1/n)\vv^{(t)}_n$ for every $n\ge 1$,
and for arbitrary $k\ge 1$ define $X_k\in\Fc(\Hc)$ by $X_k\vv^{(t)}_n=(1/n)\vv^{(t)}_n$ if $1\le k\le n$
and $X_k\vv^{(t)}_n=0$ if $k>n$.
Then for $k\ge 1$ one has $X_k\in\Cc^{(t)}$,
hence $W^{-1}X_kW\in\beta_{W^{-1}}(\Cc^{(t)})=\Cc^{(s)}\subseteq\Dc(\vv^{(s)})$ by our assumption $\beta_W(\Cc^{(s)})=\Cc^{(t)}$.
Since one knows $\lim\limits_{k\to\infty}\Vert X_k-X\Vert=0$ and $\Dc(\vv^{(s)})$ is closed in $\Bc(\Hc)$
with respect to the operator norm topology, then it follows that
$W^{-1}XW\in\Dc(\vv^{(s)})$.
On the other hand $X\in \Dc(\vv^{(t)})$ is a self-adjoint operator with spectral multiplicities one
and $W\in U_{\Ic}(\Hc)\subseteq U_{\Kc(\Hc)}$,
hence it follows by Corollary~\ref{bases2}\eqref{bases2_item2} along with Corollary~\ref{bases1}
that $W^{-1}XW\not\in\Dc(\vv^{(s)})$, 
and this achieves the contradiction. 
\end{proof}

The following proposition is helpful for computing the normalizers of the Cartan subalgebras of operator ideals.
This could be proved by using Proposition~\ref{isotropy},
but we prefer to provide here an alternative method of proof, based on matrix computations.
Recall that $\Dc$ stands for the set of all diagonal operators with respect to a fixed orthonormal basis in~$\Hc$.

\begin{proposition}\label{suppl1}
If the operator
$T\in U(\Hc)$ satisfies $T(\Dc\cap\Fc(\Hc)) T^*\subseteq\Dc$,
then there exist a unique permutation $\sigma\in\SS_\infty$ and
a unique sequence $u=\langle u_n\rangle_{n=1}^\infty$ with $u_n\in\TT$ for every $n\ge 1$,
for which $T=V_\sigma\diag\, u$.

If moreover $T\in U_{\Kc(\Hc)}$, then $\sigma\in\SS_{\fin}$.
\end{proposition}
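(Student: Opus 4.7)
The plan is to recover $\sigma$ and $u$ from the action of $T$ on rank-one diagonal projections, and then read off both the permutation and the phases directly from the images of the basis vectors. Concretely, for every $n\ge 1$ let $P_n$ denote the orthogonal projection onto $\CC\bb_n$, so that $P_n\in\Dc\cap\Fc(\Hc)$. By hypothesis $TP_nT^*\in\Dc$, and since $T\in U(\Hc)$ the operator $TP_nT^*$ is a rank-one orthogonal projection in $\Dc$. Rank-one projections in $\Dc$ are exactly the $P_m$'s, so there is a function $\pi\colon\NN\to\NN$ with $TP_nT^*=P_{\pi(n)}$.

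Next I would verify that $\pi$ is a bijection. Injectivity follows by applying $T^*(\cdot)T$ to the identity $P_{\pi(n)}=P_{\pi(m)}$. For surjectivity, note that for any $m\in\NN$, the rank-one projection $T^*P_mT$ has a one-dimensional range; if this range were orthogonal to every $\CC\bb_n$, it would be $\{0\}$, a contradiction. Since the range is a one-dimensional subspace not orthogonal to every basis line, a small argument (using that $T^*P_mT$ commutes with each $P_{\pi(n)}$ via its $\Dc$-membership after relabeling, or more directly: $P_m=T(T^*P_mT)T^*$ forces $T^*P_mT$ to be itself a rank-one diagonal projection) shows $T^*P_mT=P_n$ for some $n$, i.e.\ $\pi(n)=m$. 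Translating $TP_nT^*=P_{\pi(n)}$ into vectors, $T$ sends the line $\CC\bb_n$ onto $\CC\bb_{\pi(n)}$, so there are unimodular scalars $u_n$ with $T\bb_n=u_n\bb_{\pi(n)}$.

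Setting $\sigma:=\pi^{-1}\in\SS_\infty$ and $u:=\langle u_n\rangle_{n=1}^\infty$, the definition $V_\sigma\bb_n=\bb_{\sigma^{-1}(n)}=\bb_{\pi(n)}$ in the paper gives
$$(V_\sigma\diag u)\bb_n=V_\sigma(u_n\bb_n)=u_n\bb_{\pi(n)}=T\bb_n$$
for every $n\ge1$, hence $T=V_\sigma\diag u$. Uniqueness is immediate: if $V_\sigma\diag u=V_\tau\diag w$, evaluating on $\bb_n$ yields $u_n\bb_{\sigma^{-1}(n)}=w_n\bb_{\tau^{-1}(n)}$, so $\sigma^{-1}=\tau^{-1}$ and $u_n=w_n$ for all $n$.

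For the final assertion assume $T=\1+K$ with $K\in\Kc(\Hc)$. Then
$$K\bb_n=(T-\1)\bb_n=u_n\bb_{\pi(n)}-\bb_n,$$
and because $\{\bb_n\}$ converges weakly to $0$ while $K$ is compact, $\Vert K\bb_n\Vert\to 0$. Since $\Vert u_n\bb_{\pi(n)}-\bb_n\Vert=\sqrt{2}$ whenever $\pi(n)\ne n$, the condition forces $\pi(n)=n$ eventually, i.e.\ $\pi\in\SS_\fin$, and therefore $\sigma=\pi^{-1}\in\SS_\fin$. The main subtlety I anticipate is cleanly deducing surjectivity of $\pi$ and getting the direction of the permutation right in the identification $T=V_\sigma\diag u$ (i.e.\ $\sigma=\pi^{-1}$ rather than $\sigma=\pi$), both of which are handled by the direct computation on basis vectors above.
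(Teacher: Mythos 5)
Your proof is correct, and it takes a genuinely different route from the paper's. The paper works at the level of matrix entries: writing $T=(t_{jk})_{j,k\ge1}$ it computes $TDT^*$ for a general $D\in\Dc\cap\Fc(\Hc)$, extracts the condition $t_{kj}\bar t_{\ell j}=0$ for $k\ne\ell$, and deduces that each column of $T$ has exactly one (necessarily unimodular) nonzero entry; the permutation and the phases are then read off from the positions and values of those entries. You instead argue at the operator level, feeding in only the rank-one diagonal projections $P_n$ and using that a unitary conjugate of a rank-one orthogonal projection is again one, hence equal to some $P_{\pi(n)}$ once it is known to lie in $\Dc$. This is cleaner, makes the combinatorics (a permutation of the basis lines together with phases) transparent, and shows that the formally weaker hypothesis $TP_nT^*\in\Dc$ for all $n$ already suffices. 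The one place you are vague is surjectivity of $\pi$: the parenthetical claim that $P_m=T(T^*P_mT)T^*$ ``forces'' $T^*P_mT$ to be a diagonal projection is circular as written. The one-line fix is to note that $\1=TT^*=\sum_n TP_nT^*=\sum_n P_{\pi(n)}$ in the strong operator topology, which forces $\pi(\NN)=\NN$; equivalently, $\{T\bb_n\}_{n\ge1}=\{u_n\bb_{\pi(n)}\}_{n\ge1}$ is an orthonormal basis, so every $\bb_m$ must occur among the $\bb_{\pi(n)}$. With that repaired, the identification $T=V_\sigma\diag u$ with $\sigma=\pi^{-1}$ (you are right to be careful about the direction of the permutation, given the convention $V_\sigma\bb_n=\bb_{\sigma^{-1}(n)}$), the uniqueness argument, and the compactness argument forcing $\sigma\in\SS_{\fin}$ all check out and coincide in substance with the paper's closing step.
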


\begin{proof}
For $T=(t_{jk})_{j,k\ge 1}\in U(\Hc)$ and $D=\diag(d_1,d_2,d_3,\dots)\in \Dc\cap\Fc(\Hc)$ one has
\allowdisplaybreaks
\begin{align}
TDT^*
&= \begin{pmatrix}
t_{11}& t_{12} & t_{13}& \dots \\
t_{21}& t_{22}& t_{23}& \dots \\
t_{31}& t_{32} & t_{33}& \dots\\
\vdots & \vdots & \vdots &
  \end{pmatrix}
\begin{pmatrix}
d_1& 0 & 0 & \cdots \\
0 & d_2 & 0 & \cdots \\
0 &  0 & d_3 & \ddots \\
\vdots & \vdots & \ddots & \ddots
  \end{pmatrix}
\begin{pmatrix}
\bar t_{11} &\bar t_{21} &\bar t_{31}  &\dots  \\
\bar t_{12} & \bar t_{22} & \bar t_{32} & \dots \\
\bar t_{13} & \bar t_{23} & \bar t_{33} & \dots \\
\vdots & \vdots & \vdots &
  \end{pmatrix} \nonumber \\
&=\begin{pmatrix}
\sum d_j\vert t_{1j}\vert^2  &\sum d_j t_{1j}\bar t_{2j} &\sum d_j t_{1j}\bar t_{3j}  &\dots  \\
\sum d_j t_{2j}\bar t_{1j} & \sum d_j\vert t_{2j}\vert^2 & \sum d_j t_{2j}\bar t_{3j} & \dots \\
\sum d_j t_{3j}\bar t_{1j} & \sum d_j t_{3j}\bar t_{2j} & \sum d_j\vert t_{3j}\vert^2 & \dots \\
\vdots & \vdots & \vdots &
  \end{pmatrix} \nonumber
\end{align}
The condition that the off-diagonal coefficients of $TDT^*$ vanish for
every $D\in\Dc\cap\Fc(\Hc)$ is easily verified to be equivalent to
$$ \text{ if $j,k,\ell\ge 1$ and $k\ne\ell$, then $t_{kj}t_{\ell j}=0$. }$$
Now let $j\ge 1$ be fixed for the moment.
Since $T\in U(\Hc)$, we have $\Ker T=\{0\}$, hence there exists $k\ge 1$ for which $t_{kj}\ne0$,
and then the above condition implies $t_{\ell j}=0$ whenever $\ell\ne k$.
This shows that this subscript $k$ with $t_{kj}\ne0$ is uniquely determined by $j$, hence we may write $k=\sigma(j)$.
We thus obtain a function $\sigma\colon\{1,2,\dots\}\to\{1,2,\dots\}$
with the property that for every $j,k\ge 1$ we have $t_{kj}\ne0$ if and only if $k=\sigma(j)$.
Note that the function $\sigma$ is surjective, since otherwise there exists a row in $T$
consisting only of zeros, and then $\Ker T^*\ne\{0\}$.
Moreover $\sigma$ is injective since otherwise two columns in the matrix of $T$ would be linearly dependent,
which is not possible since $\Ker T=\{0\}$.
Consequently $\sigma$ is a bijection, that is, $\sigma\in\SS_\infty$. 
In summary, the unitary matrix for $T = (t_{ij})_{i,j\ge1}$  
has $t_{ij} = 0$ for all $i$ except $i=\sigma(j)$, 
in which case $\vert t_{\sigma(j) j}\vert = 1$.

If for $n\ge 1$ we choose $u_n=t_{\sigma(n)n}$, then $u_n\in\TT$ since $T$ is unitary
and $u_n$ is the unique nonzero entry on the $n$-th column of $T$. 
Hence $T=V_\sigma\diag\, u$. 

Finally, if  in addition $T\in U_{\Kc(\Hc)}$, then 
$\Kc(\Hc)\ni T-\1=V_\sigma\diag\,u-\1$, 
so $\vert u_{\sigma(n)}-u_n\vert=\Vert (T-\1)\bb_n\Vert\to 0$, 
so $\sigma(n)=n$ for all but finitely may~$n$, 
i.e.,  
$\sigma\in\SS_{\fin}$. 
\end{proof}

The Corollary~\ref{suppl4} below 
points to a rather
rich hierarchy of conjugacy classes
of Cartan subalgebras in operator ideals.
This is 
obtained as a consequence of the interaction between the lattice of operator ideals
and the notion of Cartan subalgebras and their conjugacy classes. 

\begin{remark}\label{suppl2}
\normalfont
For later use we record a couple of simple remarks on intersections of Cartan subalgebras
with smaller ideals.
\begin{enumerate}[(i)] 
\item\label{suppl2_item1}
For all operator ideals $\Ic\subseteq\Jc\subsetneqq\Bc(\Hc)$ and every subalgebra $\Cc\in\Cartan(\Jc)$
one has $\Cc\cap\Ic\in\Cartan(\Ic)$.
This follows directly by Proposition~\ref{descr} using the surjectivity of~$\Psi$.

\item\label{suppl2_item2}
If in addition $\Jc$ is a separable complete normed ideal, $\Cc\in\Cartan(\Jc)$, $\Ic\ne\{0\}$,
and another subalgebra $\Cc_0\in\Cartan(\Jc)$ has the property that 
the subalgebras $\Cc\cap\Ic$ and $\Cc_0\cap\Ic$
of $\Ic$ are $U_{\Ic}(\Hc)$-conjugated to each other, then $\Cc$ and $\Cc_0$ are $U_{\Ic}(\Hc)$-conjugated
to each other (hence $U_{\Jc}(\Hc)$-conjugated, since $U_{\Ic}(\Hc)\subseteq U_{\Jc}(\Hc)$).
Indeed, it easily follows that 
$\Cc\cap\Fc(\Hc) $ and $\Cc_0\cap\Fc(\Hc)$ 
are $\Jc$-norm-dense in $\Cc$ and $\Cc_0$, respectively. 
Since $\Fc(\Hc)\subseteq\Ic$, it then follows that also $\Cc\cap\Ic$ and $\Cc_0\cap\Ic$
are dense in $\Cc$ and $\Cc_0$, respectively.
Therefore, if $V\in U_{\Ic}(\Hc)$ for which $V(\Cc\cap\Ic)V^{-1}=\Cc_0\cap\Ic$,
via $\Vert\cdot\Vert_{\Jc}\ge\Vert\cdot\Vert$ and $\Jc$-norm limits one has also $V\Cc V^{-1}=\Cc_0$.
\end{enumerate}
\end{remark}

The next theorem negates the converse of Remark~\ref{suppl2}\eqref{suppl2_item2}.  
Its proof is based on a construction
that sharpens the one from the proof of Proposition~\ref{bases22},  
which would be recovered for the constant sequence $\theta=\langle 1,1,\dots\rangle$ 
in the proof of Theorem~\ref{suppl3} below. 
In fact, recalling Proposition~\ref{spectral}, 
one can see that Theorem~\ref{uncountable} is the limit situation for 
$\Jc=\Bc(\Hc)$ of Theorem~\ref{suppl3} below. 
Although there is a certain overlap of their proofs, 
we chose to present these theorems separately  
since the proof of Theorem~\ref{uncountable} is simpler 
and moreover for $\Jc=\Bc(\Hc)$ we did not define $\Cartan(\Jc)$ 
in order to avoid any confusion with the notion of Cartan subalgebras of von Neumann algebras  
as discussed in the Introduction. 

\begin{theorem}\label{suppl3}
For all operator ideals $\{0\}\subsetneqq\Ic%\subseteq\Sg_2(\Hc)
\subsetneqq\Jc\subsetneqq\Bc(\Hc)$ 
there exists an uncountable family of Cartan subalgebras of $\Jc$ that are 
pairwise $U_{\Jc}(\Hc)$-conjugated to each other but 
whose intersections with 
$\Ic$ are Cartan subalgebras of $\Ic$ that pairwise fail to be $U_{\Ic}(\Hc)$-conjugated to each other. 
\end{theorem}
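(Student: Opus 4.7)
The plan is to refine the construction of Proposition~\ref{bases22} and the proof of Theorem~\ref{uncountable} by using $n$-dependent rotation angles calibrated to the gap between the characteristic sets $\Sigma(\Ic)$ and $\Sigma(\Jc)$. Since $\Ic\subsetneqq\Jc$ and $\Jc$ is proper, pick any $T_0\in\Jc\setminus\Ic$; this $T_0$ is compact, and its singular value sequence $\theta:=s(T_0)=\langle\theta_n\rangle_{n\ge 1}$ is a decreasing sequence with $\theta_n\downarrow 0$ belonging to $\Sigma(\Jc)\setminus\Sigma(\Ic)$. For every $t\in[0,\pi/2)$ let $V_t\in U(\Hc)$ act as the rotation by angle $t\theta_n$ in the two-dimensional block $\spann\{\bb_{2n-1},\bb_{2n}\}$ for every $n\ge 1$, and set $\Cc^{(t)}:=V_t(\Dc\cap\Jc)V_t^*$, which is a Cartan subalgebra of $\Jc$ by Proposition~\ref{descr} and whose intersection $\Cc^{(t)}\cap\Ic=V_t(\Dc\cap\Ic)V_t^*$ is a Cartan subalgebra of $\Ic$ by Remark~\ref{suppl2}\eqref{suppl2_item1}. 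A direct $2\times 2$ computation shows that $V_t-\1$ restricted to the $n$-th block has both singular values equal to $2|\sin(t\theta_n/2)|$, hence the singular values of $V_t-\1$ are dominated by a multiple of $\theta$, so $V_t-\1\in\Jc$. Consequently $V_tV_s^*\in U_{\Jc}(\Hc)$ conjugates $\Cc^{(s)}$ onto $\Cc^{(t)}$ for all $s,t\in[0,\pi/2)$, so the family $\{\Cc^{(t)}\}_{t\in[0,\pi/2)}$ lies in a single $U_{\Jc}(\Hc)$-conjugacy class.

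To show that the intersections $\{\Cc^{(t)}\cap\Ic\}_{t\in[0,\pi/2)}$ lie in pairwise distinct $U_{\Ic}(\Hc)$-orbits, fix $s\ne t$ and suppose for contradiction that some $W\in U_{\Ic}(\Hc)$ satisfies $W(\Cc^{(s)}\cap\Ic)W^*=\Cc^{(t)}\cap\Ic$. Setting $T:=V_t^*WV_s\in U(\Hc)$, one gets $T(\Dc\cap\Ic)T^*=\Dc\cap\Ic$, and in particular $T(\Dc\cap\Fc(\Hc))T^*\subseteq\Dc$ because $\Fc(\Hc)\subseteq\Ic$. Proposition~\ref{suppl1} then yields $\sigma\in\SS_\infty$ and $u\in\TT^{\NN}$ with $T=V_\sigma\diag u$, so that $W=V_tV_\sigma\diag u\,V_s^*$ and the condition $W-\1\in\Ic$ translates, after multiplying by $V_t^*$ on the left and $V_s$ on the right, to $V_\sigma\diag u-V_t^*V_s\in\Ic$. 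Since $V_t^*V_s-\1=V_t^*(V_s-V_t)\in\Jc\subseteq\Kc(\Hc)$, one deduces $V_\sigma\diag u-\1\in\Kc(\Hc)$, and the computation $(V_\sigma\diag u-\1)\bb_n=u_n\bb_{\sigma^{-1}(n)}-\bb_n$, which has norm $\sqrt{2}$ whenever $\sigma(n)\ne n$, forces $\sigma\in\SS_\fin$ exactly as in Proposition~\ref{S_fin}. Hence $V_\sigma-\1\in\Fc(\Hc)\subseteq\Ic$, and after absorbing this finite-rank error one is left with
\begin{equation*}
V_s-V_t\diag u\in\Ic.
\end{equation*}

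The crux, and the main technical obstacle, is to extract a contradiction from this last containment using only $\theta\notin\Sigma(\Ic)$. The operator $V_s-V_t\diag u$ is block-diagonal in the pairs $\{\bb_{2n-1},\bb_{2n}\}$ because each of $V_s$, $V_t$, and $\diag u$ preserves every such block, and a computation of the squared norm of its first column in the $n$-th block gives
\begin{equation*}
2-2\text{Re}(u_{2n-1})\cos((s-t)\theta_n)\ge 2-2\cos((s-t)\theta_n)=4\sin^2((s-t)\theta_n/2),
\end{equation*}
uniformly in $u_{2n-1}\in\TT$. Hence the operator norm $\tau_n$ of the $n$-th block satisfies $\tau_n\ge 2|\sin((s-t)\theta_n/2)|$. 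A short multi-set counting argument for a block-diagonal operator with blocks of size two gives $s_k(V_s-V_t\diag u)\ge\tau_k^\ast$ where $\tau^\ast$ is the decreasing rearrangement of $\langle\tau_n\rangle$; combined with the bound $2|\sin x|\ge(4/\pi)|x|$ on $[-\pi/2,\pi/2]$ and the fact that $\theta$ is decreasing with $\theta_n\downarrow 0$, this yields $s_k(V_s-V_t\diag u)\ge(2|s-t|/\pi)\theta_k$ for all $k$ large enough. Solidity of the characteristic set $\Sigma(\Ic)$ then forces $\theta\in\Sigma(\Ic)$, contradicting the choice of $\theta$. The crucial point, which the first-column estimate delivers, is the uniformity of the block lower bound in the diagonal unitary $\diag u$: no choice of $u$ is able to absorb the off-diagonal rotation terms of magnitude $|\sin((s-t)\theta_n)|$, and it is precisely this rigidity that distinguishes $U_{\Ic}(\Hc)$- from $U_{\Jc}(\Hc)$-conjugacy.
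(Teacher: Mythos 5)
Your proof is correct and follows essentially the same route as the paper's: block-diagonal rotations with angles proportional to a sequence $\theta\in\Sigma(\Jc)\setminus\Sigma(\Ic)$, reduction of a hypothetical conjugating unitary to $V_\sigma\diag u$ via Proposition~\ref{suppl1} with $\sigma\in\SS_{\fin}$, and a lower bound on the $s$-numbers of the resulting block-diagonal difference to contradict $\theta\notin\Sigma(\Ic)$. Your explicit column-norm estimate $2-2\,\mathrm{Re}(u_{2n-1})\cos((s-t)\theta_n)$, uniform in the diagonal unitary, is a slightly more careful rendering of the estimate the paper only sketches for the case $s=0$ and then invokes for general pairs.
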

It is easily seen in Theorem~\ref{suppl3} that the above family of Cartan subalgebras of $\Jc$ themselves pairwise fail to be
$\U_{\Ic}(\Hc)$-conjugated to each other, 
since so do their intersections with~$\Ic$.  

\begin{proof}[Proof of Theorem~\ref{suppl3}]
The proof proceeds in two steps. 
The first is a construction for a single operator.

Step 1. By using the surjectivity of $\Psi$ in Proposition~\ref{descr} let $\Cc:=\Dc\cap\Jc\in\Cartan(\Jc)$,
where we recall that this means that we fix an orthonormal basis $\bb=\{\bb_n\}_{n\ge 1}$ in $\Hc$
and we denoted by $\Dc$ the corresponding set of diagonal operators in $\Bc(\Hc)$ with respect to $\bb$ 
(see the beginning of Section~\ref{Prelims}). 
At this step we prove that if $\theta=\langle\theta_n\rangle_{n=1}^\infty$ is any decreasing sequence in $(0,\pi)$ 
with $\theta\in\Sigma(\Jc)\setminus\Sigma(\Ic)$ and $A_\theta\in U(\Hc)$ is the operator defined by 
\eqref{suppl3_proof_eq0.1}--\eqref{suppl3_proof_eq0.2} below, 
then $A\in U_{\Jc}(\Hc)\setminus U_{\Ic}(\Hc)$. 
Moreover, for the orthonormal basis $\vv=\{\vv_n\}_{n\ge 1}$ in $\Hc$ defined by
$v_n:=Ab_n$ for every $n\ge 1$,
if $\Dc(\vv)$ is the corrresponding set of diagonal operators in $\Bc(\Hc)$ with respect to $\vv$,
and we define $\Cc_0:=\Dc(\vv)\cap\Jc$, then 
$\Cc,\Cc_0\in\Cartan(\Jc)$ are $U_{\Jc}(\Hc)$-conjugated but fail to be $U_{\Ic}(\Hc)$-conjugated to each other.

First let $\theta=\langle\theta_n\rangle_{n=1}^\infty$ be any decreasing sequence in $(0,\pi)$
and denote $\lambda_n=e^{\ie\theta_n}\in\TT$ for every $n\ge 1$ and
$\lambda=\langle\lambda_n\rangle_{n=1}^\infty$.
For every integer $r\ge 1$ define the unitary operator
\begin{equation}\label{suppl3_proof_eq0.1}
A_{\theta_r}=\begin{pmatrix}
      \hfill \cos\theta_r & \sin\theta_r \\
      -\sin\theta_r & \cos\theta_r
      \end{pmatrix}
\text{ matricially and canonically on }\spann\{b_{2r-1},b_{2r}\}
\end{equation}
and then define the unitary block diagonal operator 
\begin{equation}\label{suppl3_proof_eq0.2}
A_\theta=\begin{pmatrix}
A_{\theta_1} & 0 & 0 & \cdots \\
 0  & A_{\theta_2} & 0 & \cdots \\
 0  & 0 & A_{\theta_3} & \ddots \\
 \vdots   & \vdots &  \ddots  & \ddots
    \end{pmatrix}\in U(\Hc).
\end{equation}
For arbitrary $r\ge 1$ the operator $A_{\theta_r}$ has the eigenvalues $\{\lambda_r,\bar\lambda_r\}$
and $\vert 1-\lambda_r\vert=2\sin\frac{\theta_r}{2}$.
Therefore, by using the fact that $\1-A_{\theta}$ is a normal operator,
it follows that for an arbitrary operator ideal $\Lc$ with characteristic set $\Sigma(\Lc)$ one has 
$$A_\theta\in U_{\Lc}\iff\Bigl\langle\sin\frac{\theta_r}{2}\Bigr\rangle_{r=1}^\infty\in\Sigma(\Lc) $$
and in particular
\begin{equation}\label{suppl3_proof_eq1}
A_\theta\in U_{\Jc}(\Hc)\setminus U_{\Ic}(\Hc)
\iff \Bigl\langle\sin\frac{\theta_r}{2}\Bigr\rangle_{r=1}^\infty\in\Sigma(\Jc)\setminus\Sigma(\Ic) 
\iff \theta\in\Sigma(\Jc)\setminus\Sigma(\Ic)
\end{equation}
where the last equivalence is based on $\lim\limits_{t\to0}\frac{\sin t}{t}=1$. 
Since 
$\Ic\subsetneqq\Jc$, we can choose the sequence $\theta\in\Sigma(\Jc)\setminus\Sigma(\Ic)$.  
This sequence will be fixed throughout this step %the rest 
of the proof 
and we will denote 
$A:=A_\theta\in U_{\Jc}(\Hc)\setminus U_{\Ic}(\Hc)$.

Now consider the orthonormal basis $v=\{v_n\}_{n\ge 1}$ in $\Hc$ defined by
$v_n=Ab_n$ for every $n\ge 1$,
let $\Dc(v)$ be the corrresponding set of diagonal operators in $\Bc(\Hc)$,
and define $\Cc_0:=\Dc(v)\cap\Jc$.

We now check that the assertions from this step of the proof %the statement 
hold true for $\Cc$ and $\Cc_0$.
First note that $\Cc,\Cc_0\in\Cartan(\Jc)$ by Proposition~\ref{descr}.
According to the way the orthonormal basis $v$ was constructed,
we have $A\Dc A^{-1}=\Dc(v)$. 
Since  $A\Jc A^{-1}= \Jc$, we obtain   
$A\Cc A^{-1}=A(\Dc\cap \Jc)A^{-1}
=(A\Dc A^{-1})\cap \Jc=\Dc(v) \cap \Jc=\Cc_0$.
This shows that the Cartan subalgebras
$\Cc$ and $\Cc_0$ of $\Jc$ are $U_{\Jc}(\Hc)$-conjugated to each other 
since $A\in U_{\Jc}(\Hc)$.

To prove $\Cc \cap \Ic$ and $\Cc_0 \cap \Ic$ are not $U_{\Ic}(\Hc)$-conjugated to each other, 
let us assume otherwise that the Cartan subalgebras
$\Cc\cap\Ic$ and $\Cc_0\cap\Ic$ of $\Ic$ were also $U_{\Ic}(\Hc)$-conjugated to each other. 
This means that there exists 
$B\in U_{\Ic}(\Hc)$  for which
$B(\Cc\cap\Ic)B^{-1}=\Cc_0\cap\Ic$.
On the other hand, since we have seen that $A\Cc A^{-1}=\Cc_0$,
one has also 
$A(\Cc\cap\Ic)A^{-1}=\Cc_0\cap\Ic$.
It then follows that 
$B(\Cc\cap\Ic)B^{-1}=A(\Cc\cap\Ic)A^{-1}$,
hence
$$(A^{-1}B)(\Cc\cap\Ic)(A^{-1}B)^{-1}=\Cc\cap\Ic.$$
By using Lemma~\ref{suppl1} we now obtain a permutation $\sigma\in\SS_\fin$ and
a sequence $u=\langle u_n\rangle_{n=1}^\infty$ with $u_n\in\TT$ for every $n\ge 1$,
for which $A^{-1}B=V_\sigma(\diag\, u)$,
hence
\begin{equation}\label{suppl3_proof_eq2}
B(\diag\, u)^{-1}=AV_\sigma.
\end{equation}
Recall that 
$B\in U_{\Ic}(\Hc)$,
hence 
$B-\1\in\Ic$, and this implies implies 
\begin{equation}\label{suppl3_proof_eq3}
A-(\diag\,u)^{-1}V_\sigma^{-1}\in\Ic
\end{equation}
since $A-(\diag\,u)^{-1}V_\sigma^{-1}=(B-\1)(\diag\,u)^{-1}V_\sigma^{-1}$ by~\eqref{suppl3_proof_eq2}. 
 
On the other hand, since $\sigma\in\SS_{\fin}$, it follows that  
$V_\sigma =$ identity on all $b_n$ except for finitely many,
and so the subtraction of $(\diag\,u)^{-1}V_\sigma^{-1}$ acts only on the diagonal of $A$, 
except for finitely many blocks of $A$. 
In other words, this difference
$A-(\diag\,u)^{-1}V_\sigma^{-1}$
is a matrix which is block diagonal with off diagonal entries $\sin\frac{\theta}{2}$ in all but finitely many of the blocks.
Now each $2\times 2$ block then has norm $\ge\vert \sin\frac{\theta}{2}\vert$, 
and so also its biggest $s$-number.
Using basic $s$-number theory for operator ideals it follows that the $s$-numbers of $A-(\diag\,u)^{-1}V_\sigma^{-1}$
 are bounded below by a constant multiple of the sequence $\theta$ 
 (see \cite[Subsect.\ 2.7]{DFWW04}). 
Hence the positive operator $\vert A-(\diag\,u)^{-1}V_\sigma^{-1}\vert$ 
is bounded below by the direct sum of $2\times 2$ positive matrices with eigenvalues $\vert\sin\frac{\theta}{2}\vert$ and~$0$.
Since $\theta\in\Sigma(\Jc)\setminus\Sigma(\Ic)$, 
it then follows that $A-(\diag\,u)^{-1}V_\sigma^{-1}\in\Jc\setminus\Ic$. 
This contradicts \eqref{suppl3_proof_eq3} and hence completes the proof that
$\Cc\cap\Ic$ and $\Cc_0\cap\Ic$
fail to be $U_{\Ic}(\Hc)$-conjugated to each other.

Step 2. Let $\theta=\langle\theta_n\rangle_{n=1}^\infty$ is any decreasing sequence in $(0,\pi)$ 
with $\theta\in\Sigma(\Jc)\setminus\Sigma(\Ic)$. 
Then for arbitrary  
$\alpha\in(0,1]$ the sequence 
$\alpha\theta:=\langle\alpha\theta_n\rangle_{n=1}^\infty$ 
is again a decreasing sequence in $(0,\pi)$ 
with $\alpha\theta\in\Sigma(\Jc)\setminus\Sigma(\Ic)$, 
hence $A_{\alpha\theta}\in U_{\Jc}(\Hc)\setminus U_{\Ic}(\Hc)$ 
by Step~1 of the present proof. 
Moreover, since $\alpha\theta+\beta\theta=(\alpha+\beta)\theta$, 
we obtain $A_{\alpha\theta}A_{\beta\theta}=A_{(\alpha+\beta)\theta} $ if $\alpha,\beta\in(0,1]$ with $\alpha+\beta\le 1$, 
by using \eqref{suppl3_proof_eq0.1}--\eqref{suppl3_proof_eq0.2}.    
This shows that for the orthonormal bases $v^{(\alpha)}=\{v_n^{(\alpha)}\}_{n\ge 1}$ in $\Hc$ defined by
$\vv_n^{(\alpha)}:=A_{\alpha\theta}b_n$ for every $n\ge 1$, 
we have $A_{\alpha\theta}\vv^{(\beta)}=\vv^{(\alpha+\beta)}$ (vectorwise on these bases) if $\alpha+\beta\le m$. 

Thus any two distinct bases in the set $\{\vv^{(\alpha)}\mid \alpha\in(0,1]\}$ 
are related to each other by a suitable unitary operator of the form $A_{t\theta}\in U_{\Jc}(\Hc)\setminus U_{\Ic}(\Hc)$ 
with $t\in(0,1]$, 
just as it was the case with the bases $\bb$ and $\vv$ in Step 1 of this proof. 
Hence, if for every basis $v^{(\alpha)}$ one denotes by $\Dc(v^{(\alpha)})$ 
its corresponding set of diagonal operators in $\Bc(\Hc)$,
and we define $\Cc_\alpha:=\Dc(v^{(\alpha)})\cap\Jc$, then 
$\{\Cc_\alpha\mid \alpha\in(0,1]\}\subseteq\Cartan(\Jc)$ are pairwise $U_{\Jc}(\Hc)$-conjugated and 
their intersections with $\Ic$ pairwise fail to be $U_{\Jc}(\Hc)$-conjugated to each other.
\end{proof}

\begin{corollary}\label{suppl4}
For all operator ideals 
$\{0\}\subsetneqq\Ic
\subsetneqq
\Jc\subsetneqq\Bc(\Hc)$
and every $U_{\Jc}(\Hc)$-conjugacy class $\Oc\subseteq\Cartan(\Jc)$, 
the set $\{\Cc\cap\Ic\mid\Cc\in\Oc\}\subseteq\Cartan(\Ic)$
disjointly partitions into an uncountable family of $U_{\Jc}(\Hc)$-conjugacy class
of Cartan subalgebras of~$\Ic$.
\end{corollary}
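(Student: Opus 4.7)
The plan is to transplant Step~2 of the proof of Theorem~\ref{suppl3} into the given conjugacy class $\Oc$. That step produced its uncountable family inside the specific class containing $\Dc\cap\Jc$; by selecting a basis adapted to any fixed member of $\Oc$, the same rotation construction will live inside $\Oc$, and the resulting intersections with $\Ic$ will land in pairwise distinct conjugacy classes.

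To set it up, I would pick any $\Cc_0\in\Oc$, use Proposition~\ref{descr} to write $\Cc_0=\Psi(\Sc_0)$ for some $\Sc_0\in\Decomp(\Hc)$, and choose a unit vector $b_n^{(0)}$ inside each one-dimensional subspace of $\Sc_0$ to form an orthonormal basis $\bb^{(0)}=\{b_n^{(0)}\}_{n\ge 1}$ of $\Hc$ relative to which $\Cc_0$ is exactly the algebra of $\bb^{(0)}$-diagonal operators in $\Jc$. I would also fix a decreasing sequence $\theta=\langle\theta_n\rangle_{n=1}^\infty$ in $(0,\pi)$ with $\theta\in\Sigma(\Jc)\setminus\Sigma(\Ic)$, available since $\Ic\subsetneqq\Jc$.

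For every $\alpha\in(0,1]$ I would then form the block-diagonal rotation $A_{\alpha\theta}\in U(\Hc)$ exactly as in the proof of Theorem~\ref{suppl3}, now acting on the consecutive pairs $\{b_{2r-1}^{(0)},b_{2r}^{(0)}\}$ by rotation of angle $\alpha\theta_r$. The $s$-number computation underlying $A_{\alpha\theta}\in U_{\Jc}(\Hc)\setminus U_{\Ic}(\Hc)$ depends only on the block structure and not on the reference basis, and the semigroup identity $A_{\alpha\theta}A_{\beta\theta}=A_{(\alpha+\beta)\theta}$ persists. Setting $\Cc_\alpha:=A_{\alpha\theta}\Cc_0 A_{\alpha\theta}^{-1}$ gives a family $\{\Cc_\alpha\}_{\alpha\in(0,1]}\subseteq\Oc$ (each obtained from $\Cc_0$ by $U_{\Jc}(\Hc)$-conjugation), so each $\Cc_\alpha\cap\Ic$ belongs to $\{\Cc\cap\Ic\mid\Cc\in\Oc\}$.

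The pairwise distinction of the orbits of $\{\Cc_\alpha\cap\Ic\}_{\alpha\in(0,1]}$ is obtained by a verbatim repetition of the nonconjugacy argument of Step~1 in the proof of Theorem~\ref{suppl3}: if $\alpha\ne\beta$ and a suitable $B\in U_{\Ic}(\Hc)$ satisfied $B(\Cc_\alpha\cap\Ic)B^{-1}=\Cc_\beta\cap\Ic$, then $A_{\beta\theta}^{-1}BA_{\alpha\theta}$ would normalize $\Cc_0\cap\Fc(\Hc)$; an application of Proposition~\ref{suppl1} relative to $\bb^{(0)}$ would then yield a factorization $V_\sigma\diag u$ with $\sigma\in\SS_\fin$, and re-substitution would force $A_{(\beta-\alpha)\theta}-\1\in\Ic$, contradicting $(\beta-\alpha)\theta\in\Sigma(\Jc)\setminus\Sigma(\Ic)$. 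Since distinct orbits of a group action are automatically disjoint, this produces the required uncountable disjoint partition of $\{\Cc\cap\Ic\mid\Cc\in\Oc\}$. The main obstacle is twofold: first, Proposition~\ref{suppl1} is phrased for the fixed reference basis $\bb$ and its $\Dc$, so one must observe that its matrix-computation proof is intrinsic to any orthonormal basis or, alternatively, reduce to the reference case by a preliminary conjugation $W\in U(\Hc)$ with $W\bb=\bb^{(0)}$ (available via the spectral theorem as in Remark~\ref{spectral}, with $W\Ic W^{-1}=\Ic$ since $\Ic$ is an ideal); second, the conjugating unitary $B$ must be taken in the group whose orbits decompose the family, and the argument proceeds cleanly for $B\in U_{\Ic}(\Hc)$, matching the conclusion of Theorem~\ref{suppl3} and of Remark~\ref{suppl2}\eqref{suppl2_item2}, whereas within $U_{\Jc}(\Hc)$ the family $\{\Cc_\alpha\}$ is by construction $U_{\Jc}(\Hc)$-conjugated via $A_{(\beta-\alpha)\theta}$.
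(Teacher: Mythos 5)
Your proof is correct and is exactly what the paper's one-line proof (``The assertion follows directly from Theorem~\ref{suppl3}'') leaves implicit: since the reference basis in that theorem's proof is arbitrary, the rotation construction of Step~2 can be run from a basis diagonalizing any chosen $\Cc_0\in\Oc$, planting the uncountable family inside $\Oc$, and your remark on the basis-independence of Proposition~\ref{suppl1} supplies the only detail the paper omits. Note only that what you actually establish (as in Theorem~\ref{suppl3}) is failure of $U_{\Ic}(\Hc)$-conjugacy, which is evidently what the corollary's ``$U_{\Jc}(\Hc)$-conjugacy class'' must mean, since $\{\Cc\cap\Ic\mid\Cc\in\Oc\}$ is itself a single $U_{\Jc}(\Hc)$-conjugacy class and so could not split into uncountably many of them.
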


\begin{proof}
The assertion follows directly from Theorem~\ref{suppl3}.
\end{proof}

\begin{remark}\label{suppl5}
\normalfont 
Using Remark~\ref{equivar}, one can 
translate information provided by Theorems \ref{uncountable} and \ref{suppl3} 
from the conjugacy action of unitary groups on Cartan subalgebras to  
the natural action $\alpha$ of the same groups on $\Decomp(\Hc)$ (see Definition~\ref{action}).  
One thus obtains that for arbitrary operator ideals $\{0\}\subsetneqq\Ic%\subseteq\Sg_2(\Hc)
\subsetneqq\Jc\subseteq\Bc(\Hc)$ one has $U_{\Ic}(H)\subsetneqq U_{\Jc}(H)$, 
and for any $\Sc_0\in\Decomp(\Hc)$, the $U_{\Jc}(H)$-orbit $\alpha(U_{\Jc}(H)\times\{\Sc_0\})$  
partitions into uncountably many $U_{\Ic}(H)$-orbits. 
 
To see this, 
first use Proposition~\ref{descr} for the ideal $\Jc$ to construct 
$\Cc_0:=\Psi(\Sc_0)\in\Cartan(\Jc)$ with its $U_{\Jc}(\Hc)$-conjugacy class $\Oc$. 
It follows by Theorems \ref{uncountable} and \ref{suppl3} that there exists 
an uncountable family $\{\Cc_i\}_{i\in I}$ of elements of $\Oc$ 
that pairwise fail to be $U_{\Ic}(\Hc)$-conjugate to each other. 
Then $\{\Psi^{-1}(\Cc_i)\}_{i\in I}$ is an uncountable family of elements of $\Decomp(\Hc)$ 
that (because of the commutative diagram from Remark~\ref{equivar}) 
belong to the $U_{\Jc}(\Hc)$-orbit of $\Sc_0\in\Decomp(\Hc)$ and yet 
pairwise fail to be mapped to each other by the action of $U_{\Ic}(H)$ on  $\Decomp(\Hc)$ via~$\alpha$. 
Thus there exist uncountably many distinct $U_{\Ic}(H)$-orbits contained in 
the $U_{\Jc}(H)$-orbit $\alpha(U_{\Jc}(H)\times\{\Sc_0\})$. 
\end{remark}

\section{Differentiable structures on conjugacy classes of Cartan subalgebras}

In order to state the next proposition, we recall that for every unital $C^*$-algebra $\Ac$,  
its unitary group $U_{\Ac}:=\{u\in\Ac\mid u^*u=uu^*=\1\}$ has the structure of a Banach-Lie group.  
Moreover, for every Banach-Lie group $G$ and every automorphism group $S$ of $G$,  
one can define the semidirect product $S\ltimes G$, 
and this has a unique Banach-Lie group structure for which $G$ is an open subgroup. 
See \cite{Be06} for background information on these constructions of Banach-Lie groups. 

We now frame this in the specific setting to be used in this section. 
There is a natural action
of the permutation group $\SS_\infty$ by $*$-automorphisms of the $C^*$-algebra $\ell^\infty(\NN)$, 
that is, every $\sigma\in\SS_\infty$ defines the $*$-automorphism $\alpha_\sigma\colon \ell^\infty(\NN)\to\ell^\infty(\NN)$ 
by $\alpha_\sigma(z)=\langle z_{\sigma^{-1}(n)}\rangle_{n=1}^\infty$ for every 
$z=\langle z_n\rangle_{n=1}^\infty\in\ell^\infty(\NN)$. 
One has $\alpha_{\sigma\tau}=\alpha_{\sigma}\alpha_{\tau}$ for all $\sigma,\tau\in\SS_\infty$. 

By restricting this action of $\SS_\infty$ from the $C^*$-algebra $\ell^\infty(\NN)$ 
to its unitary group $U(\ell^\infty(\NN))\simeq\TT^{\NN}$, 
where $\TT:=\{z\in\CC\mid \vert z\vert=1\}$, 
we obtain an action of $\SS_\infty$ by automorphisms of the group $U(\ell^\infty(\NN))$. 
From this we can then construct the semidirect product $\SS_\infty\ltimes U(\ell^\infty(\NN))$, 
whose underlying set is the Cartesian product $\SS_\infty\times U(\ell^\infty(\NN))$ 
and whose group operation is  
$$(\sigma,z)\cdot(\tau,w)=(\sigma\tau,\alpha_{\tau^{-1}}(z)w) $$
for all $\sigma,\tau\in \SS_\infty$ and $z,w\in U(\ell^\infty(\NN))$. 
Since $\ell^\infty(\NN)$ is a unital $C^*$-algebra, 
its unitary group $U(\ell^\infty(\NN))$ is a Banach-Lie group. 
Moreover, by using the discrete topology of $\SS_\infty$, we endow the semidirect product $\SS_\infty\ltimes U(\ell^\infty(\NN))$ with the structure of a Banach-Lie group for which the connected component of the identity element is $U(\ell^\infty(\NN))$. 
%Note that the latter group is pathwise connected, hence connected, 
%since $\ell^\infty(\NN))$ is actually a $W^*$-algebra. 

\begin{proposition}\label{isotropy}
If $\Sc\in\Decomp(\Hc)$, then 
%isotropy subgroup 
$\U(\Hc)_{\Sc}$
and the semidirect product $\SS_\infty\ltimes U(\ell^\infty(\NN))$ 
are isomorphic Banach-Lie groups.  
\end{proposition}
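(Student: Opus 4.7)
The plan is to fix, once and for all, a choice of unit vector $v_n$ in each one-dimensional component $\Vc_n$ of $\Sc = \{\Vc_n\}_{n\ge1}$. This yields an orthonormal basis $v=\{v_n\}_{n\ge 1}$ of $\Hc$, with respect to which we can speak of the diagonal operators $\Dc(v)$ and the permutation unitaries $V_\sigma$ for $\sigma\in\SS_\infty$. Using this basis I would define the candidate map
\[
\Phi\colon \SS_\infty\ltimes U(\ell^\infty(\NN))\longrightarrow U(\Hc)_\Sc,\qquad
(\sigma,u)\longmapsto V_\sigma\cdot\diag(u),
\]
and prove it is a Banach–Lie group isomorphism.

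First I would verify that $\Phi$ lands in $U(\Hc)_\Sc$: each $V_\sigma$ permutes the $\CC v_n$ according to $\sigma$ and hence preserves $\Sc$, while each $\diag(u)$ fixes every $\CC v_n$; the composition therefore fixes $\Sc$ setwise. Next I would prove bijectivity. Surjectivity is the content of Proposition~\ref{suppl1} in disguise: any $W\in U(\Hc)_\Sc$ satisfies $W\Dc(v)W^{-1}=\Dc(v)$, so in particular $W(\Dc(v)\cap\Fc(\Hc))W^*\subseteq\Dc(v)$, and Proposition~\ref{suppl1} produces a unique $\sigma\in\SS_\infty$ and unique $u\in\TT^\NN$ with $W=V_\sigma\diag(u)$. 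Injectivity follows from the same uniqueness clause.

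For the group law I would compute
\[
V_\sigma\diag(u)\cdot V_\tau\diag(u')
=V_\sigma V_\tau\cdot\bigl(V_\tau^{-1}\diag(u)V_\tau\bigr)\cdot\diag(u').
\]
Since conjugation of a diagonal operator by $V_\tau$ simply permutes its diagonal entries, we have $V_\tau^{-1}\diag(u)V_\tau=\diag(\alpha_{\tau^{-1}}(u))$, and multiplying with the remaining $\diag(u')$ gives $\diag(\alpha_{\tau^{-1}}(u)\cdot u')$. This matches exactly the semidirect product operation $(\sigma,u)\cdot(\tau,u')=(\sigma\tau,\alpha_{\tau^{-1}}(u)\cdot u')$ appearing in the statement, showing $\Phi$ is a group homomorphism.

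Finally I would verify that $\Phi$ is smooth with smooth inverse. The source $\SS_\infty\ltimes U(\ell^\infty(\NN))$ is a disjoint union (over $\SS_\infty$ with the discrete topology) of cosets, each a translate of the Banach–Lie group $U(\ell^\infty(\NN))$. On the component indexed by $\sigma$, $\Phi$ is just $u\mapsto V_\sigma\diag(u)$, manifestly real-analytic into $U(\Hc)$ and onto the coset $V_\sigma\bigl(U(\Hc)\cap\Dc(v)\bigr)$. Conversely, the map $U(\Hc)\cap\Dc(v)\to U(\ell^\infty(\NN))$ reading off the diagonal entries is real-analytic. Thus $\Phi$ is a diffeomorphism on each component, and hence a Banach–Lie group isomorphism globally, where we endow $U(\Hc)_\Sc$ with the (unique) structure for which each coset of its identity component $U(\Hc)\cap\Dc(v)$ is open. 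The main obstacle is bookkeeping the conventions: one has to check carefully that the appearance of $\alpha_{\tau^{-1}}$ (rather than $\alpha_\tau$) in the semidirect product matches the convention $V_\sigma(b_n)=b_{\sigma^{-1}(n)}$ used in the paper so that $\Phi$ is a genuine homomorphism and not an antihomomorphism; once the conventions are synchronized, everything else is a direct computation.
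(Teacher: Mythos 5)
Your proposal is correct and follows essentially the same route as the paper: both proofs fix a unit vector in each line of $\Sc$ and factor every element of $U(\Hc)_{\Sc}$ as a permutation unitary times a diagonal unitary, matching this factorization against the semidirect product structure. The only cosmetic differences are that you build the map from $\SS_\infty\ltimes U(\ell^\infty(\NN))$ into $U(\Hc)_{\Sc}$ (the paper goes the other way) and obtain existence and uniqueness of the factorization from Proposition~\ref{suppl1} rather than directly from the induced permutation of the lines of $\Sc$ --- which is legitimate, since that proposition is proved independently by matrix computations, and your closing caveat about synchronizing the $V_\sigma$ convention is exactly the bookkeeping that remains.
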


\begin{proof}
Recall from Definition~\ref{action} that $U(\Hc)_{\Sc}$ is the class of unitaries that fix 
the orthogonal one dimensional decomposition $\Sc$ of $\Hc$.
Let us consider any labeling of the elements of $\Sc$ by natural numbers,
$\Sc=\{\Vc_n\mid n\in\NN\}$, and for every $n\in\NN$ pick a unit vector $\vv_n\in\Vc_n$.
If $W\in\U(\Hc)_{\Sc}$, then we obtain a bijection of $\Sc$ given by $\Vc\mapsto W(\Vc)$,
which in turn defines a permutation $\sigma(W)\in\SS_\infty$.
It follows that the unitary operator $\widetilde{W}:=V_{\sigma(W)}W$ has the property
$\widetilde{W}(\Vc)=\Vc$ for every $\Vc\in\Sc$,
hence for every $n\in\NN$ there exists $z_n(W)\in\CC$ such that $\vert z_n(W)\vert=1$
and $\widetilde{W}\vv_n=z_n(W)\vv_n$.
Then $z(W)=\{z_n(W)\}_{n\in\NN}$ is a unitary element in the $C^*$-algebra $\ell^\infty(\NN)$,
and it is easily checked that the mapping $W\mapsto (\sigma(W),z(W))$ is a group isomorphism 
as in the statement of the proposition.
\end{proof}

In the following statement, for every Cartan subalgebra $\Cc$ of a proper operator ideal $\Ic$,
we denote by $\Dc(\Cc)$ the set of all operators in $\Bc(\Hc)$ which are diagonal with respect to $\Psi^{-1}(\Cc)\in\Decomp(\Hc)$. 
Equivalently, $\Dc(\Cc)$ is the set of all operators in $\Bc(\Hc)$ that commute with every operator in $\Cc$, 
or also, $\Dc(\Cc)$ is the closure of $\Cc$ with respect to the weak operator topology. 

\begin{corollary}\label{isotropy_id}
Let $\Ic$ be a proper operator ideal in $\Bc(\Hc)$ and $\Cc\in\Cartan(\Ic)$, and consider
the corresponding normalizer subgroup of the group $U_{\Ic}(\Hc)$, 
$$U_{\Ic,\Cc}(\Hc):=\{W\in U_{\Ic}(\Hc)\mid W\Cc W^{-1}=\Cc\}.$$
Then the groups 
$$U_{\Ic,\Cc}(\Hc)\text{ and } \SS_{\fin}\ltimes (U(\Hc)\cap(\1+(\Dc(\Cc)\cap\Ic)))$$
are isomorphic, where the latter semidirect product is defined by using the natural action of
the group of finite permutations $\SS_{\fin}$ on the group of unitary operators in $\1+\Ic$ which are diagonal with respect to
$\Psi^{-1}(\Cc)\in\Decomp(\Hc)$. 
If moreover $\Ic$ is a complete normed ideal, then the above groups are isomorphic Banach-Lie groups. 
\end{corollary}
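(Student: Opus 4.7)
The plan is to intersect the description of the full normalizer $U(\Hc)_{\Sc}$ provided by Proposition~\ref{isotropy} with the smaller unitary group $U_{\Ic}(\Hc)$. First I would set $\Sc:=\Psi^{-1}(\Cc)\in\Decomp(\Hc)$ (Proposition~\ref{descr}) and fix a labeling $\Sc=\{\Vc_n\mid n\ge 1\}$ together with unit vectors $\vv_n\in\Vc_n$, so that $\Dc(\Cc)$ becomes the set of operators diagonal with respect to the orthonormal basis $\vv=\{\vv_n\}_{n\ge 1}$. By Remark~\ref{equivar}\eqref{equivar_item2} one has $U_{\Ic,\Cc}(\Hc)=U_{\Ic}(\Hc)\cap U(\Hc)_{\Sc}$, and Proposition~\ref{isotropy} provides the group isomorphism $\Phi\colon U(\Hc)_{\Sc}\to\SS_\infty\ltimes U(\ell^\infty(\NN))$, $W\mapsto(\sigma(W),z(W))$, where $W=V_{\sigma(W)^{-1}}\widetilde W$ with $\widetilde W$ diagonal with respect to $\vv$ and eigenvalue sequence $z(W)$.

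Next I would show that $\Phi$ restricts to a group isomorphism onto the subgroup $\SS_{\fin}\ltimes(U(\Hc)\cap(\1+(\Dc(\Cc)\cap\Ic)))$. For $W=V_\sigma D\in U(\Hc)_{\Sc}$ (with $D$ unitary and diagonal w.r.t.~$\vv$) the decomposition
$$W-\1=V_\sigma(D-\1)+(V_\sigma-\1)$$
is the key identity. If $W\in U_{\Ic,\Cc}(\Hc)$, then $W-\1\in\Ic\subseteq\Kc(\Hc)$, so the second conclusion of Proposition~\ref{suppl1} (applied with respect to the basis $\vv$) forces $\sigma\in\SS_{\fin}$; hence $V_\sigma-\1\in\Fc(\Hc)\subseteq\Ic$, and consequently $V_\sigma(D-\1)\in\Ic$, which since $V_\sigma$ is unitary and $\Ic$ is a $\Bc(\Hc)$-ideal yields $D-\1\in\Ic$, i.e.~$D\in\1+(\Dc(\Cc)\cap\Ic)$. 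The converse implication is immediate from Proposition~\ref{S_fin} (which gives $V_\sigma-\1\in\Fc(\Hc)\subseteq\Ic$ whenever $\sigma\in\SS_{\fin}$) together with the identity displayed above. This establishes the algebraic isomorphism.

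For the Banach-Lie group refinement, assume $\Ic$ is a complete normed ideal. Then $\Dc(\Cc)\cap\Ic$ is a closed $*$-subalgebra of $\Ic$, so its unitary subset $U(\Hc)\cap(\1+(\Dc(\Cc)\cap\Ic))$ carries a natural Banach-Lie group structure as the unitary group of the unital $C^*$-algebra $\CC\1+(\Dc(\Cc)\cap\Kc(\Hc))$ (or equivalently as a Banach-Lie subgroup of the ambient Banach-Lie group $U_{\Ic}(\Hc)$). The natural action of $\SS_{\fin}$ on $\Dc(\Cc)\cap\Ic$ by permuting diagonal entries is implemented by conjugation $X\mapsto V_\sigma X V_\sigma^{-1}$ with $V_\sigma$ unitary, hence acts by isometric $*$-automorphisms of $\Dc(\Cc)\cap\Ic$ (using that the ideal norm is invariant under unitary conjugation). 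Endowing $\SS_{\fin}$ with the discrete topology and forming the semidirect product with this smooth action produces a Banach-Lie group whose identity component coincides with $U(\Hc)\cap(\1+(\Dc(\Cc)\cap\Ic))$, as in the construction recalled before Proposition~\ref{isotropy}. Finally one checks that the map $(\sigma,D)\mapsto V_\sigma D$ is smooth with smooth inverse onto $U_{\Ic,\Cc}(\Hc)$, which follows because in a neighbourhood of $\1$ only $\sigma=\id$ can occur (the estimate $\Vert V_\sigma\bb_n-\bb_n\Vert\in\{0,\sqrt 2\}$ used in Proposition~\ref{S_fin}), so locally the map reduces to $D\mapsto D$, and on other cosets it is a translate by the finite-rank unitary $V_\sigma$.

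The step I expect to be the trickiest is the last one, namely verifying that the bijection is actually a diffeomorphism of Banach-Lie groups rather than merely a continuous isomorphism; this relies on showing that $U_{\Ic,\Cc}(\Hc)$ inherits from $U_{\Ic}(\Hc)$ a Banach-Lie subgroup structure whose connected identity component is exactly $U(\Hc)\cap(\1+(\Dc(\Cc)\cap\Ic))$, which in turn hinges on the local separation of finite permutations from the identity in the $\Ic$-norm topology.
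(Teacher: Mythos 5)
Your argument is correct and is exactly the expansion the paper intends by its one-line proof ``Use Proposition~\ref{isotropy} along with Proposition~\ref{S_fin}'': restrict the isomorphism $W\mapsto(\sigma(W),z(W))$ of Proposition~\ref{isotropy} to $U_{\Ic,\Cc}(\Hc)=U_{\Ic}(\Hc)\cap U(\Hc)_{\Psi^{-1}(\Cc)}$, extract $\sigma\in\SS_{\fin}$ from compactness of $W-\1$ (the last part of Proposition~\ref{suppl1}, equivalently the argument proving Proposition~\ref{S_fin}), and then peel off the diagonal factor via $W-\1=V_\sigma(D-\1)+(V_\sigma-\1)$. One cosmetic slip: the identity component should be described as the unitary group of the unital Banach $*$-algebra $\CC\1+(\Dc(\Cc)\cap\Ic)$ with norm $\vert z\vert+\Vert T\Vert_{\Ic}$ (as in the proof of Theorem~\ref{smooth}), not of a $C^*$-algebra built from $\Dc(\Cc)\cap\Kc(\Hc)$; this does not affect the rest of your argument.
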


\begin{proof}
Use Proposition~\ref{isotropy} along with Proposition~\ref{S_fin}.
\end{proof}

\begin{theorem}\label{smooth}
Assume the proper ideal $\Ic$ in $\Bc(\Hc)$ is symmetrically normed and
its underlying Banach space is the dual of a Banach space.
Then for every $\Cc\in\Cartan(\Ic)$ the corresponding isotropy group $U_{\Ic,\Cc}(\Hc)$
is a Banach-Lie subgroup of $U_{\Ic}(\Hc)$ and the $U_{\Ic}(\Hc)$-orbit with respect to the group action $\beta$
has a structure of a smooth homogeneous space of the Banach-Lie group $U_{\Ic}(\Hc)$.
Moreover, the Banach manifolds obtained in this way for various choices of $\Cc\in\Cartan(\Ic)$
are diffeomorphic to each other.
\end{theorem}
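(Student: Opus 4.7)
Since $\Ic$ is a complete normed (indeed symmetrically normed) ideal, $U_{\Ic}(\Hc)=U(\Hc)\cap(\1+\Ic)$ inherits from the unital Banach $*$-algebra $\CC\cdot\1+\Ic$ the structure of a Banach--Lie group with Lie algebra $\ug_{\Ic}:=\{X\in\Ic\mid X^*=-X\}$. My plan is to verify the Banach--Lie subgroup criterion for $U_{\Ic,\Cc}(\Hc)$, namely that it is a closed subgroup whose Lie algebra is a closed, topologically complemented subspace of $\ug_{\Ic}$, and then to invoke the standard quotient-manifold theorem for Banach--Lie groups to equip the orbit with a smooth homogeneous-space structure. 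By Corollary~\ref{isotropy_id}, $U_{\Ic,\Cc}(\Hc)\cong\SS_{\fin}\ltimes(U(\Hc)\cap(\1+\Dc(\Cc)\cap\Ic))$; since $\SS_{\fin}$ carries the discrete topology, the identity component is the second factor, with Lie algebra $\ug_{\Ic,\Cc}:=\{X\in\Dc(\Cc)\cap\Ic\mid X^*=-X\}$. Because $\Dc(\Cc)$ is norm-closed in $\Bc(\Hc)$ and the $\Ic$-norm dominates the operator norm, $\ug_{\Ic,\Cc}$ is automatically closed in $\ug_{\Ic}$; the real content is producing a topological complement.

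Set $\Sc:=\Psi^{-1}(\Cc)\in\Decomp(\Hc)$, and for each finite $F\subseteq\Sc$ define $E_F(T):=\sum_{\Vc\in F}P_{\Vc}T P_{\Vc}$. A standard trick writes $E_F$ as a finite average of unitary conjugations by elements of $U(\Hc)$ of the form $\sum_{\Vc\in F}\omega^{k(\Vc)}P_{\Vc}$ for suitable roots of unity $\omega$, so $E_F$ is a contraction of $\Ic$ in the symmetric norm. The net $\{E_F(T)\}_F$ is thereby bounded; using the hypothesis that the Banach space underlying $\Ic$ is a dual space, Banach--Alaoglu furnishes a weak-$*$ cluster point $E_{\Sc}(T)\in\Ic$ with $\Vert E_{\Sc}(T)\Vert_{\Ic}\le\Vert T\Vert_{\Ic}$. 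The strong-operator limit $\sum_{\Vc\in\Sc}P_{\Vc}T P_{\Vc}$ exists in $\Bc(\Hc)$, and testing against rank-one operators in a norming predual forces $E_{\Sc}(T)$ to equal this limit, hence to be independent of the chosen subnet. The resulting map $E_{\Sc}\colon\Ic\to\Ic$ is a continuous $*$-preserving idempotent with range $\Dc(\Cc)\cap\Ic$, which yields the desired topological splitting $\ug_{\Ic}=\ug_{\Ic,\Cc}\oplus(\ug_{\Ic}\cap\Ker E_{\Sc})$.

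With the splitting established, the quotient $U_{\Ic}(\Hc)/U_{\Ic,\Cc}(\Hc)$ carries a unique smooth Banach-manifold structure, and the natural bijection of this quotient with the orbit $\beta_{U_{\Ic}(\Hc)}(\Cc)$ transports this structure to the orbit as a $U_{\Ic}(\Hc)$-homogeneous space. For the final equivariant-diffeomorphism claim, given $\Cc_1,\Cc_2\in\Cartan(\Ic)$, invoke Remark~\ref{spectral} to choose $V\in U(\Hc)$ with $V\Cc_1V^*=\Cc_2$. Since $\Ic$ is a two-sided ideal, $T\mapsto VTV^*$ is an isometric $*$-automorphism of $\Ic$, induces a Banach--Lie automorphism of $U_{\Ic}(\Hc)$ carrying $U_{\Ic,\Cc_1}(\Hc)$ onto $U_{\Ic,\Cc_2}(\Hc)$, and descends to a diffeomorphism between the two orbits, equivariant along this conjugation automorphism. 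The main obstacle in the whole argument is the construction of $E_{\Sc}$ as a \emph{bounded} projection $\Ic\to\Ic$: without the dual-space hypothesis, the partial sums $E_F(T)$ converge only in the strong operator topology of $\Bc(\Hc)$, with no a priori control over the $\Ic$-norm of the limit. The weak-$*$ compactness of closed balls supplied by the dual-space hypothesis is precisely what upgrades this strong-operator convergence to a norm-bounded splitting of $\ug_{\Ic}$, which is what the Banach--Lie quotient construction requires.
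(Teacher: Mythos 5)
Your proof is correct and shares the paper's overall architecture --- realize $U_{\Ic,\Cc}(\Hc)$ as a Banach--Lie subgroup by exhibiting a closed complement to its Lie algebra inside $\ug_{\Ic}(\Hc)$, pass to the quotient manifold, transport the structure to the orbit by the equivariant bijection, and obtain the diffeomorphisms between different orbits by conjugating with a full unitary supplied by Remark~\ref{spectral} --- but the crucial complementation step is carried out by a genuinely different argument. The paper rewrites the commutation conditions $TP_n=P_nT$ as the fixed-point equations $e^{\ie sP_n}Te^{-\ie sP_n}=T$ for an abelian group $\Gamma$ of conjugations, notes that each such conjugation is a contraction of the symmetrically normed ideal $\Ic$, and invokes the abstract fixed-point theorem \cite[Th. 3.1(a)]{BP07} (an amenability/invariant-mean argument, which is where the dual-space hypothesis enters) to produce a bounded projection onto the fixed-point set $\ug_{\Ic,\Cc}(\Hc)$. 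You instead build the projection concretely as the diagonal expectation $E_{\Sc}$: the finite block compressions $E_F$ are contractions by the roots-of-unity averaging trick, and Banach--Alaoglu supplies a weak-$*$ cluster point of the bounded net $\{E_F(T)\}_F$ in $\Ic$ with controlled norm. Your route is more self-contained and identifies the complement explicitly as the ``off-diagonal part'' $\Ker E_{\Sc}$, whereas the paper's citation of \cite{BP07} is shorter on the page and sidesteps the one delicate point in your argument: identifying the weak-$*$ cluster point with the strong-operator sum $\sum_{\Vc\in\Sc}P_{\Vc}TP_{\Vc}$ requires the weak-$*$ topology to detect matrix entries, so it implicitly uses the standard Gohberg--Krein predual rather than an arbitrary one (a compatibility that the application of \cite{BP07} also needs, but which is absorbed into that theorem's hypotheses). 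One further small difference: since a closed subgroup of a Banach--Lie group need not be a Lie subgroup, the paper secures the Banach--Lie group structure on $U_{\Ic,\Cc}(\Hc)$ in the inherited topology by exhibiting it as an algebraic subgroup of $(\CC\1+\Ic)^\times$ of degree $\le 2$; your appeal to Corollary~\ref{isotropy_id} serves the same purpose, the identity component being the unitary group of the closed subalgebra $\CC\1+(\Dc(\Cc)\cap\Ic)$, so this is a presentational rather than a substantive gap.
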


\begin{proof}
Let us consider the complex associative Banach $*$-algebra $\Bg=\CC\1+\Ic$ endowed with the
norm given by $\Vert z\1+T\Vert=\vert z\vert+\Vert T\Vert_{\Ic}$,
and denote by $\Bg^\times$ its group of invertible elements.
Define the bounded linear functional $\psi\colon\Bg\to\CC$, $\psi(z\1+T)=z$.
If we denote by $\{P_n\}_{n\ge 1}$ the family of orthogonal projections
corresponding to the 1-dimensional subspaces in $\Psi^{-1}(\Cc)\in\Decomp(\Hc)$, 
then it is easily seen that 
$$U_{\Ic,\Cc}(\Hc)=\{V\in\Bg^\times\mid \psi(V)=1,\, V^*V=\1,\, (\forall n\ge 1)\ [VP_nV^{-1},P_n]=0\},$$
the latter condition being equivalent to the $V$-change of basis preserving the 1-dimensional subspaces.
This observation shows that $U_{\Ic,\Cc}(\Hc)$ is an algebraic subgroup of $\Bg^\times$ of degree $\le 2$ 
(see Definition~\ref{algebraic} below)
and therefore it follows 
that $U_{\Ic,\Cc}(\Hc)$
has the structure of a Banach-Lie group whose underlying topology is
its topology inherited from $\Bg$  
(see for instance \cite[Th. 4.13]{Be06} and its proof).
On the other hand, the Lie algebra of $U_{\Ic,\Cc}(\Hc)$
is equal to the Lie algebra of any of its open subgroups.
Since $\SS_{\fin}$ is a discrete group, it follows by Corollary~\ref{isotropy_id}
that $U(\Hc)\cap(\1+(\Dc(\Cc)\cap\Ic))$ is an open subgroup of $U_{\Ic,\Cc}(\Hc)$,
hence we obtain at last the following description for the Lie algebra of $U_{\Ic,\Cc}(\Hc)$:
\begin{equation}\label{smooth_proof_eq1}
\ug_{\Ic,\Cc}(\Hc)=\{T\in\Ic\mid T^*=-T,\, (\forall n\in\NN)\ TP_n=P_nT\}.
\end{equation}
Recall from \cite[Prop. 9.28]{Be06} that the Lie algebra of $U_{\Ic}(\Hc)$ is
$$\ug_{\Ic}(\Hc)=\{T\in\Ic\mid T^*=-T\}.$$
In order to prove that $U_{\Ic,\Cc}(\Hc)$ is a Banach-Lie subgroup of $U_{\Ic}(\Hc)$,
we still need to check that the closed real linear subspace $\ug_{\Ic,\Cc}(\Hc)$
has a direct complement in $\ug_{\Ic}(\Hc)$ 
(see the \cite[Def. 4.1(iii)]{Be06}).

In view of \cite[Th. 3.1(a)]{BP07},
it suffices to construct a commutative semigroup of contractive linear maps on $\ug_{\Ic}(\Hc)$
whose fixed point set is precisely $\ug_{\Ic}(\Hc)$.
To this end, note that the condition $TP_n=P_nT$ from \eqref{smooth_proof_eq1}
is equivalent to the family of fixed point equations $e^{isP_n}Te^{-isP_n}=T$ for all $s\in\RR$.
Now for every $n\ge 1$ and $s\in\RR$ define
$$\gamma_{s,n}\colon\Ic\to\Ic,\quad \gamma_{s,n}(T)=e^{isP_n}Te^{-isP_n}$$
and set
$$\Gamma=\{\gamma_{s_1,n_1}\circ\dots\circ\gamma_{s_k,n_k}\mid k\ge 1,\, n_1,\dots,n_k\ge 1,\, s_1,\dots,s_k\in\RR\},$$
which is an abelian group.
The above observations show that
$$\ug_{\Ic,\Cc}(\Hc)=\{T\in\ug_{\Ic}(\Hc)\mid (\forall \gamma\in\Gamma)\quad \gamma(T)=T\}.$$
Since $\Ic$ is a symmetrically normed ideal, it follows that for every $\gamma\in\Gamma$ we have
$\Vert\gamma\Vert=1$. 
Hence we can use \cite[Th. 3.1(a)]{BP07} to obtain a bounded linear operator 
$P\colon \ug_{\Ic}(\Hc)\to \ug_{\Ic}(\Hc)$ with $P^2=P$ and $\Ran P=\ug_{\Ic,\Cc}(\Hc)$.  
Then $\ug_{\Ic,\Cc}(\Hc)+\Ker P=\ug_{\Ic}(\Hc)$ and $\ug_{\Ic,\Cc}(\Hc)\cap\Ker P=\{0\}$, 
that is, $\Ker P$ 
is a direct complement for $\ug_{\Ic,\Cc}(\Hc)$.

This completes the proof of the fact that $U_{\Ic,\Cc}(\Hc)$ is a Banach-Lie subgroup of $U_{\Ic}(\Hc)$,
and then the quotient $U_{\Ic}(\Hc)/U_{\Ic,\Cc}(\Hc)$ has the natural structure of a smooth homogeneous space 
(see for instance \cite[Th. 4.19]{Be06}).
On the other hand, recall that $U_{\Ic,\Cc}(\Hc)$ is the isotropy group for the action of $U_{\Ic}(\Hc)$
on the $U_{\Ic}(\Hc)$-conjugacy class of the Cartan subalgebra $\Cc$ of $\Ic$. 
Hence we have a canonical $U_{\Ic}(\Hc)$-equivariant bijection from that conjugacy class onto $U_{\Ic}(\Hc)/U_{\Ic,\Cc}(\Hc)$. 
This makes the conjugacy class into a smooth homogeneous space.

Finally, if $\Cc_1,\Cc_2\in\Cartan(\Ic)$, then 
by Remark~\ref{spectral} 
there exists a unitary operator $V\in U(\Hc)$ such that $\beta_V(\Cc_1)=V\Cc_1V^{-1}=\Cc_2$.
The mapping $X\mapsto VXV^{-1}$ also gives an automorphism of the Banach-Lie group $U_{\Ic}(\Hc)$
that maps the isotropy group $U_{\Ic,\Cc_1}(\Hc)$ onto the isotropy group $U_{\Ic,\Cc_2}(\Hc)$,
and therefore gives rise to a diffeomorphism $U_{\Ic}(\Hc)/U_{\Ic,\Cc_1}(\Hc)\simeq U_{\Ic}(\Hc)/U_{\Ic,\Cc_2}(\Hc)$.
Therefore the $U_{\Ic}(\Hc)$-conjugacy classes of $\Cc_1$ and $\Cc_2$ are diffeomorphic to each other, and this concludes the proof.
\end{proof}

\begin{remark}
\normalfont
On the applicability of Theorem~\ref{smooth} we note the following.
It follows by the classical duality theory of symmetrically normed ideals
that there exist a lot of such ideals which are duals of Banach spaces,
including for instance the Schatten ideals $\Sg_p(\Hc)$ with $1\le p<\infty$;
see \cite[Sect. III.12]{GK69}, and also \cite[Sect 9.4]{Be06}.  
However, Theorem~\ref{smooth} cannot be applied for $\Ic=\Kc(\Hc)=\Sg_\infty(\Hc)$ 
since it is well known that this ideal has no predual.
\qed
\end{remark}

\begin{remark}
\normalfont
We now summarize some results on conjugacy classes of maximal abelian self-adjoint subalgebras
of three types of algebras:
matrix algebras, operator ideals, and $C^*$-algebras.
This summary may be helpful in placing
Theorem~\ref{smooth} in some perspective.
\begin{itemize}
\item As already mentioned in the Introduction, if we replace in Definition~\ref{action}
the ideal $\Ic$ by a matrix algebra $M_n(\CC)$, then the action $\beta$ is transitive,
that is, it has only one orbit~(see Theorem~\ref{first_th} for a more general result).
\item It follows by Theorem~\ref{uncountable} that if $\Ic$ is a proper operator ideal in $\Bc(\Hc)$,
then the action $\beta$ restricted to the group $U_{\Ic}(\Hc)$ has uncountably many orbits.
\item If we replace in Definition~\ref{action}
the ideal $\Ic$ by a $C^*$-algebra of operators, then we still find infinitely many orbits,
however there may be only countably many ones; 
see 
%Remark~\ref{first_rem} 
\cite[Rem. 4.2]{BPW13b} 
for $\Ic=\Bc(\Hc)$.
\end{itemize}
\qed
\end{remark}

\begin{remark}
\normalfont
The method of proof of Theorem~\ref{smooth} can be used for constructing smooth structures on
the $U_{\Ic}(\Hc)$-orbits of other actions as well.
For instance, this is the case with the action
$$U_{\Ic}(\Hc)\times\Bc(\Hc)^{\sa}\to\Bc(\Hc)^{\sa},\quad (V,X)\mapsto VXV^{-1}$$
(see \cite[Th. 4.33]{Be06} for a related result).
\qed
\end{remark}

\section{Further problems}

\begin{problem}
\normalfont
Find a parameterization of the set of all $U_{\Ic}(\Hc)$-conjugacy classes of Cartan subalgebras of~$\Ic$, 
simpler than the bijective map~\eqref{equivar_eq1} in Remark~\ref{equivar}.
For instance, if $\Ic=\Bc(\Hc)$, then  
\cite[Rem. 4.2]{BPW13b} 
shows that $\{0,1,2,\dots\}\cup\{-\infty,\infty\}$ is such a set of parameters.
\end{problem}

\begin{problem}
\normalfont
A question whose affirmative answer would provide extra motivation for calling the masa's of
an operator ideal $\Ic$ as Cartan subalgebras of $\Ic$: 
Does there exist any generalization of Theorem~\ref{suppl6} beyond the class of complete separable norm ideals? 

More specifically, let $\Cc\in\Cartan(\Ic)$ be the set of all diagonal operators in $\Ic$
with respect to some fixed orthonormal basis in $\Hc$.
Then let $\Ac$ be the set of all operators given by matrices with the entry 1 on some off-diagonal position and 0 elsewhere.
Can we make sense of the assertion that every operator in $\Ic$ is an infinite linear combination of elements in $\Ac\cup\Dc$?

The problem is the convergence of such a linear combination. 
However, the main point would be to make sense of the assertion of Theorem~\ref{suppl6} for arbitrary ideals,
and in this case one should perhaps expect to have ``infinite linear combinations'' somehow similar to the formal power series,
which are also convergent in a suitable sense expressed in purely algebraic terms.
Such a suitable sense could be perhaps imagined for arbitrary operator ideals, by using the characteristic sets.
Note that we do not mean to sum up anything inside of the diagonal algebra $\Dc$, but rather to make sense of the fact that the farther an entry is from the diagonal, the smaller it should be thought of.

So the point is that main information of a matrix is concentrated on the main diagonal, and when we get far from the main diagonal, the information carried by the off-diagonal entries is less and less significant.
This would provide an infinite-dimensional version of the root-space decomposition that occurs in the proof of  \cite[Prop. 4.16]{BPW13a}.
\end{problem}
 
\begin{problem}
\normalfont
On Remark~\ref{equiv_bases},   
where for $e=\{e_n\}_{n\ge 1}$ and $f=\{f_n\}_{n\ge 1}$ orthonormal bases in $\Hc$,
we say they are $\Ic$-equivalent to each other if there exists $W\in U_{\Ic}(\Hc)$
such that $We_n=f_n$ for all $n\ge 1$.
It would be useful to study necessary or sufficient conditions of $\Ic$-equivalence, as for instance 
the following ones for the largest/smallest proper operator ideal: 
\begin{itemize}
\item In the compact case $\Ic = \Kc(\Hc)$, $e$ and $f$ are $\Ic$-equivalent to each other 
if and only if $\Vert e_n-f_n\Vert\to0$.  
\item And for the finite ranks $\Ic = \Fc(\Hc)$, the $\Ic$-equivalence condition is 
$e_n = f_n$ for all but finitely many $n$. 
\end{itemize}
In fact, the above condition in the case $\Ic = \Fc(\Hc)$ is clearly sufficient, 
however it is not difficult to see that it is not necessary. 
To this end define $v:=\sum\limits_{n\ge1}\alpha_n e_n\in\Hc$ with $\Vert v\Vert=1$, 
where $\langle\alpha_n\rangle_{n=1}^\infty\in\ell^2$ is any sequence with $\alpha_n\ne0$ 
for all but finitely many subscripts, for instance $\alpha_n=\frac{1}{2^{n/2}}$ for every $n\ge 1$. 
If $P:=(\cdot,v)v\in\Bc(\Hc)$ is the orthogonal projection onto the 1-dimensional subspace spanned by $v$,  
then $V:=\1-2P$ is a unitary (self-adjoint) operator, 
hence $\{f_n:=Ve_n\mid n\ge 1\}$ is an othonormal basis in $\Hc$. 
Since $V\in\1+\Fc(\Hc)$, one has $V\in U_{\Fc(\Hc)}$, hence 
the orthonormal bases $e=\{e_n\}_{n\ge 1}$ and $f=\{f_n\}_{n\ge 1}$ are 
$\Fc(\Hc)$-equivalent. 
However, $e_n-f_n=2Pe_n=2\bar\alpha_n v\ne 0$ for infinitely many $n$. 

Perhaps using a slightly more complicate example, one could also prove that the above 
assertion on $\Kc(\Hc)$-equivalent bases also needs extra assumptions in order to hold true.   
\end{problem}

\begin{problem}
\normalfont
Can one obtain sufficient conditions for $U_{\Ic}(\Hc)$-diagonalizability 
of normal operators in a $\Bc(\Hc)$-ideal $\Ic$? 
See Remark~\ref{suffcond} for the case when $\Ic$ is the Hilbert-Schmidt ideal, 
and also \cite[Prop. 4.3]{BPW13b} for the limit cases when $\Ic$ is the ideal of finite-rank operators 
(when no extra condition is needed) or when $\Ic=\Bc(\Hc)$. 
The case when $\Ic$ is the trace class would be particularly interesting. 
\end{problem}

\begin{problem}
\normalfont
What information does Proposition~\ref{July6_cor}\eqref{July6_cor_item3} provide for the size of $\Dc_{\Ic}$, 
beyond the fact that $\Pi^{-1} \Dc_{\Ic} \subset \Ic^2$?
\end{problem}
 
\begin{problem}
\normalfont
On the structure of \emph{some subsets} of $\Dc_{\Ic}$ and $\Dc_\Ic^{\sa}$,  
namely for operators with spectral multiplicities one, 
we know from Proposition~\ref{7april2014} that they are both closed under finite permutations, 
and in fact the equivalence relation of one diagonal being a finite permutation of the other partitions 
both $\Dc_{\Ic}$ and $\Dc_\Ic^{\sa}$.
It would be perhaps interesting to see in what form 
the above mentioned proposition generalizes to arbitrary normal compact operators 
without any multiplicity restriction. 
\end{problem}

\appendix

\section{Lie theory for some infinite-dimensional algebraic groups}\label{sect3}

\subsection*{Infinite-dimensional linear algebraic reductive groups}
The notion of linear algebraic group in infinite dimensions requires
the following terminology.
If $\Ac$ is a real Banach space,
then a vector-valued continuous polynomial function on $\Ac$ of degree $\le n$
is a function $p\colon\Ac\to\Vc$, where $\Vc$ is another real Banach space,
such that for some continuous multilinear maps
$$\psi_k\colon
\underbrace{\Ac\times\cdots\times\Ac}_{\text{$k$ times}}
\to\Vc$$
(for $k=0,1,\ldots,n$) we have
$p(a)=\psi_n(a,\ldots,a)+\cdots+\psi_1(a)+\psi_0$
for every $a\in\Ac$, where $\psi_0\in\Vc$.

Now let $\Bg$ be a real \textit{associative} unital Banach algebra,
hence a real Banach space endowed with a bounded bilinear mapping $\Bg\times\Bg\to\Bg$, $(x,y)\mapsto xy$ 
which is associative and admits a unit element $\1\in\Bg$.
Then the set
$$\Bg^\times:=\{x\in\Bg\mid(\exists y\in\Bg)\ xy=yx=\1\}$$
is an open subset of $\Bg$ and
has the natural structure of a Banach-Lie group \cite[Example 6.9]{Up85}.
The Lie algebra of $\Bg^\times$ is
again the Banach space $\Bg$, viewed however as a \textit{nonassociative} Banach algebra,
more precisely as a Banach-Lie algebra whose Lie bracket is the bounded bilinear mapping
$\Bg\times\Bg\to\Bg$, $(x,y)\mapsto xy-yx$.

\begin{definition}\label{closed}
\normalfont
If $\Bg$ is a real associative unital Banach algebra and
$G$ is a closed subgroup of $\Bg^\times$, then
the \textit{Lie algebra of $G$} is
$$\gg:=\{x\in\Bg\mid(\forall t\in\RR)\quad\exp(tx)\in G\}.$$
%\qed
\end{definition}

\begin{remark}\label{closed_alg}
\normalfont
In the setting of Definition~\ref{closed},
the set $\gg$ is a closed Lie subalgebra of $\Bg$
\cite[Corollary 6.8]{Up85}.

In fact, since $G$ is a closed subset of $\Bg^\times$, it is easily seen that $\gg$ is closed in $\Bg$.
Moreover, by using the well-known formulas \cite[Proposition 6.7]{Up85}
$$\begin{aligned}
\exp(t(x+y))&=\lim_{k\to\infty}\Bigl(\exp(\frac{t}{k}x)\exp(\frac{t}{k}y)\Bigr)^k, \\
\exp(t^2[x,y])&=\lim_{k\to\infty}\Bigl(\exp(\frac{t}{k}x)\exp(\frac{t}{k}y)\exp(-\frac{t}{k}x)\exp(-\frac{t}{k}y)\Bigr)^{k^2}
\end{aligned}$$
which hold true for all $x,y\in\Bg$ and $t\in\RR$,
it follows that for every $x,y\in\gg$ we have $x+y\in\gg$ and $[x,y]\in\gg$.
Then it is easy to check that $\gg$ is a real linear subspace of $\Bg$.

Moreover, if $\Bg$ is endowed with a continuous involution such that for every $b\in G$ we have $b^*\in G$,
then for every $x\in\gg$ we have $x^*\in\gg$.
\end{remark}

\begin{definition}[\cite{HK77}]\label{algebraic}
\normalfont
Let $\Bg$ be a real  
associative unital Banach algebra, $n$ a positive integer,
and $G$ a subgroup of $\Bg^\times$.
We say that $G$ is an
\textit{algebraic group in $\Bg$ of degree $\le n$}
if we have
$$G=\{b\in \Bg^\times\mid (\forall p\in\Pc)\quad p(b,b^{-1})=0\}$$
for some set $\Pc$ of vector-valued continuous polynomial functions of degree $\le n$ on $\Bg\times\Bg$ 
(see \cite[Sect. 3]{BPW13b} for more details and examples).
Note that $G$ is a closed subgroup of $\Bg^\times$, hence its Lie algebra can be defined
as in Definition~\ref{closed}.

If moreover $\Bg$ is endowed with a continuous involution $b\mapsto b^*$
and for every $b\in G$ we have $b^*\in G$, then we say that the group $G$ is \textit{reductive}.
\end{definition}

\begin{definition}\label{algebraic_restr}
\normalfont
Let $\Bg$ be a real 
associative unital Banach algebra, $n$ a positive integer,
and $G$ an algebraic subgroup of $\Bg^\times$ of degree $\le n$
with the Lie algebra $\gg$ ($\subseteq\Bg$).
Then for every one-sided ideal $\Ig$ of $\Bg$,
the corresponding
\textit{$\Ig$-restricted} algebraic group is 
$$G_{\Ig}:=G\cap(\1+\Ig)$$
and the \textit{Lie algebra of $G$} is $\gg_{\Ig}:=\gg\cap\Ig$. 
\end{definition}

\begin{remark}\label{simple}
\normalfont
Here are some simple remarks
on algebraic structures that occur in the preceding definition. 
Let $\Bg$ be a unital ring and $\Ig$ be a one-sided ideal of $\Bg$.
\begin{enumerate}[(i)] 
\item\label{simple_item1} The set
$\Bg^\times\cap(\1+\Ig)$ is always a subgroup of the group $\Bg^\times$
of invertible elements in $\Bg$.

To see this, let us assume for instance that $\Ig\Bg\subseteq\Ig$.
Then $\Ig\Ig\subseteq\Ig$, hence $(\1+\Ig)(\1+\Ig)\subseteq\1+\Ig$,
and thus $(\1+\Ig)\cap\Bg^\times$ is closed under the product.
On the other hand, if $x\in\Ig$, $b\in\Bg$ and $(\1+x)b=\1$, then $x=\1-xb\in\1+\Ig\Bg\subseteq\1+\Ig$,
hence $(\1+\Ig)\cap\Bg^\times$ is also closed under the inversion.

\item\label{simple_item2}
By definition, every one-sided ideal of a real algebra is assumed to be a real linear subspace.
Therefore, if the unital ring $\Bg$ has the structure of a real algebra,
then $\Ig$ is an associative subalgebra of $\Bg$ and in particular $\Ig$
has the natural structure of a real Lie algebra with the Lie bracket
defined by $[x,y]:=xy-yx$ for all $x,y\in\Ig$.

\item\label{simple_item3} If $\Bg$ is a ring endowed with an involution $b\mapsto b^*$ and $\Ig$ is
a self-adjoint one-sided ideal of $\Bg$, then $\Ig$ is actually a two-sided ideal.

In fact, if we assume for instance $\Ig\Bg\subseteq\Ig$,
then for every $x\in\Ig$ and $b\in\Bg$ we have $x^*b^*\in\Ig$ hence $bx=(x^*b^*)^*\in\Ig$,
and thus $\Bg\Ig\subseteq\Ig$ as well.
\end{enumerate}
%\qed
\end{remark}

In connection with the following observation we emphasize
that the ideal $\Ig$ of the Banach algebra $\Bg$ is \textit{not} assumed to be closed.

\begin{lemma}\label{exp_id}
Let $\Bg$ be a real associative unital Banach algebra.
If $\Ig$ is a one-sided ideal of $\Bg$ and $x\in\Bg$,
then 
$$x\in\Ig \iff(\forall t\in\RR)\quad \exp(tx)\in(\1+\Ig)\cap\Bg^\times. $$
\end{lemma}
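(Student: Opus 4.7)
My plan is to reduce both directions of the lemma to the single identity
$$\exp(tx) - \1 \;=\; tx \cdot h(tx) \;=\; h(tx) \cdot tx,$$
where $h(u) := \sum_{k=0}^{\infty} u^k/(k+1)!$ is the entire power series (essentially $(e^u-1)/u$). The series defining $h(tx)$ converges absolutely in $\Bg$ for every $t \in \RR$ and commutes with $x$; since $h(0) = \1$, a routine norm estimate shows $\Vert h(tx) - \1\Vert \le e^{\vert t\vert \Vert x\Vert} - 1 < 1$ for $\vert t\vert$ small, so $h(tx)$ is invertible in $\Bg$ by a Neumann series, and $h(tx)^{-1}$ lies in the commutant of $x$ as well.

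For the forward implication, I would plug $x \in \Ig$ into the identity. The one-sided ideal property of $\Ig$ immediately places $\exp(tx) - \1 = tx \cdot h(tx)$ in $\Ig \cdot \Bg \subseteq \Ig$ when $\Ig$ is a right ideal, and places $\exp(tx) - \1 = h(tx) \cdot tx$ in $\Bg \cdot \Ig \subseteq \Ig$ when $\Ig$ is a left ideal; the invertibility $\exp(tx) \in \Bg^\times$ with inverse $\exp(-tx)$ is automatic. For the reverse implication, I would fix a single small $t \ne 0$ with $h(tx)$ invertible. The hypothesis $\exp(tx) \in \1 + \Ig$ gives $\exp(tx) - \1 \in \Ig$, and multiplying the identity by $h(tx)^{-1}$ on the side opposite to the ideal (i.e.\ on the right when $\Ig$ is a right ideal, on the left when $\Ig$ is a left ideal) yields $tx \in \Ig$; dividing by the scalar $t$ then gives $x \in \Ig$ by the linearity of $\Ig$.

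The main conceptual obstacle to be aware of is that $\Ig$ is explicitly \emph{not} assumed to be closed in $\Bg$, so the tempting derivative argument $x = \lim_{t \to 0}\,t^{-1}(\exp(tx) - \1)$ cannot be invoked: even though each $t^{-1}(\exp(tx) - \1)$ lies in $\Ig$, the limit need not. The factorization through $h$ is designed precisely to bypass this issue by extracting $x$ algebraically from a single value of $t$, via inversion in the commutant of $x$, with no limit process and therefore no closure hypothesis on $\Ig$.
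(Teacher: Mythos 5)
Your proof is correct and follows essentially the same strategy as the paper's: the forward direction factors $\exp(tx)-\1$ through $x$ and the one-sided ideal property, and the converse extracts $x$ algebraically from $\exp(t_0x)-\1$ at a single small $t_0$, precisely to avoid taking a limit into the possibly non-closed ideal. The only cosmetic difference is that the paper recovers $t_0x$ via the logarithm series $\log(\1-y)\in y\Bg$ with $y=\1-\exp(t_0x)$, whereas you invert the entire function $h(u)=(e^u-\1)/u$ by a Neumann series in the commutant of $x$; both devices accomplish the same extraction.
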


\begin{proof} Assume that we have for instance $\Ig\Bg\subseteq\Ig$.
If $x\in\Ig$ and $t\in\RR$ then $\exp(tx)\exp(-tx)=\exp(-tx)\exp(tx)=\1$,
hence $\exp(tx)\in\Bg^\times$.
Moreover
$$\exp(tx)=\1+tx+\frac{t^2}{2}x+\cdots\in\1+x\Bg\subseteq\1+\Ig\Bg\subseteq\1+\Ig.$$
Conversely, assume that $\exp(tx)\in\1+\Ig$ for every $t\in\RR$.
Since $\lim\limits_{t\to 0}\Vert\1-\exp(tx)\Vert=0$,
there exists $t_0\in\RR\setminus\{0\}$ such that $\Vert\1-\exp(t_0 x)\Vert<1$.
Recall that for every $y\in\Bg$ with $\Vert\1-y\Vert<1$, 
the series
$$\log(\1-y):=-\sum_{k=1}^\infty\frac{1}{k}y^k\in y\Bg $$
is uniformly convergent
and for $y=\1-\exp(t_0 x)$ we have $\log(\1-y)=t_0x$ 
(see \cite[Lemma 2.1]{Up85}).
Then $x\in y\Bg=(\1-\exp(t_0 x))\Bg$
and the hypothesis $\exp(t_0x)\in\1+\Ig$ implies $x\in\Ig\Bg=\Ig$. 
\end{proof}

\begin{theorem}\label{alg_indeed}
Let $\Bg$ be a real associative unital Banach algebra with a one-sided ideal $\Ig$.
If $G_{\Ig}$ is an $\Ig$-restricted algebraic group in $\Bg$,
then its Lie algebra is a Lie subalgebra of $\Ig$ and can be described as
$$\gg_{\Ig}=\{x\in \Bg\mid(\forall t\in\RR)\quad \exp(tx)\in G_{\Ig}\}.$$
\end{theorem}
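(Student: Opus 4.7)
The plan is to derive both assertions directly from Lemma~\ref{exp_id} together with the definitions of $\gg$ and $G_{\Ig}$, and the ring-theoretic remarks in Remark~\ref{simple} and Remark~\ref{closed_alg}. There is no substantial obstacle here; the statement is essentially the meeting point of two already-established facts (the exponential characterization of the Lie algebra of a closed subgroup of $\Bg^\times$, and the exponential characterization of ideal membership).

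First I would establish the desired alternative description of $\gg_{\Ig}$. Recall that by Definition~\ref{algebraic_restr}, $\gg_{\Ig} = \gg \cap \Ig$ where $\gg = \{x \in \Bg \mid \exp(tx) \in G \text{ for all } t \in \RR\}$ (by Definition~\ref{closed}, which applies because $G$ is closed in $\Bg^\times$ as an algebraic group). For $x \in \gg_{\Ig}$, we have $\exp(tx) \in G$ for every $t \in \RR$ because $x \in \gg$, and Lemma~\ref{exp_id} applied to $x \in \Ig$ yields $\exp(tx) \in (\1+\Ig) \cap \Bg^\times$ for every $t \in \RR$; intersecting these two conditions gives $\exp(tx) \in G \cap (\1+\Ig) = G_{\Ig}$. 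Conversely, if $x \in \Bg$ satisfies $\exp(tx) \in G_{\Ig}$ for every $t \in \RR$, then in particular $\exp(tx) \in G$, so $x \in \gg$, and $\exp(tx) \in (\1+\Ig) \cap \Bg^\times$, so by the other direction of Lemma~\ref{exp_id} one has $x \in \Ig$. Therefore $x \in \gg \cap \Ig = \gg_{\Ig}$.

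Next I would verify that $\gg_{\Ig}$ is a Lie subalgebra of $\Ig$. Remark~\ref{closed_alg} shows that $\gg$ is a (closed) real Lie subalgebra of $\Bg$ under the commutator bracket $[x,y] = xy - yx$, and Remark~\ref{simple}\eqref{simple_item2} notes that the one-sided ideal $\Ig$ is itself closed under this bracket (and under the real linear structure) because $\Ig$ is an associative subalgebra of $\Bg$. Consequently the intersection $\gg_{\Ig} = \gg \cap \Ig$ is a real linear subspace of $\Ig$ closed under the Lie bracket, hence a Lie subalgebra of $\Ig$. This completes the proof.

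The only step requiring care is invoking Lemma~\ref{exp_id} in both directions, but that lemma has already been established in full generality for arbitrary one-sided ideals, so no extra hypothesis on $\Ig$ (in particular, no closedness) is needed.
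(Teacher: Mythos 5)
Your proof is correct and follows essentially the same route as the paper's: both parts reduce to Definition~\ref{algebraic_restr} together with Remark~\ref{closed_alg} (for the Lie subalgebra claim) and Lemma~\ref{exp_id} in both directions (for the exponential description). You have merely written out the two inclusions that the paper's one-line appeal to Lemma~\ref{exp_id} leaves implicit, so there is nothing to correct.
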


\begin{proof}
Recall from Definition~\ref{algebraic_restr} that there is an algebraic group $G$ in $\Bg$
with the Lie algebra $\gg$ such that $G_{\Ig}=G\cap(\1+\Ig)$ and $\gg_{\Ig}=\gg\cap\Ig$.
Since $\gg$ is a Lie subalgebra of $\Bg$ by Remark~\ref{closed_alg},
it then follows that $\gg_{\Ig}$ is a Lie subalgebra of $\Ig$
(see also Remark~\ref{simple}\eqref{simple_item3}).
Moreover, the description of $\gg_{\Ig}$ follows from Lemma~\ref{exp_id}.
\end{proof}

%\newpage

%\section{Supplement}

%\end{document}

%\section*{Acknowledgment}

%%%%%%%%%%%%%%%%%%%%%%%%%%%%%%
%%%%%%%%%%%%%%%%%%%%%%

\bigskip

\end{document}